\documentclass[final,hidelinks]{siamart}
\makeatletter
\AddToHook{begindocument/end}{%
  \def\refstepcounter@optarg[#1]#2{%
    \cref@old@refstepcounter{#2}%
    \cref@constructprefix{#2}{\cref@result}%
    \@ifundefined{cref@#1@alias}%
      {\def\@tempa{#1}}%
      {\def\@tempa{\csname cref@#1@alias\endcsname}}%
    \protected@edef\cref@currentlabel{%
      [\@tempa][\arabic{#2}][\cref@result]%
      \csname p@#2\endcsname\csname the#2\endcsname}}}
\makeatother
\usepackage{amssymb,amsfonts}
\newsiamremark{remark}{Remark}
\numberwithin{theorem}{section}
\numberwithin{equation}{section}
\numberwithin{figure}{section}
\numberwithin{table}{section}
\usepackage{tikz}
\usepackage{microtype}
\newcommand{\M}{{\mathcal M}}   
\newcommand{\Diag}{\mathrm{Diag}}

\begin{document}
\pagestyle{myheadings}
\markboth{X. LIU}{INVERSE SCATTERING FROM BACKSCATTERING DATA}

\title{The Interior Inverse Scattering Problem from Backscattering Data: Spectral Comparison and Uniqueness Results}
\author{Xiaodong Liu\thanks{State Key Laboratory of Mathematical Sciences, Academy of Mathematics and Systems Science, Chinese Academy of Sciences, Beijing 100190, China (\texttt{xdliu@amt.ac.cn}).}}
\date{}
\maketitle

\begin{abstract}
We study an interior inverse scattering problem for the two-dimensional
Helmholtz equation with backscattering data, where the source and receiver
coincide. Such data are common in applications, yet existing uniqueness
results are almost exclusively established for full multistatic data with separated sources and receivers. We consider two main measurement regimes.
In the first regime, data are collected on a closed curve inside the unknown domain. Exploiting the meromorphic dependence of the backscattering field
on the wavenumber, we show that the residues at the poles determine the
diagonal of the spectral projection kernel on the curve, and hence the
\emph{visible} part of the Dirichlet spectrum. This yields an unconditional
spectral comparison: the symmetric difference of the two Dirichlet spectra
is contained in the spectrum of the interior domain enclosed by the curve.
If, in addition, the common visible eigenvalues have either matching
multiplicities or matching restriction ranks, we obtain an area
estimate bounding the discrepancy of the two domain areas by the area
of this interior domain. In the second regime,
data are given on a nonempty open subset of the domain, and a single
non-resonant frequency already uniquely determines a bounded connected Lipschitz
domain. The proof combines interior real-analyticity of the
regular part of the Helmholtz Green function with its boundary blow-up.
\end{abstract}

\begin{keywords}
inverse scattering, backscattering, heat kernel, harmonic measure, Weyl asymptotics, uniqueness
\end{keywords}

\begin{AMS}
35P25, 35R30, 78A46.
\end{AMS}

\section{Introduction}
The inverse scattering problem asks whether the geometry of an unknown domain can be recovered from scattered-field data. Classical uniqueness results typically rely on much richer information, such as the full far-field pattern or the scattered field for independent source and receiver locations. Comprehensive accounts of the underlying theory and numerical methods can be found in the monographs of Colton and Kress \cite{ColtonKress2019} and Isakov \cite{Isakov2006}.
By contrast, the backscattering setting, in which source and receiver coincide, is more restrictive, and the corresponding uniqueness theory is far less developed.

Throughout, \emph{domain} denotes the unknown set. The term \emph{cavity} is used when the interior configuration is emphasized, while
\emph{obstacle} is reserved for the exterior scattering literature cited
below.

For obstacle scattering under reduced backscattering data, the existing literature focuses largely on special geometries and measurement configurations. Kress and Rundell \cite{KressRun98} proved local uniqueness near the unit disk by analyzing Fr\'echet derivatives. Using high-frequency asymptotics of the far field, Christiansen \cite{Christiansen2013} proved that a strictly convex real-analytic obstacle with Dirichlet or Neumann boundary condition is determined by generalized backscattering data in two directions. Shin \cite{Shin2016} showed that the radius of a sound-soft ball can be uniquely recovered from phaseless backscattering data. The recent work of Li, Liu and Shi \cite{LiLiuShi2026} and of Li and
Liu \cite{LiLiuNearfield} established uniqueness for convex obstacles
from multi-frequency backscattering data in the exterior problem using
high-frequency asymptotic expansions. The present paper concerns diagonal Green-function data
from interior backscattering measurements.

In the interior cavity setting, Qin and Colton \cite{QinColton2012,QinColton2012acm}
initiated the study of inverse problems with sources and measurements
placed on an interior curve, but with data sets richer than the backscattering values considered here. Qin and Liu \cite{QinLiu2015} also placed sources and receivers on an
interior curve and proved uniqueness for cavities after introducing an
artificial impedance obstacle. Hu and Liu \cite{HuLiu2014} proved uniqueness in the interior setting for disks and polygons from the scattered field generated by a single fixed source, using a reflection-principle argument tied to those special geometries. Related reciprocity-gap approaches to interior inverse scattering use Cauchy data on an interior observation curve, that is, both the trace and the normal derivative of the total field are prescribed on that curve; see, e.g., Zeng, Liu, Sun, and Xu \cite{ZengLiuSunXu2017}. Those data are much richer than the coincident diagonal values \(y\mapsto u^s(y,y,k)\) considered here.  

The present work is also related to inverse problems from local spectral and heat data. Bosi, Kurylev, and Lassas \cite{BosiKurylevLassas2022} studied Gel'fand's inverse interior spectral problem using eigenvalues together with eigenfunctions restricted to an open set, while Kurylev, Lassas, Lu, and Yamaguchi \cite{KurylevLassasLuYamaguchi2012} considered recovery from heat-kernel measurements on an open subset. However, our single-frequency backscattering data on an open set are much weaker. They do not include the full heat kernel, the restrictions of all eigenfunctions to an interior open set, or even the spectral measure. Rather, they provide only the regular part of the diagonal of the Helmholtz Green function at one non-resonant frequency. We show that this much
weaker datum already determines the domain uniquely.

We now formulate the interior problem. Let \(D\subset\mathbb R^2\) be a
bounded connected Lipschitz domain. For a point source at $y \in D$, the incident field is
\begin{equation}
u^{\mathrm{inc}}(x,y,k)=\Phi_k(x,y):=\frac{i}{4}H_0^{(1)}(k|x-y|), \quad y\in D, \quad x\neq y,
\end{equation}
where $H_0^{(1)}$ denotes the Hankel function of the first kind of order zero, $k>0$ is the wavenumber. Assuming that $k^2\notin \sigma(D)$, where $\sigma(D)$ denotes the Dirichlet spectrum of $-\Delta$ in $D$, the corresponding scattered field $u^s(\cdot,y,k)\in H^{1}(D)$ satisfies
\begin{align}
(\Delta + k^2)\, u^s &= 0 \quad \text{in } D, \label{eq:helmholtz}\\
u^s &= -\Phi_k(\cdot,y) \quad \text{on } \partial D. \label{eq:dirichlet}
\end{align}

The inverse problem is to recover information about the domain $D$ from one of the following two backscattering data sets:
\begin{itemize}
  \item {\bf multi-frequency curve measurement}
  \begin{equation}
\bigl\{u^s(y,y,k): \, y\in \M, \, k_{\min}<k<k_{\max}, \, k^2\notin \sigma(D)\bigr\},
\end{equation}
where \(\M\Subset D\) is a closed Lipschitz curve and \(0<k_{\min}<k_{\max}\) are fixed constants.
  \item {\bf single-frequency open-set measurement}
   \begin{equation}
\{u^s(y,y,k_0): \, y\in \mathcal U, \, k_0^2\notin \sigma(D)\},
\end{equation}
where $\mathcal U$ is a nonempty open subset of $D$.
\end{itemize}

In the curve-measurement setting, we exploit the meromorphic dependence
of \(u^s(y,y,k)\) on the wavenumber. This spectral
approach yields the diagonal spectral projection kernel, and hence the
visible spectral measure on the measurement curve,
naturally distinguishing \emph{visible} from \emph{invisible} eigenvalues
according to whether the diagonal residue on the curve vanishes. It identifies only diagonal spectral data on the measurement curve, so
the area estimate requires an additional spectral assumption, not
supplied by the residue argument; see Section~\ref{sec:discussion}. Let \(\Omega\) denote the bounded connected Lipschitz domain enclosed by \(\M\), so that \(\M=\partial\Omega\). If the backscattering data of two candidate domains \(D_1\) and \(D_2\) coincide on \(\M\), then
the symmetric difference of the two spectra is contained in the spectrum of the enclosed domain:
\[
\sigma(D_1)\triangle\sigma(D_2)\subset\sigma(\Omega).
\]
Under the condition that every visible eigenvalue has the same multiplicity in \(D_1\)
and \(D_2\), we have
\[
\bigl||D_1|-|D_2|\bigr|\le|\Omega|,
\]
where \(|\cdot|\) denotes area. This is the conditional area estimate in Theorem~\ref{thm:main-unique}(b1). The same estimate also holds if the
restriction operators \(R_\lambda\) defined in Section~\ref{sec:uniqueness}
have equal ranks at every common visible eigenvalue.

In the open-set setting, we take a different route and do not extract
spectral data. At a fixed non-resonant frequency the measured quantity
is the Helmholtz diagonal regular part, whose interior analyticity and
boundary blow-up uniquely determine the domain. Thus
Theorem~\ref{thm:openset-single-frequency} requires only one frequency.
The argument consists of three steps: the measurement identity
converts coincident interior backscattering into the diagonal regular
part of the Dirichlet Green function; this diagonal quantity is
real-analytic in the interior and diverges at the boundary; and its
equality on a nonempty open subset forces equality of the domains. The boundary blow-up itself is classical in potential theory for diagonal Green
functions, Robin functions, and harmonic radii
\cite{DurenSchiffer1991,BandleFlucher1996,Flucher1999,GustafssonSakai1994,Sakai1982}. What is new is the inverse conclusion under the present measurement model: coincident interior backscattering identifies
the Helmholtz diagonal regular part on the observation set, and this leads to
a Robin-type inverse-domain theorem for bounded Lipschitz domains at nonzero
frequency. We are not aware of a domain-uniqueness result under precisely
this backscattering measurement model.

The remainder of this paper is organized as follows. Section~\ref{sec:spectral} presents the spectral tools used throughout: properties of the Dirichlet heat kernel, the meromorphic structure of the Green function with its residue formula, and the residue formula for diagonal spectral data. Section~\ref{sec:uniqueness} contains the main results. We prove the spectral comparison theorem, the area estimates under the
two alternative sufficient conditions, and the single-frequency
uniqueness theorem from open-set measurements. Section~\ref{sec:discussion} addresses the obstruction due to multiplicities, its relation to inverse boundary spectral problems, and the open problems left by this work. Concluding remarks are given in Section~\ref{sec:conclusion}.

\section{Preliminaries and Spectral Representation} \label{sec:spectral}

\subsection{Preliminaries}

We begin by fixing notation. Let \(D\subset\mathbb R^2\) be a bounded connected Lipschitz domain. We write $\Diag_{x=y}:=\{(x,y)\in D\times D:x=y\}$ for the diagonal of
$D\times D$. Let
\(A:=-\Delta\) denote the Dirichlet Laplacian on \(L^2(D)\), with domain
\[
\mathcal{D}(A) = \{u\in H^1_0(D) : \Delta u\in L^2(D)\},
\]
where the Laplacian is taken in the distributional sense. Then \(A\)
is self-adjoint, positive definite, and has compact resolvent, so its
spectrum is purely discrete \cite{Davies1989}. We write $\sigma(D)$ for the spectrum of $A$, that is, for the set of Dirichlet eigenvalues. We also set $\mathcal D(A^0):=L^2(D)$ and, for $m\ge1$, let $\mathcal D(A^m)$ denote the
domain of $A^m$, i.e. $\{u\in\mathcal D(A^{m-1}):A^{m-1}u\in\mathcal D(A)\}$.

Because the Dirichlet Laplacian has real coefficients and is self-adjoint, the eigenvalues  $\lambda\in\sigma(D)$ are positive and the corresponding eigenfunctions may be chosen to be \emph{real-valued}. For each $\lambda\in\sigma(D)$, let $m_\lambda$ denote its multiplicity, and let $\{\phi_{\lambda,j}\}_{j=1}^{m_\lambda}$ be an orthonormal basis of the eigenspace $\ker(A-\lambda)$. Thus each eigenfunction satisfies
\begin{equation*}
-\Delta \phi_{\lambda,j} = \lambda \phi_{\lambda,j} \quad \text{in } D, \quad \phi_{\lambda,j}|_{\partial D} = 0, \quad \|\phi_{\lambda,j}\|_{L^2(D)} = 1.
\end{equation*}

For each $\lambda\in\sigma(D)$, let $P_\lambda$ denote the orthogonal projection onto $\ker(A-\lambda)$. Its Schwartz kernel, denoted by the same symbol, is
\[
P_\lambda(x,y)=\sum_{j=1}^{m_\lambda}\phi_{\lambda,j}(x)\,\phi_{\lambda,j}(y),\quad x,y\in D,
\]
where $\{\phi_{\lambda,1},\dots,\phi_{\lambda,m_\lambda}\}$ is any orthonormal basis of $\ker(A-\lambda)$. For $y \in D$, we define the spectral measure
\begin{equation}\label{eq:spectral-measure}
\mu_y := \sum_{\lambda\in\sigma(D)} P_\lambda(y,y)\,\delta_\lambda,
\end{equation}
where $\delta_\lambda$ denotes the Dirac measure at $\lambda$.

For \(t>0\), the Dirichlet heat semigroup generated by \(A\) is denoted by \(e^{-tA}\) with its integral kernel \(K_D(t,x,y)\) defined as
\[
(e^{-tA}f)(x)=\int_D K_D(t,x,y)f(y)\,dy,
\quad f\in L^2(D).
\]
We call \(K_D\) the Dirichlet heat kernel of \(D\). The Dirichlet heat kernel has several well-known regularity and mapping properties that we collect here for later reference.

\begin{lemma}[Dirichlet heat kernel: bounds and vanishing]
\label{lem:heat-kernel-boundary}
Let
\begin{equation}\label{eq:Cpartial}
C_\partial(\overline D):=\{u\in C(\overline D):u|_{\partial D}=0\}.
\end{equation}
Then the following hold.
\begin{enumerate}
\item[(i)] There exist constants $C_0,b>0$, depending only on $D$, such that
\begin{equation}\label{eq:gaussian-upper-bound}
0\le K_D(t,x,y)\le \frac{C_0}{t}\,e^{-b|x-y|^2/t},
\quad t>0,\quad x,y\in D.
\end{equation}
\item[(ii)] There exist constants $C_1,C_2>0$, depending only on $D$, such that
for every $f\in L^2(D)$
\begin{equation}\label{eq:small-t-estimate}
\|e^{-tA}f\|_{L^\infty(D)}\le C_1\,t^{-1/2}\|f\|_{L^2(D)},
\quad 0<t\le 1,
\end{equation}
\begin{equation}\label{eq:large-t-estimate}
\|e^{-tA}f\|_{L^\infty(D)}\le C_2\,e^{-\lambda_1 t}\|f\|_{L^2(D)},
\quad t>1,
\end{equation}
where \(\lambda_1:=\min\sigma(D)\).
\item[(iii)] For every $t>0$, $e^{-tA}$ maps $L^2(D)$ boundedly into
$C_\partial(\overline D)$. The Dirichlet heat kernel admits a continuous
extension $K_D(t,\cdot,\cdot)\in C(\overline D\times \overline D)$ and
\begin{equation}\label{eq:kernel-vanishing}
K_D(t,x,y)=0\quad\text{whenever }x\in\partial D\ \text{or}\ y\in\partial D.
\end{equation}
\item[(iv)] For every \(y\in D\), the map
\((t,x)\mapsto K_D(t,x,y)\) is continuous on
\((0,\infty)\times\overline D\); by symmetry the same holds with the
roles of \(x\) and \(y\) interchanged.
\end{enumerate}
\end{lemma}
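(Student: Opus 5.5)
The plan is to derive everything from two standard facts: domain monotonicity of Dirichlet heat kernels, and the spectral expansion
\[
K_D(t,x,y)=\sum_{\lambda}e^{-\lambda t}P_\lambda(x,y),
\]
combined with boundary regularity of Lipschitz domains.

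\emph{Step 1: part (i).} Positivity $K_D\ge0$ follows from the positivity-preserving property of $e^{-tA}$. This is the Beurling--Deny criterion: $u\in H^1_0(D)$ implies $|u|\in H^1_0(D)$ with the same Dirichlet energy. The Gaussian upper bound comes from the parabolic maximum principle (domain monotonicity). Extending by zero, $0\le K_D(t,x,y)\le K_{\mathbb R^2}(t,x,y)=(4\pi t)^{-1}e^{-|x-y|^2/(4t)}$. So (i) holds with $C_0=1/(4\pi)$ and $b=1/4$, which are in fact independent of $D$.

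\emph{Step 2: part (ii).} For $0<t\le1$, write $(e^{-tA}f)(x)=\int_D K_D(t,x,y)f(y)\,dy$ and apply Cauchy--Schwarz together with (i):
\[
|e^{-tA}f(x)|\le \|f\|_{L^2}\Bigl(\int_{\mathbb R^2}K_{\mathbb R^2}(t,x,y)^2\,dy\Bigr)^{1/2}=\|f\|_{L^2}\,(8\pi t)^{-1/2}.
\]
For $t>1$, factor $e^{-tA}=e^{-A/2}e^{-(t-1/2)A}$. Then use the $L^2\to L^\infty$ bound at time $1/2$ and the spectral bound $\|e^{-(t-1/2)A}\|_{L^2\to L^2}=e^{-\lambda_1(t-1/2)}$. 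This gives $C_2=C_1\sqrt2\,e^{\lambda_1/2}$.

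\emph{Step 3: part (iii), the main obstacle.} We need $e^{-tA}f\in C(\overline D)$ with zero boundary values; interior continuity is standard parabolic regularity. Write $u=e^{-tA}f=e^{-tA/2}g$ with $g=e^{-tA/2}f\in L^\infty(D)$ by (ii). Bounded Lipschitz domains satisfy an exterior cone condition, hence every boundary point is Wiener regular. I would then compare $|u|$ with a barrier: $|u(s,\cdot)|\le\|g\|_\infty\,w(s,\cdot)$, where $w(s,x)=1-\mathbb P_x(\tau_D>s)$-type complement, i.e.\ $w=e^{-sA}1$, the probability that Brownian motion from $x$ stays in $D$ up to time $s$. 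Regularity of boundary points yields $e^{-sA}1(x)\to0$ as $x\to\partial D$, uniformly in the relevant way. Alternatively one can invoke Hölder continuity up to the boundary via De Giorgi--Nash--Moser together with the uniform capacity-density condition of Lipschitz domains.

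This gives $u\in C_\partial(\overline D)$. Boundedness of the map $L^2(D)\to C_\partial(\overline D)$ is then exactly (ii).

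\emph{Step 4: the kernel and part (iv).} For the kernel, use the Chapman--Kolmogorov identity
\[
K_D(t,x,y)=\bigl\langle K_D(t/2,x,\cdot),\,K_D(t/2,\cdot,y)\bigr\rangle_{L^2(D)}.
\]
By (i), $x\mapsto K_D(t/2,x,\cdot)\in L^2(D)$ is bounded. Continuity of this map into $L^2$ follows from the above applied to $e^{-tA/4}$. Hence $K_D(t,x,y)$ is the pairing of two continuous $L^2$-valued maps. Each factor vanishes for $x\in\partial D$ (respectively $y\in\partial D$) by (iii), which gives joint continuity on $\overline D\times\overline D$ and \eqref{eq:kernel-vanishing}.

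Part (iv) follows similarly, because $K_D(t,\cdot,y)=e^{-(t-s)A}K_D(s,\cdot,y)$ for $s<t$, and the semigroup is strongly continuous. Continuity in $t$ in the $C(\overline D)$ norm then comes from the bound in (ii) applied to $(e^{-hA}-I)e^{-sA}K_D(s,\cdot,y)$ in $L^2$. Symmetry of the kernel gives the statement with $x$ and $y$ interchanged.
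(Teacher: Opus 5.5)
Your parts (i), (ii) and (iv) follow the paper: domain monotonicity against the free Gaussian, Cauchy--Schwarz, and a factorization of $e^{-tA}$. Your split $e^{-A/2}e^{-(t-1/2)A}$ only changes the constant to $C_2=\sqrt2\,C_1e^{\lambda_1/2}$. Part (iii) takes a genuinely different route. The paper imports two results of Arendt--ter Elst:
\begin{itemize}
\item the elliptic inclusion $\{u\in\mathcal D(A):Au\in L^\infty(D)\}\subset C_\partial(\overline D)$ on Wiener-regular sets, applied to $u=e^{-tA}f$ after checking $Ae^{-tA}f\in L^\infty$;
\item their abstract theorem that if a self-adjoint $T$ maps $L^2(D)$ into $C(\overline D)$, then $T^3$ has a kernel in $C(\overline D\times\overline D)$.
\end{itemize}
You instead use positivity to reduce boundary vanishing to the single barrier $|e^{-tA}f|\le\|e^{-tA/2}f\|_\infty\,e^{-(t/2)A}1$, and you build the continuous kernel by hand from Chapman--Kolmogorov. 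Your route is more self-contained and shows the mechanism, but it needs a parabolic or probabilistic boundary fact for $e^{-sA}1$. The paper's route is shorter and needs only Wiener regularity, at the cost of black-boxing both steps.

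Two steps must be made explicit for your version to be complete.

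First, the claim that $e^{-sA}1(x)\to0$ at Wiener-regular boundary points is the crux of (iii), and you only assert it. Your description of $w$ as ``$1-\mathbb P_x(\tau_D>s)$'' also contradicts the correct identification $w=e^{-sA}1=\mathbb P_x(\tau_D>s)$ that follows. You can close the step in either of two ways:
\begin{itemize}
\item \emph{Probabilistically:} identify the form semigroup with killed Brownian motion, then use upper semicontinuity of $x\mapsto\mathbb P_x(\tau_D>s)$ together with $\mathbb P_p(\tau_D>s)=0$ at regular $p$.
\item \emph{Analytically:} since $e^{-hA}1\le1$ and $e^{-rA}$ is positive, $r\mapsto e^{-rA}1$ is pointwise nonincreasing. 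Hence $s\,e^{-sA}1\le\int_0^\infty e^{-rA}1\,dr=A^{-1}1$, and the torsion function $A^{-1}1$ vanishes continuously at regular boundary points by classical Perron theory.
\end{itemize}

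Second, in Step~4 the phrase ``continuity into $L^2$ follows from the above applied to $e^{-tA/4}$'' hides the key step. Let $k_x\in L^2(D)$ represent $f\mapsto(e^{-tA/4}f)(x)$ for $x\in\overline D$. Part (iii) only gives \emph{weak} continuity of the bounded map $x\mapsto k_x$. To get norm continuity of $x\mapsto K_D(t/2,x,\cdot)=e^{-tA/4}k_x$, you need $e^{-tA/4}$ to be compact on $L^2(D)$. This holds: it is Hilbert--Schmidt by (i) because $D$ is bounded, or you can use the compact resolvent. Compact operators map weakly convergent sequences to norm-convergent ones, which gives the norm continuity. Once this is stated, the pairing $\langle K_D(t/2,x,\cdot),K_D(t/2,\cdot,y)\rangle$ is jointly continuous on $\overline D\times\overline D$ and vanishes when $x$ or $y$ lies on $\partial D$, and your argument goes through.
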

\begin{proof}
We prove (i)--(iv) in turn.

(i). Since $D\subset\mathbb R^2$, domain
monotonicity for the Dirichlet heat kernel gives \cite{vandenBerg1989}
\[
  0\le K_D(t,x,y)\le K_{\mathbb R^2}(t,x,y),
\]
where
\[
  K_{\mathbb R^2}(t,x,y)=(4\pi t)^{-1}e^{-|x-y|^2/(4t)}.
\]
Hence
\eqref{eq:gaussian-upper-bound} holds with $C_0=(4\pi)^{-1}$ and $b=1/4$.

(ii). By Cauchy--Schwarz and
\eqref{eq:gaussian-upper-bound}, for $0<t\le1$ and $f\in L^2(D)$,
\[
|e^{-tA}f(x)|\le\|K_D(t,x,\cdot)\|_{L^2(D)}\|f\|_{L^2(D)}
\le C_1 t^{-1/2}\|f\|_{L^2(D)},
\quad C_1:=C_0\sqrt{\frac{\pi}{2b}},
\]
which is \eqref{eq:small-t-estimate}. For $t>1$, writing
$e^{-tA}=e^{-A}e^{-(t-1)A}$ and applying \eqref{eq:small-t-estimate} at $t=1$
to the vector $e^{-(t-1)A}f$ yield
\[
\begin{aligned}
\|e^{-tA}f\|_{L^\infty(D)}
&=\|e^{-A}e^{-(t-1)A}f\|_{L^\infty(D)}
\le C_1\|e^{-(t-1)A}f\|_{L^2(D)}\\
&\le C_1e^{-\lambda_1(t-1)}\|f\|_{L^2(D)},
\end{aligned}
\]
which is \eqref{eq:large-t-estimate} with $C_2=C_1e^{\lambda_1}$. Here, in the last inequality, we have used the spectral theorem.

(iii). Since $D$ is a bounded Lipschitz domain, every boundary point of $D$ is
Wiener regular (see, e.g., \cite{Kenig1994}). Then Example~6.2 of Arendt--ter Elst \cite{ArendtElst2019} establishes the inclusion
\begin{equation}\label{eq:ATE-inclusion}
\{u\in \mathcal{D}(A):\ Au\in L^\infty(D)\}\subset C_\partial(\overline D).
\end{equation}
Now fix $t>0$ and $f\in L^2(D)$. Since $A$ commutes with $e^{-tA}$ and
$\|Ae^{-sA}\|_{\mathcal L(L^2(D))}\le(es)^{-1}$ (by the spectral theorem,
$\|Ae^{-sA}\|=\sup_{\lambda\in\sigma(D)}\lambda e^{-s\lambda}$, and
$\sup_{\lambda>0}\lambda e^{-s\lambda}=(es)^{-1}$, attained at
$\lambda=1/s$),
\[
Ae^{-tA}f=e^{-tA/2}\bigl(Ae^{-tA/2}f\bigr),
\quad
\|Ae^{-tA/2}f\|_{L^2(D)}\le\frac{2}{et}\|f\|_{L^2(D)}.
\]
By (ii),  $e^{-tA/2}$ maps $L^2(D)$ into $L^\infty(D)$; hence
$Ae^{-tA}f\in L^\infty(D)$, while $e^{-tA}f\in \mathcal{D}(A)$. Therefore
$e^{-tA}f\in C_\partial(\overline D)$ by \eqref{eq:ATE-inclusion}.

Thus \(e^{-tA}\) maps \(L^2(D)\) into \(C_\partial(\overline D)\).
The same holds for the bounded self-adjoint operator
\(T:=e^{-tA/3}\). Applying Theorem~1.1 of
\cite{ArendtElst2019} to the self-adjoint operator
$T$ on the bounded open set $D$, we deduce that
$T^3=e^{-tA}$ has an integral kernel in $C(\overline D\times\overline D)$,
namely $K_D(t,\cdot,\cdot)\in C(\overline D\times\overline D)$.

Fix $y\in D$ and $0<s<t$. By the semigroup property of the kernel,
\[
  K_D(t,x,y)=(e^{-(t-s)A}g)(x)
\]
for almost every $x\in D$ with
$g:=K_D(s,\cdot,y)$, and $g\in L^\infty(D)\subset L^2(D)$ by
\eqref{eq:gaussian-upper-bound}. By the mapping property just proved, the right-hand side lies in
$C_\partial(\overline D)$; since it agrees with $K_D(t,\cdot,y)$ almost
everywhere in $D$ and both functions are continuous on $\overline D$, we
conclude that $K_D(t,x,y)=0$ for $x\in\partial D$, $y\in D$. The boundary vanishing identity \eqref{eq:kernel-vanishing} then follows by the symmetry and continuity of the Dirichlet heat kernel.

(iv). Fix \(y\in D\) and \(t_0>0\), and choose \(0<s<t_0\).
By (iii), \(K_D(t,\cdot,y)=e^{-(t-s)A}g\) on \(\overline D\) for all
\(t>s\), where \(g:=K_D(s,\cdot,y)\in L^\infty(D)\). Continuity of
\(t\mapsto K_D(t,\cdot,y)\) at \(t_0\) is therefore equivalent to
continuity of \(\tau\mapsto e^{-\tau A}g\) at \(\tau_0:=t_0-s\). For
\(h>0\),
\[
e^{-(\tau_0+h)A}-e^{-\tau_0 A}
=e^{-\tau_0 A/2}\bigl(e^{-hA}-I\bigr)e^{-\tau_0 A/2},
\]
while for \(0<h<\tau_0\),
\[
e^{-\tau_0 A}-e^{-(\tau_0-h)A}
=e^{-(\tau_0-h)A/2}\bigl(e^{-hA}-I\bigr)e^{-(\tau_0-h)A/2}.
\]
By (ii), \(e^{-\tau A}:L^2(D)\to L^\infty(D)\) remains bounded for
\(\tau\) near \(\tau_0/2\). Combined with strong continuity of
\(e^{-tA}\) on \(L^2(D)\), both displays, applied to \(g\), tend to
zero in \(L^\infty(D)\) as \(h\to0^+\). Since
\(e^{-\tau A}g\in C_\partial(\overline D)\) by (iii), this is continuity
in \(C_\partial(\overline D)\). Evaluating at \(x\in\overline D\) yields
continuity of \((t,x)\mapsto K_D(t,x,y)\) on
\((0,\infty)\times\overline D\). The case of fixed \(x\) follows by
symmetry of the kernel.
\end{proof}

\begin{theorem}[Boundary vanishing of spectral projection diagonals] \label{thm:boundary-vanishing}
The diagonal value of the spectral projection kernel
    \[
    P_\lambda(y,y) = \sum_{j=1}^{m_\lambda} \phi_{\lambda,j}(y)^2
    \]
    is real-analytic in $D$, extends continuously to $\overline{D}$, and vanishes on $\partial D$. Moreover, every Dirichlet eigenfunction of $D$ with eigenvalue $\lambda$ extends continuously to $\overline{D}$ and vanishes on $\partial D$.
\end{theorem}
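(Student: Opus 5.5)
The plan is to reduce everything to the mapping property of the heat semigroup in Lemma~\ref{lem:heat-kernel-boundary}(iii). Fix $\lambda\in\sigma(D)$ and an eigenfunction $\phi\in\ker(A-\lambda)$ with $\|\phi\|_{L^2(D)}=1$. By the spectral theorem, $e^{-tA}\phi=e^{-t\lambda}\phi$ for every $t>0$. Hence $\phi=e^{t\lambda}e^{-tA}\phi$ holds as an identity in $L^2(D)$. By Lemma~\ref{lem:heat-kernel-boundary}(iii) the right-hand side belongs to $C_\partial(\overline D)$. So $\phi$ has a representative that extends continuously to $\overline D$ and vanishes on $\partial D$. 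This proves the final assertion of the theorem.

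For interior real-analyticity, I would use interior elliptic regularity for $-\Delta\phi=\lambda\phi$. First, $\phi\in H^1_0(D)$ is a weak solution of a constant-coefficient elliptic equation, so Weyl's lemma (or hypoellipticity) gives $\phi\in C^\infty(D)$. Second, $\Delta+\lambda$ is elliptic with analytic (constant) coefficients, so $\phi$ is real-analytic in $D$; this is the classical analytic hypoellipticity theorem for such operators. Alternatively, one can use local expansions in Bessel functions, $J_n(\sqrt\lambda r)e^{in\theta}$, on discs compactly contained in $D$. The real-analytic representative must agree with the continuous one from the previous step, since the two coincide almost everywhere and both are continuous in $D$.

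The statement about $P_\lambda(y,y)$ then follows. Choose a real orthonormal basis $\{\phi_{\lambda,j}\}_{j=1}^{m_\lambda}$; this is possible since $A$ has real coefficients. The diagonal
\[
P_\lambda(y,y)=\sum_{j=1}^{m_\lambda}\phi_{\lambda,j}(y)^2
\]
is a finite sum of squares of functions. Each of these functions is real-analytic in $D$, continuous on $\overline D$, and zero on $\partial D$, so the sum has the same three properties.

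Independence of the basis also needs a check. For two real orthonormal bases related by an orthogonal matrix $Q$, $\sum_j\phi_j(y)^2$ is unchanged, because $Q^{\mathsf T}Q=I$. If complex bases are allowed, the kernel should be read as $\sum_j|\phi_j|^2$, which is unitarily invariant and equals the real sum. So the continuous extension is well defined. The only delicate point is the identification of the $L^2$ class of $\phi$ with a continuous function up to the boundary. This is entirely supplied by Lemma~\ref{lem:heat-kernel-boundary}(iii), which rests on Wiener regularity of Lipschitz boundaries, so I expect no genuine obstacle beyond citing it correctly.
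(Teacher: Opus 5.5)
Your proof is correct and follows the paper's argument essentially verbatim: interior real-analyticity comes from analytic elliptic regularity for $-\Delta\phi=\lambda\phi$, and continuity up to $\partial D$ with boundary vanishing comes from $\phi=e^{\lambda t}e^{-tA}\phi$ together with Lemma~\ref{lem:heat-kernel-boundary}(iii), after which the sum-of-squares structure passes all three properties to $P_\lambda(y,y)$. Your extra remarks on identifying the analytic and continuous representatives and on basis independence are correct refinements that the paper leaves implicit.
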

\begin{proof}
Each eigenfunction $\phi_{\lambda,j}$ is real-analytic in $D$ by interior analyticity for elliptic equations with analytic coefficients; see, for example, Morrey and Nirenberg \cite{MorreyNirenberg1957} or Morrey \cite{Morrey1966}. Since $P_\lambda(y,y)$ is a finite sum of squares of real-analytic functions, it is real-analytic in $D$.

It remains to show that $P_\lambda(y,y)$ is continuous on $\overline D$ and vanishes on $\partial D$. Fix an eigenfunction $\phi=\phi_{\lambda,j}\in H^1_0(D)$ satisfying $-\Delta\phi = \lambda\phi$ in $D$. It suffices to show that $\phi\in C(\overline D)$ and $\phi|_{\partial D}=0$, which we do via the Dirichlet heat semigroup $e^{-tA}$.

Since $\phi$ is an eigenfunction, $e^{-tA}\phi = e^{-\lambda t}\phi$ for every $t>0$. On the other hand, Lemma~\ref{lem:heat-kernel-boundary}(iii) shows that for each fixed $t>0$ the Dirichlet heat semigroup maps $L^2(D)$ into $C_\partial(\overline D)$, that is, $e^{-tA}f$ extends continuously to $\overline D$ and vanishes on $\partial D$ for every $f\in L^2(D)$. Hence
\[
e^{-tA}\phi \in C(\overline D)
\quad\text{and}\quad
(e^{-tA}\phi)|_{\partial D}=0.
\]
Multiplying by the nonzero scalar $e^{\lambda t}$ gives $\phi\in C(\overline D)$ and $\phi|_{\partial D}=0$.
The diagonal value $P_\lambda(y,y)=\sum_{j=1}^{m_\lambda}\phi_{\lambda,j}(y)^2$ inherits both the continuity on $\overline{D}$ and the pointwise vanishing on $\partial D$.
\end{proof}

The Gaussian bound \eqref{eq:gaussian-upper-bound} gives
\(K_D(t/2,y,\cdot)\in L^2(D)\) for fixed \(y\in D\) and
\(t>0\). Let \(\{(\lambda_n,\phi_n)\}_{n\ge1}\) be a real
orthonormal Dirichlet eigenbasis of \(D\), the eigenvalues being listed with
multiplicity. For every \(n\),
\[
\int_DK_D(t/2,y,z)\phi_n(z)\,dz=(e^{-tA/2}\phi_n)(y)
=e^{-t\lambda_n/2}\phi_n(y).
\]
This identity holds at \emph{every} \(y\in D\) because both sides are
continuous in \(y\), the left-hand side by
Lemma~\ref{lem:heat-kernel-boundary}(iii) and dominated convergence, and the
right-hand side because \(\phi_n\in C_\partial(\overline{D})\) by
Theorem~\ref{thm:boundary-vanishing}. Parseval's identity therefore gives
\[
\|K_D(t/2,y,\cdot)\|_{L^2(D)}^2
=\sum_{n\ge1}e^{-t\lambda_n}|\phi_n(y)|^2,
\]
the series converging absolutely. By the symmetry of the heat kernel and the
semigroup property, the left-hand side equals \(K_D(t,y,y)\):
\begin{equation} \label{eq:heat-diagonal-laplace0}
\|K_D(t/2,y,\cdot)\|_{L^2(D)}^2
=\int_DK_D(t/2,y,z)K_D(t/2,z,y)\,dz=K_D(t,y,y),
\end{equation}
where the first equality uses the symmetry of the kernel and the second the
semigroup property \(e^{-tA}=e^{-tA/2}e^{-tA/2}\). Grouping the terms of the
series by eigenvalue yields
\begin{equation} \label{eq:heat-diagonal-laplace}
K_D(t,y,y)=\sum_{\lambda\in\sigma(D)}e^{-t\lambda}P_\lambda(y,y)
=\int_0^\infty e^{-t\lambda}\,d\mu_y(\lambda),\quad t>0,
\end{equation}
the last identity being the definition \eqref{eq:spectral-measure} of \(\mu_y\).

\begin{lemma}[Strict positivity of the Dirichlet heat kernel]
\label{lem:heat-positivity}
For $t>0$, the Dirichlet heat kernel $K_D(t,\cdot, \cdot)$ is positive in $D\times D$, i.e.,
\[
K_D(t,x,y)>0,\quad t>0,\quad x,y\in D.
\]
\end{lemma}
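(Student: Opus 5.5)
We prove strict positivity by comparing with the heat kernel of a small disk, using domain monotonicity to get positivity near the diagonal and the semigroup property to propagate it along chains of overlapping disks. This works because $D$ is connected, and the semigroup property lets us concatenate local positivity. The tools are domain monotonicity (already used in the proof of Lemma~\ref{lem:heat-kernel-boundary}(i), \cite{vandenBerg1989}) and the semigroup identity
\[
K_D(t+s,x,y)=\int_D K_D(t,x,z)K_D(s,z,y)\,dz,
\]
together with the nonnegativity and continuity from Lemma~\ref{lem:heat-kernel-boundary}(i),(iv).

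\emph{Step 1: local positivity.} Let $B=B(x_0,r)\Subset D$ be a disk. Domain monotonicity gives $K_D(t,x,y)\ge K_B(t,x,y)$ for $x,y\in B$. For the disk, $K_B(t,x,y)>0$ for all $x,y\in B$ and $t>0$. To see this, note that the eigenfunction expansion $K_B(t,x,x)=\sum_n e^{-t\lambda_n}\phi_n(x)^2>0$ holds because the first eigenfunction of a disk is the explicit positive Bessel function $J_0(j_{0,1}|x-x_0|/r)$. Alternatively, one can use the standard strong maximum principle for the heat equation on $B$. I would take the latter as classical for the smooth domain $B$, as a citable fact. Either way, $K_D(t,x,y)>0$ whenever $x,y$ lie in a common disk compactly contained in $D$.

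\emph{Step 2: propagation.} Fix $x,y\in D$ and $t>0$. Since $D$ is open and connected, it is path connected. Choose a polygonal path from $x$ to $y$ and cover it by finitely many disks $B_0,\dots,B_N\Subset D$ with $x\in B_0$, $y\in B_N$ and $B_{j-1}\cap B_j\neq\emptyset$. Pick points $z_j\in B_{j-1}\cap B_j$ for $j=1,\dots,N$, and put $z_0=x$, $z_{N+1}=y$. Then consecutive points $z_j,z_{j+1}$ lie in the common disk $B_j$. Set $\tau=t/(N+1)$. Iterating the semigroup identity gives
\[
K_D(t,x,y)=\int_{D^N}\prod_{j=0}^{N}K_D(\tau,w_j,w_{j+1})\,dw_1\cdots dw_N,
\qquad w_0=x,\ w_{N+1}=y.
\]
The integrand is nonnegative by Lemma~\ref{lem:heat-kernel-boundary}(i). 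By Step~1 it is strictly positive at $(w_1,\dots,w_N)=(z_1,\dots,z_N)$. Continuity then gives positivity on a neighborhood of that point: each factor $K_D(\tau,\cdot,\cdot)$ is continuous on $\overline D\times\overline D$ by Lemma~\ref{lem:heat-kernel-boundary}(iii), and each factor is positive on $B_j\times B_j$, which is an open set. The integral of a nonnegative function that is positive on a set of positive measure is positive, so $K_D(t,x,y)>0$.

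\emph{Main obstacle.} The main difficulty is Step~1, namely justifying positivity of $K_B$ everywhere on $B\times B$ rather than just on the diagonal. If one only has diagonal positivity, there is a fallback. By continuity, $K_B(\tau,\cdot,\cdot)$ is then positive on a neighborhood of the diagonal. Refining the chain so that consecutive points $z_j,z_{j+1}$ are very close makes the argument of Step~2 go through with only this near-diagonal positivity. The needed closeness is uniform because the chain lies in a compact subset of $D$ and $\tau$ is fixed once $N$ is chosen. To avoid circularity, I would first choose the mesh, then $N$, and then check near-diagonal positivity at the resulting $\tau$ via the explicit lower bound: for $|x-w|$ small relative to $\sqrt\tau$ and to the distance to $\partial B$, the disk kernel is comparable to the Gaussian $(4\pi\tau)^{-1}e^{-|x-w|^2/(4\tau)}$.
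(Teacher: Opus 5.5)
Your argument is correct, but it takes a different route from the paper's.

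\textbf{The paper's route.} The paper fixes $y$ and views $v=K_D(\cdot,\cdot,y)$ as a nonnegative smooth solution of the heat equation. It is continuous on $(0,\infty)\times\overline D$ and vanishes on the lateral boundary (Lemma~\ref{lem:heat-kernel-boundary}(iii)--(iv)). The paper gets $v(t,\cdot)\not\equiv0$ from $\|v(t,\cdot)\|_{L^2(D)}^2=K_D(2t,y,y)\ge e^{-2t\lambda_1}\phi_1(y)^2>0$, using a positive ground state. It then applies the strong parabolic maximum principle once, on $\overline D\times[\tau,t_0]$.

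\textbf{Your route.} You argue local-to-global: domain monotonicity $K_D\ge K_B$ on disks $B\Subset D$, positivity for the model disk, and Chapman--Kolmogorov along a chain of disks. Connectedness enters only through the path. You use nothing about $K_D$ near $\partial D$ beyond nonnegativity and interior continuity, so your proof works for arbitrary connected open sets.

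\textbf{On Step~1.} If you settle Step~1 by the strong maximum principle on $B$, you are running exactly the paper's argument on $B$. Lemma~\ref{lem:heat-kernel-boundary} already supplies the boundary continuity that argument needs on the Lipschitz domain $D$, so it applies to $D$ directly and the chaining becomes redundant. The chaining is essential only in your fallback. There the Bessel ground state gives only diagonal positivity, so ``either way'' overstates that alternative. Continuity then gives a positivity radius $\varepsilon(\tau)$ about the diagonal, but with no control as $\tau\to0$. The quantitative short-time bound is therefore a genuine extra ingredient, though an easy one to supply. For example, compare $K_{\mathbb R^2}-K_B$ with its lateral values on $\partial B$, via the weak maximum principle or the probabilistic representation. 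This gives
\[
K_B(\tau,x,w)\ge\frac{1}{4\pi\tau}\Bigl(e^{-|x-w|^2/(4\tau)}-e^{-d^2/(4\tau)}\Bigr),\qquad 0<\tau\le\frac{d^2}{4},\quad d:=\operatorname{dist}(w,\partial B).
\]
Hence positivity holds for $|x-w|<d$ uniformly in small $\tau$. Refining the chain then only helps, and there is no circularity in the order of choices.

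\textbf{What the paper's route buys.} In return for relying on the boundary regularity of $D$, it is shorter and its template is reusable. The same maximum-principle argument is applied verbatim in Lemma~\ref{lem:strict-monotonicity} to $K_{D_2}-K_{D_1}$. That difference is not a semigroup kernel, so a Chapman--Kolmogorov chaining argument would not transfer there.
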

\begin{proof}
Fix $y\in D$ and set $v(t,x):=K_D(t,x,y)$. First, $v\ge0$ on
$(0,\infty)\times D$ by Lemma~\ref{lem:heat-kernel-boundary}(i). Second,
by Lemma~\ref{lem:heat-kernel-boundary}(iii)--(iv), $v$ extends
continuously to $(0,\infty)\times\overline D$, and the extension vanishes
on the lateral surface $\partial D\times(0,\infty)$. Third, $v$ is smooth
on $(0,\infty)\times D$ and solves $(\partial_t-\Delta_x)v=0$ there: for
$0<\sigma<t$ one has $v(t,\cdot)=e^{-(t-\sigma)A}v(\sigma,\cdot)$ with
$v(\sigma,\cdot)\in L^\infty(D)\subset L^2(D)$, and iterating the
regularity argument of Lemma~\ref{lem:heat-kernel-boundary}(iii), based on
$\|A^me^{-sA}\|_{\mathcal L(L^2)}\le C_ms^{-m}$ with $C_m=(m/e)^m$, which follows from
$\sup_{\lambda>0}\lambda^me^{-s\lambda}=(m/(es))^m$, yields
$v(t,\cdot)\in\bigcap_{m\ge0}\mathcal{D}(A^m)$, whence $v\in C^\infty$ by interior
elliptic regularity and Sobolev embedding. Fourth, $v(t,\cdot)\not\equiv0$
for every $t>0$: by the symmetry of the kernel and the semigroup property,
as in \eqref{eq:heat-diagonal-laplace0},
\begin{equation}\label{eq:heat-kernel-nonvanishing}
\|v(t,\cdot)\|_{L^2(D)}^2=K_D(2t,y,y)
=\sum_{\lambda\in\sigma(D)}e^{-2t\lambda}P_\lambda(y,y)
\ge e^{-2t\lambda_1}\phi_1(y)^2>0,
\end{equation}
where $\lambda_1=\min\sigma(D)$ and $\phi_1$ is an $L^2(D)$-normalized
eigenfunction, which may be chosen positive because $D$ is connected
\cite[Section~6.5.1]{Evans2010}.

Suppose for contradiction that $v(t_0,x_0)=0$ for some $t_0>0$ and
$x_0\in D$. Fix $\tau$ with $0<\tau<t_0$ and consider the truncated
cylinder $Q_\tau:=\overline D\times[\tau,t_0]$. By the first two
properties, $v\in C(Q_\tau)$ and $v\ge0$ on $Q_\tau$; since $v(t_0,x_0)=0$,
the minimum of $v$ over $Q_\tau$ equals $0$ and is attained at
$(t_0,x_0)$, which is spatially interior and does not lie on the bottom
slice $t=\tau$. In other words, $(t_0,x_0)$ belongs to the parabolic
interior $D\times(\tau,t_0]$ of $Q_\tau$, the top time slice not being
part of the parabolic boundary. The strong maximum principle for the heat
equation \cite[Section~7.1.4]{Evans2010}, applied to $-v$ on this
cylinder, therefore implies that $v$ is constant on the backward slab
$D\times(\tau,t_0]$ and hence, by continuity, on
$\overline D\times[\tau,t_0]$; the constant equals the minimum
$v(t_0,x_0)=0$. Choosing any $t_1\in(\tau,t_0)$ we obtain $v(t_1,x)=0$
for all $x\in D$, that is, $\|v(t_1,\cdot)\|_{L^2(D)}=0$, which contradicts
\eqref{eq:heat-kernel-nonvanishing}. Since $y\in D$ was arbitrary and
$K_D$ is symmetric, the lemma follows: this is the pointwise form of the
positivity improving property of the Dirichlet heat semigroup.
\end{proof}

\subsection{Spectral Representation of the Green Function and the Backscattering Field}
Let $G_D(x,y,k)$ denote the Dirichlet Green's function, i.e., the distributional kernel of the resolvent $(A-k^2)^{-1}$:
\begin{equation}\label{eq:green-kernel-def}
[(A-k^2)^{-1}f](x)=\int_D G_D(x,y,k)f(y)\,dy,\quad k^2\notin\sigma(D).
\end{equation}
For fixed $x,y\in D$ satisfying $x\neq y$, the fundamental solution $\Phi_k(x,y)$ depends analytically on $k$ for $\Re k > 0$.
Therefore, all identities involving $\Phi_k$ below are understood on $\{\Re k>0\}$ away from the discrete set $k^2\in\sigma(D)$, while the meromorphic continuation of $G_D(x,y,\cdot)$ is supplied by the resolvent $(A-k^2)^{-1}$.

\begin{lemma}[Holomorphy of the kernels off the diagonal]
\label{lem:kernel-holomorphy}
Let $\Lambda\subset\mathbb C$ be a domain and let
$\mu\mapsto T(\mu)\in\mathcal L(L^2(D))$ be holomorphic on $\Lambda$ with
locally bounded operator norm. Let $K_T(\cdot,\cdot,\mu)\in\mathcal D'(D\times D)$
denote the distributional kernel of $T(\mu)$, and suppose that there is a
function $F\in C^\infty\bigl((D\times D)\setminus\Diag_{x=y}\bigr)$,
independent of $\mu$, such that
\begin{equation}\label{eq:kernel-holomorphy}
(-\Delta_x-\Delta_y-2\mu)K_T(\cdot,\cdot,\mu)=F
\quad\mbox{in }\mathcal D'\bigl((D\times D)\setminus\Diag_{x=y}\bigr)
\end{equation}
for every $\mu\in\Lambda$. Then $\mu\mapsto K_T(\cdot,\cdot,\mu)$ is
holomorphic with values in
$C^\infty_{\mathrm{loc}}\bigl((D\times D)\setminus\Diag_{x=y}\bigr)$. In
particular, for every fixed $x\neq y$ the map $\mu\mapsto K_T(x,y,\mu)$ is
holomorphic on $\Lambda$, and for every fixed $\mu\in\Lambda$ the function
$K_T(\cdot,\cdot,\mu)$ is smooth off the diagonal.
\end{lemma}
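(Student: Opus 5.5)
The plan is to combine weak holomorphy of the kernel map with interior elliptic regularity for the operator $L_\mu:=-\Delta_x-\Delta_y-2\mu$. This is an elliptic operator with constant coefficients on $\mathbb R^4$, and its principal part is the four-dimensional Laplacian. Write $W:=(D\times D)\setminus\Diag_{x=y}$.

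The first step is distributional holomorphy. For test functions $\varphi,\psi\in C_c^\infty(D)$, the pairing $\langle K_T(\cdot,\cdot,\mu),\varphi\otimes\psi\rangle=(T(\mu)\psi,\bar\varphi)_{L^2}$ is holomorphic in $\mu$, because $T$ is holomorphic in operator norm. The kernel theorem gives the continuity bound $|\langle K_T(\mu),\chi\rangle|\le\|T(\mu)\|\,\|\cdot\|$ on finite sums of tensors. Since the norm is locally bounded and tensors are dense in $C_c^\infty(D\times D)$, an argument of Vitali/Montel type shows that $\mu\mapsto\langle K_T(\mu),\chi\rangle$ is holomorphic for every $\chi\in C_c^\infty(W)$. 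The same bound shows that, on a compact $\mu$-set, $K_T(\mu)$ stays in a bounded subset of a fixed Sobolev space $H^{-s}_{\mathrm{loc}}(W)$. Concretely, for $\chi$ supported in a product of compacts, $|\langle K_T,\chi\rangle|\le C\|\chi\|_{L^2(D\times D)}$ after viewing $T$ as bounded on $L^2$. In fact this gives $K_T(\mu)\in L^2_{\mathrm{loc}}$ only if $T$ is Hilbert--Schmidt, so I would settle for the bound in $H^{-s}$ with $s$ large. That bound follows from the bilinear estimate via Sobolev $s>$ dimension. Weak holomorphy plus local boundedness then gives holomorphy with values in $H^{-s}_{\mathrm{loc}}(W)$.

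The second step removes the inhomogeneity. Fix $\mu_0\in\Lambda$ and set $V(\mu):=K_T(\mu)-K_T(\mu_0)$. Since $F$ does not depend on $\mu$, subtracting the two equations gives
\[
L_\mu V(\mu)=2(\mu-\mu_0)K_T(\mu_0)\quad\text{on }W.
\]
This route seems awkward, so I would instead argue directly. For $\mu\ne\mu'$ consider the difference quotient $Q:=(K_T(\mu)-K_T(\mu'))/(\mu-\mu')$. Subtracting the equations at $\mu$ and $\mu'$ gives $L_\mu Q=2K_T(\mu')$, and $F$ cancels exactly; this is where its independence of $\mu$ is used.

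The third step is elliptic bootstrapping. By interior regularity for $L_\mu$, uniformly for $\mu$ in compacts, every $u$ satisfies $\|u\|_{H^{m+2}(W')}\le C(\|L_\mu u\|_{H^m(W'')}+\|u\|_{H^{-s}(W'')})$ for nested relatively compact $W'\Subset W''\Subset W$. Applied to $K_T$ itself with right-hand side $F$ smooth, this shows $K_T(\mu)$ is bounded in every $H^m_{\mathrm{loc}}(W)$ for $\mu$ in compacts. Applied to $Q$, it shows the difference quotients are bounded in every $H^m_{\mathrm{loc}}$. Applied once more, to the difference of two difference quotients, it shows they are Cauchy as $\mu'\to\mu$. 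Sobolev embedding then gives holomorphy in $C^\infty_{\mathrm{loc}}(W)$. Pointwise holomorphy for fixed $x\ne y$ follows because point evaluation is continuous on $C^\infty_{\mathrm{loc}}$, and smoothness at fixed $\mu$ is immediate.

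I expect the main obstacle to be the first step: getting a uniform a priori bound for $K_T(\mu)$ in some negative Sobolev space on $W$ from operator-norm bounds alone. I would first try estimating $\langle K_T,\chi\rangle$ through the expansion $\chi=\sum_{\alpha,\beta}c_{\alpha\beta}e_\alpha\otimes e_\beta$ in a local Fourier basis on a product box. The coefficients of a smooth $\chi$ decay rapidly, and $|(T e_\beta,e_\alpha)|\le\|T\|$, which yields $|\langle K_T,\chi\rangle|\le C\|T(\mu)\|\,\|\chi\|_{H^{s}}$ for $s>2$.
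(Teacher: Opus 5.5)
Your proposal is correct. Its architecture matches the paper's: an a priori negative-Sobolev bound on $K_T(\mu)$ obtained from $\|T(\mu)\|$, holomorphy of the pairings against tensor test functions, and interior elliptic estimates for $L_\mu$ with constants uniform for $\mu$ in compact sets. Two of the key steps, however, are carried out differently.

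\emph{The a priori bound.} The paper expands $\chi$ only in the $y$-variable, in the Dirichlet eigenbasis of $D$. It then uses Weyl's law to get $\sum_n(1+\lambda_n)^{-2}<\infty$, which gives an $H^{-2}$ bound. You expand in a local Fourier basis on a product box, in both variables, and use $\sum_{\xi\in\mathbb Z^4}(1+|\xi|^2)^{-s}<\infty$ for $s>2$. This loses slightly more regularity, which is irrelevant after bootstrapping. In exchange it needs no spectral asymptotics and works for any bounded operator on $L^2$. To make the expansion legitimate, multiply by a cutoff $\theta\otimes\theta$ with $\theta\equiv1$ near the projections of $\operatorname{supp}\chi$. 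The pieces $(\theta e_\alpha)\otimes(\theta e_\beta)$ are then genuine test functions and the series converges in $C_c^\infty$.

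\emph{Weak to norm holomorphy.} The paper bounds $K_T(\mu)$ in $H^\tau(O')$. It then uses Hahn--Banach and reflexivity to show that test-function pairings are norm dense in $(H^\tau(O'))^\ast$, and invokes Dunford's theorem. You instead note that the difference quotients satisfy $L_\mu Q=2K_T(\mu')$. The same interior estimate then shows directly that they are bounded and Cauchy in every $H^m_{\mathrm{loc}}$. Your route is more hands-on and avoids the functional-analytic detour. It does rely on the exact cancellation of $F$, i.e.\ its $\mu$-independence. The paper's soft argument would go through for any smooth right-hand side that is merely locally bounded in $\mu$.

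\emph{A point to make explicit.} The lower-order term of the interior estimate, applied to $Q$ and to differences of difference quotients, needs $H^{-s}$ control that you leave implicit. It is supplied by your Fourier bound itself. That bound says $T\mapsto K_T$ is a bounded linear map $\mathcal L(L^2(D))\to H^{-s}(O)$ for each product box $O\Subset D\times D$. Hence operator-norm holomorphy of $T(\mu)$ transfers at once to norm holomorphy of $K_T(\mu)$ in $H^{-s}(O)$. This also makes your Vitali/Montel step unnecessary.
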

\begin{proof}
Fix separated relatively compact open sets $U_x,U_y\Subset D$ with
$\overline{U_x}\cap\overline{U_y}=\varnothing$, so that
$O_0:=U_x\times U_y$ misses the diagonal; it suffices to prove the claim
with $C^\infty_{\mathrm{loc}}(O_0)$ in place of
$C^\infty_{\mathrm{loc}}\bigl((D\times D)\setminus\Diag_{x=y}\bigr)$, since
$U_x,U_y$ are arbitrary. Throughout write $K(\cdot,\cdot,\mu):=K_T(\cdot,\cdot,\mu)$, let
$\Lambda'\Subset\Lambda$ be compact, and set
$M_{\Lambda'}:=\sup_{\mu\in\Lambda'}\|T(\mu)\|_{\mathcal L(L^2(D))}$,
which is finite since $\mu\mapsto T(\mu)$ is holomorphic.

\emph{Step~1: A $\mu$-uniform pairing bound.}
Let $\{(\lambda_n,\phi_n)\}_{n\ge1}$ be the Dirichlet eigenvalues of $A$,
repeated according to multiplicity, together with a corresponding real
orthonormal eigenbasis. For $\chi\in C_c^\infty(O_0)$, expand $\chi$ in
the eigenbasis $\{\phi_n\}$ acting in the $y$-variable,
\[
\chi(x,y)=\sum_na_n(x)\phi_n(y),
\quad a_n(x)=\int_D\chi(x,y)\phi_n(y)\,dy.
\]
Then
$\langle K(\cdot,\cdot,\mu),\chi\rangle
=\sum_n\langle T(\mu)\phi_n,a_n\rangle_{L^2(D)}$,
the series converging absolutely by the estimate below, and
$|\langle T(\mu)\phi_n,a_n\rangle|\le\|T(\mu)\|_{\mathcal L(L^2(D))}
\|a_n\|_{L^2(D)}$.
By Cauchy--Schwarz, together with $\sum_n(1+\lambda_n)^{-2}<\infty$, which
follows from Weyl's law $\lambda_n\sim 4\pi n/|D|$ as $n\to\infty$
\cite{Davies1989},
\[
\begin{aligned}
\sum_n\|a_n\|_{L^2(D)}
&\le\Bigl(\sum_n(1+\lambda_n)^{-2}\Bigr)^{1/2}
\Bigl(\sum_n(1+\lambda_n)^{2}\|a_n\|^2_{L^2(D)}\Bigr)^{1/2}\\
&=C\,\bigl\|(1+A_y)\chi\bigr\|_{L^2(D\times D)},
\end{aligned}
\]
where $A_y=-\Delta_y$ acts in the $y$-variable. Since
$\chi$ is compactly supported in $O_0\Subset D\times D$, the operator
identity $(1+A_y)\chi=(1-\Delta_y)\chi$ holds, and the last norm is
controlled by the ordinary Sobolev norm. Summarizing,
\begin{equation}\label{eq:kernel-compact-bound}
\|K(\cdot,\cdot,\mu)\|_{H^{-2}(O_0)}
\le C_{O_0}\,\|T(\mu)\|_{\mathcal L(L^2(D))}
\le C_{O_0}M_{\Lambda'},
\quad \mu\in\Lambda',
\end{equation}

\emph{Step~2: Weak holomorphy.}
If $\chi=\sum_j\alpha_j\otimes\beta_j$ is a finite sum with
$\alpha_j\in C_c^\infty(U_x)$ and $\beta_j\in C_c^\infty(U_y)$, then
\[
\langle K(\cdot,\cdot,\mu),\chi\rangle
=\sum_j\langle T(\mu)\beta_j,\alpha_j\rangle_{L^2(D)}
\]
is holomorphic in $\mu$. Such sums are dense in $C_c^\infty(O_0)$ in
the $H^2$ norm, so the bound \eqref{eq:kernel-compact-bound} shows that
$\mu\mapsto\langle K(\cdot,\cdot,\mu),\chi\rangle$, a locally uniform
limit of holomorphic functions, is holomorphic for every
$\chi\in C_c^\infty(O_0)$. Hence
$\mu\mapsto K(\cdot,\cdot,\mu)$ is weakly holomorphic in
$\mathcal D'(O_0)$.

\emph{Step~3: Elliptic improvement.}
Let $L_\mu:=-\Delta_x-\Delta_y-2\mu$. Its principal part is fixed and
$\mu$ enters only through the zeroth-order coefficient, while the
right-hand side $F$ of \eqref{eq:kernel-holomorphy} is smooth and
independent of $\mu$, so the interior elliptic estimate for $L_\mu$ holds
with constants that are uniform for $\mu\in\Lambda'$. Fix $O'\Subset O_0$ and
$\tau\ge0$, and choose nested open product domains
$O'=V_N\Subset V_{N-1}\Subset\cdots\Subset V_0=O_0$ with $2N>\tau+2$;
applying the interior estimate on $V_{k+1}\Subset V_k$ along this chain
gains two derivatives at each step, so the bound \eqref{eq:kernel-compact-bound}
improves to
\begin{equation}\label{eq:kernel-Htau-bound}
\|K(\cdot,\cdot,\mu)\|_{H^{\tau}(O')}\le C_{\tau,O'}(\Lambda')
\quad\text{for all }\mu\in\Lambda'.
\end{equation}

\emph{Step~4: From weak to norm holomorphy.}
It remains to pass from weak holomorphy to norm holomorphy in
$X:=H^{\tau}(O')$, a Hilbert space continuously and injectively embedded
in $\mathcal D'(O')$. We extend the scalar holomorphy from test functions
to all of $X^{\ast}$. For $\chi\in C_c^\infty(O')$, the pairing
$s_\chi:=\langle\,\cdot\,,\chi\rangle$ is a bounded linear functional on
$X$, with $\|s_\chi\|_{X^{\ast}}\le\|\chi\|_{H^{-\tau}(O')}$.
Let $M\subset X^{\ast}$ be the linear span of these pairings. Since
$M$ separates the points of $X$, its annihilator in $X$ is $\{0\}$,
so the weak-star closure of $M$ in $X^{\ast}$ is all of $X^{\ast}$ by
the Hahn--Banach theorem. Since $X$ is reflexive, the weak-star
topology $\sigma(X^{\ast},X)$ agrees with the weak topology
$\sigma(X^{\ast},X^{\ast\ast})$, and a linear subspace has the same
weak and norm closure. Hence $M$ is norm-dense in $X^{\ast}$.

Fix $\psi\in X^{\ast}$ and choose $\psi_n$ in this span with
$\psi_n\to\psi$ in $X^{\ast}$. Each function
$\mu\mapsto\langle K(\cdot,\cdot,\mu),\psi_n\rangle$ is holomorphic on
$\Lambda$, being a finite linear combination of the pairings above, and
\eqref{eq:kernel-Htau-bound} gives
\[
\bigl|\langle K(\cdot,\cdot,\mu),\psi-\psi_n\rangle\bigr|
\le C\,\|\psi-\psi_n\|_{X^{\ast}},
\quad \mu\in\Lambda',
\]
so $\mu\mapsto\langle K(\cdot,\cdot,\mu),\psi_n\rangle$ converges to
$\mu\mapsto\langle K(\cdot,\cdot,\mu),\psi\rangle$ locally uniformly on
$\Lambda$. Locally uniform limits of holomorphic functions are
holomorphic, so $\mu\mapsto K(\cdot,\cdot,\mu)$ is weakly holomorphic as
an $X$-valued map.

By the classical theorem of Dunford \cite{Dunford1938} (see also
\cite{HillePhillips1957}), a weakly holomorphic map from a domain in
$\mathbb C$ into a complex Banach space is holomorphic in the norm
topology. Hence $\mu\mapsto K(\cdot,\cdot,\mu)$ is holomorphic with values
in $H^\tau(O')$. For every $K\Subset O_0$ and every integer $m\ge0$, choose $O'$ with
$K\Subset O'\Subset O_0$ and $\tau>m+2$. Composing the
$H^\tau(O')$-holomorphy with the Sobolev embedding
$H^\tau(O')\hookrightarrow C^m(\overline K)$ shows that the map is
holomorphic with values in $C^m(\overline K)$. Since $K$ and $m$ are
arbitrary, the map is holomorphic with values in
$C^\infty_{\mathrm{loc}}(O_0)$, and hence all pointwise derivatives at
points of $O_0$ depend holomorphically on $\mu$. Varying the separated
sets $U_x,U_y$ yields holomorphy with values in
$C^\infty_{\mathrm{loc}}\bigl((D\times D)\setminus\Diag_{x=y}\bigr)$.
\end{proof}

\begin{theorem}[Meromorphic expansion of the resolvent kernel] \label{theo:spec-expansion}
As an operator-valued function, $(A-k^2)^{-1}$ has poles precisely at $k=\pm\sqrt{\lambda}$, $\lambda\in\sigma(D)$. For fixed $x\neq y$, the kernel $G_D(x,y,k)$ is meromorphic in $k$, with possible poles at these points, and for fixed $k$ with $k^2\notin\sigma(D)$ it is smooth off the diagonal, i.e., $G_D(\cdot,\cdot,k)\in C^\infty\bigl((D\times D)\setminus\Diag_{x=y}\bigr)$. The point $k=\sqrt{\lambda}$ is an actual pole if and only if $P_\lambda(x,y)\neq 0$, and similarly for $k=-\sqrt{\lambda}$. The residue at $k = \sqrt{\lambda}$ is
\begin{equation*}
\operatorname{Res}_{k = \sqrt{\lambda}} G_D(x, y, k) = -\frac{1}{2\sqrt{\lambda}}\, P_\lambda(x, y),
\end{equation*}
while the residue at $k=-\sqrt{\lambda}$ is
\begin{equation*}
\operatorname{Res}_{k = -\sqrt{\lambda}} G_D(x, y, k) = \frac{1}{2\sqrt{\lambda}}\, P_\lambda(x, y).
\end{equation*}
\end{theorem}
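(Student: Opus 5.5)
The plan is to work with the spectral parameter $\mu=k^2$ first and then pass to $k$. By the spectral theorem, $(A-\mu)^{-1}=\sum_{\lambda\in\sigma(D)}(\lambda-\mu)^{-1}P_\lambda$ on $L^2(D)$. This is holomorphic on $\mathbb C\setminus\sigma(D)$ with norm $\operatorname{dist}(\mu,\sigma(D))^{-1}$. Near each $\lambda_0\in\sigma(D)$ it splits as
\[
(A-\mu)^{-1}=(\lambda_0-\mu)^{-1}P_{\lambda_0}+R_{\lambda_0}(\mu),
\]
where $R_{\lambda_0}(\mu)=\sum_{\lambda\ne\lambda_0}(\lambda-\mu)^{-1}P_\lambda$ is holomorphic and locally bounded near $\lambda_0$. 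Since $P_{\lambda_0}\neq0$, the operator-valued function has a genuine simple pole at every $\lambda_0\in\sigma(D)$. Substituting $\mu=k^2$, the poles in $k$ are exactly $k=\pm\sqrt{\lambda}$, and they are simple because $\frac{d}{dk}k^2=2k\neq0$ there. This proves the first sentence.

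To obtain the kernel statements, I will apply Lemma~\ref{lem:kernel-holomorphy} to each piece. For the regular part, take $T(\mu)=R_{\lambda_0}(\mu)$ on a small disc $\Lambda$ around $\lambda_0$. Its kernel is $G_D(\cdot,\cdot,k)-(\lambda_0-\mu)^{-1}P_{\lambda_0}(\cdot,\cdot)$. Away from $\sigma(D)$, take $T(\mu)=(A-\mu)^{-1}$ itself. In both cases I need the distributional identity \eqref{eq:kernel-holomorphy} with $F=0$ off the diagonal.

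For the resolvent kernel this comes from two facts. First, $(A_x-\mu)G_D=\delta(x-y)$ in $x$, and by symmetry of the kernel ($G_D(x,y,\mu)=G_D(y,x,\mu)$ for real eigenfunctions) the same holds in $y$. Testing against $\alpha\otimes\beta$ with disjoint supports, the delta terms drop out, so $(-\Delta_x-\mu)G_D=0$ and $(-\Delta_y-\mu)G_D=0$ off the diagonal. Summing gives $L_\mu G_D=0$. For the projection kernel, $P_{\lambda_0}(x,y)=\sum_j\phi_{\lambda_0,j}(x)\phi_{\lambda_0,j}(y)$ is smooth by Theorem~\ref{thm:boundary-vanishing}. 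It satisfies $(-\Delta_x-\Delta_y)P_{\lambda_0}=2\lambda_0P_{\lambda_0}$, so
\[
L_\mu\big[(\lambda_0-\mu)^{-1}P_{\lambda_0}\big]=2P_{\lambda_0}.
\]
This is smooth but depends on $\mu$, which the lemma does not allow. I therefore expect this step to be the main obstacle.

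I would resolve it by not applying the lemma to $R_{\lambda_0}$ directly. Instead, I apply it to $(A-\mu)^{-1}$ on a punctured disc to get holomorphy there. At $\lambda_0$ I use the alternative operator $\tilde T(\mu)=(\lambda_0-\mu)(A-\mu)^{-1}=P_{\lambda_0}+(\lambda_0-\mu)R_{\lambda_0}(\mu)$. This is holomorphic and locally bounded on the full disc. Its kernel satisfies $L_\mu\tilde K=0$ off the diagonal, by the same argument as for $G_D$, since multiplying by a scalar preserves the homogeneous equation. So the lemma applies with $F=0$ and gives that $\mu\mapsto\tilde K(x,y,\mu)$ is holomorphic near $\lambda_0$. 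Its value at $\lambda_0$ is $P_{\lambda_0}(x,y)$, obtained by evaluating the $C^\infty_{\mathrm{loc}}$-valued holomorphic map at $\mu=\lambda_0$ and comparing with the operator $P_{\lambda_0}$. Hence $G_D(x,y,\mu)=\tilde K(x,y,\mu)/(\lambda_0-\mu)$ is meromorphic with at most a simple pole at $\lambda_0$. Its residue in $\mu$ is $-P_{\lambda_0}(x,y)$. The pole is actual if and only if $P_{\lambda_0}(x,y)\ne0$. Smoothness off the diagonal for fixed non-resonant $k$ also follows from the lemma.

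Finally, I convert to $k$. Near $k_0=\pm\sqrt{\lambda_0}$ we have $\lambda_0-k^2=-(k-k_0)(k+k_0)$, so
\[
\operatorname{Res}_{k=k_0}G_D=-\frac{P_{\lambda_0}(x,y)}{2k_0}.
\]
This equals $-P_{\lambda_0}/(2\sqrt{\lambda_0})$ at $k_0=\sqrt{\lambda_0}$ and $+P_{\lambda_0}/(2\sqrt{\lambda_0})$ at $k_0=-\sqrt{\lambda_0}$. The pole-iff-nonvanishing criterion carries over unchanged, since $k\mapsto k^2$ is locally biholomorphic at $k_0\neq0$.
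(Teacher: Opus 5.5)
Your argument is correct, but near an eigenvalue it takes a different route from the paper. The paper applies Lemma~\ref{lem:kernel-holomorphy} directly to the regular part $H(\mu)$ of the operator Laurent expansion. It uses the inhomogeneous equation $(-\Delta_x-\Delta_y-2\mu)K_H=-2P_{\lambda_0}(x,y)$ off the diagonal.

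The obstacle you anticipated is not real. Your own computation gives $L_\mu\bigl[(\lambda_0-\mu)^{-1}P_{\lambda_0}\bigr]=2P_{\lambda_0}$, which does not depend on $\mu$. So the lemma applies to $R_{\lambda_0}$ with $F=-2P_{\lambda_0}$, exactly as in the paper.

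Your detour through $\tilde T(\mu)=(\lambda_0-\mu)(A-\mu)^{-1}$ is nevertheless a valid alternative. It uses the lemma only in its homogeneous form $F=0$. It also exhibits the residue as the value $\tilde K(x,y,\lambda_0)=P_{\lambda_0}(x,y)$ of a holomorphic kernel family. The paper's version instead gives the pointwise splitting $G_D(x,y,k)=P_{\lambda_0}(x,y)/(\lambda_0-k^2)+K_H(x,y,k^2)$ directly. The two are interchangeable via $K_H=\bigl(\tilde K-P_{\lambda_0}\bigr)/(\lambda_0-\mu)$.

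Two small points need attention.
\begin{itemize}
\item At $\mu=\lambda_0$ itself, ``multiplying by a scalar'' does not apply, since $\tilde T(\lambda_0)=P_{\lambda_0}$. Check the equation there from $(-\Delta_x-\Delta_y-2\lambda_0)P_{\lambda_0}=0$. Alternatively, use the identities $(A-\mu)\tilde T(\mu)=(\lambda_0-\mu)I$ and $\tilde T(\mu)(A-\mu)u=(\lambda_0-\mu)u$, which hold on the whole disc.
\item The symmetry $G_D(x,y,\mu)=G_D(y,x,\mu)$ that you use for nonreal $\mu$ is the bilinear (transpose) symmetry. It is valid because $A$ commutes with complex conjugation. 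The paper avoids it by testing the right-sided identity $H(\mu)(A-\mu)u=Qu$ instead.
\end{itemize}
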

\begin{proof}
The proof is divided into three parts: the Laurent expansion of
$(A-\mu)^{-1}$ near an eigenvalue; the joint smoothness and holomorphy of
the kernels off the diagonal, which is deduced from
Lemma~\ref{lem:kernel-holomorphy}; and the poles and residues.

\emph{Part~1: Laurent expansion of $(A-\mu)^{-1}$.}
By the standard theory of the meromorphic resolvent \cite{Kato1995},
$\mu\mapsto(A-\mu)^{-1}$ is meromorphic on $\mathbb C$, with poles
precisely at the eigenvalues of $A$, which are isolated and of finite
multiplicity because $A$ has compact resolvent. Fix
$\lambda\in\sigma(D)$, let $P_\lambda$ be the orthogonal projection onto
$\ker(A-\lambda)$, set $Q:=I-P_\lambda$, and choose $\rho>0$ with
$\sigma(D)\cap B(\lambda,\rho)=\{\lambda\}$. The subspace
$QL^2(D)=\ker(A-\lambda)^\perp$ reduces $A$, and
$\sigma\bigl(A|_{QL^2(D)}\bigr)=\sigma(D)\setminus\{\lambda\}$. Hence
$A-\mu$ is boundedly invertible on $QL^2(D)$ for every
$\mu\in B(\lambda,\rho)$. Let $\iota:QL^2(D)\hookrightarrow L^2(D)$
denote the inclusion and set
\begin{equation}\label{eq:Hmu-def}
H(\mu):=\iota\circ\bigl((A-\mu)|_{QL^2(D)}\bigr)^{-1}\circ Q,
\quad \mu\in B(\lambda,\rho).
\end{equation}
As $\mu\mapsto((A-\mu)|_{QL^2(D)})^{-1}$ is holomorphic and $Q,\iota$
are fixed and bounded, $H$ is holomorphic on $B(\lambda,\rho)$ with
values in $\mathcal{L}(L^2(D))$.
Moreover, since $A$ commutes with $Q$, we have
\[
  H(\mu)=(A-\mu)^{-1}Q,\quad \mu\in B(\lambda,\rho)\setminus\{\lambda\}.
\]
On $\ker(A-\lambda)$ the resolvent equals $(\lambda-\mu)^{-1}I$, so on the
direct sum $L^2(D)=\ker(A-\lambda)\oplus QL^2(D)$ we have
\begin{equation}\label{eq:laurent-expansion}
(A-\mu)^{-1}=\frac{P_\lambda}{\lambda-\mu}+H(\mu),
\quad \mu\in B(\lambda,\rho)\setminus\{\lambda\}.
\end{equation}

\emph{Part~2: Joint smoothness and holomorphy of the kernels off the
diagonal.}
Fix separated relatively compact open sets $U_x,U_y\Subset D$ with
$\overline{U_x}\cap\overline{U_y}=\varnothing$, so that $U_x\times U_y$
misses the diagonal. Let $K_H(\cdot,\cdot,\mu)$ denote the
distributional kernel of $H(\mu)$ in $\mathcal D'(D\times D)$. From
\eqref{eq:Hmu-def} and the fact that $A$ commutes with $Q$ we have, for
every $\mu\in B(\lambda,\rho)$, the operator identities
$(A-\mu)H(\mu)=Q$ (on $L^2(D)$) and $H(\mu)(A-\mu)u=Qu$ for
$u\in\mathcal{D}(A)$.

Testing the first identity against
$\alpha(x)\beta(y)$ with $\alpha\in C_c^\infty(U_x)$,
$\beta\in C_c^\infty(U_y)$ and integrating by parts in $x$ gives
\[
(-\Delta_x-\mu)K_H=\delta_{\Diag_{x=y}}-P_\lambda(x,y)
\quad\text{in }\mathcal D'(U_x\times U_y),
\]
where $\delta_{\Diag_{x=y}}$ is the delta distribution carried by the
diagonal $\Diag_{x=y}$. Similarly, testing the second identity
$H(\mu)(A-\mu)u=Qu$ (with $u=\beta\in C_c^\infty(U_y)\subset
\mathcal{D}(A)$) against $\alpha(x)\beta(y)$ and integrating by parts
in $y$ gives
\[
(-\Delta_y-\mu)K_H=\delta_{\Diag_{x=y}}-P_\lambda(x,y)
\quad\text{in }\mathcal D'(U_x\times U_y).
\]
Symmetry is not invoked for the second identity, since $\mu$ need not
be real.
Restricting to the open product
$U_x\times U_y$, on which $\delta_{\Diag_{x=y}}=0$, and adding the two
identities yields the product elliptic equation
\begin{equation}\label{eq:product-equation}
    (-\Delta_x-\Delta_y-2\mu)K_H=-2P_\lambda(x,y)
\quad\text{in }\mathcal D'(U_x\times U_y).
\end{equation}
Its right-hand side is jointly smooth on $U_x\times U_y$, being the finite
sum $P_\lambda(x,y)=\sum_j\phi_{\lambda,j}(x)\phi_{\lambda,j}(y)$ of
products of eigenfunctions, each of which is real-analytic in $D$
by Theorem~\ref{thm:boundary-vanishing}.

Since $U_x,U_y\Subset D$ were arbitrary separated open sets,
\eqref{eq:product-equation} holds in fact in
$\mathcal D'\bigl((D\times D)\setminus\Diag_{x=y}\bigr)$. By Part~1, $\mu\mapsto H(\mu)$ is holomorphic
on $B(\lambda,\rho)$ with values in $\mathcal L(L^2(D))$, hence with
locally bounded operator norm, while the right-hand side
$-2P_\lambda$ of \eqref{eq:product-equation} is smooth on $D\times D$
and independent of $\mu$. Applying
Lemma~\ref{lem:kernel-holomorphy} with $T:=H$,
$\Lambda:=B(\lambda,\rho)$ and $F:=-2P_\lambda$, we conclude that
$\mu\mapsto K_H(\cdot,\cdot,\mu)$ is holomorphic with values in
$C^\infty_{\mathrm{loc}}\bigl((D\times D)\setminus\Diag_{x=y}\bigr)$; in
particular, for fixed $x\neq y$, the map $\mu\mapsto K_H(x,y,\mu)$ is
holomorphic on $B(\lambda,\rho)$.

The same reasoning applies to the resolvent itself. Let $\Gamma_\mu$
denote the distributional kernel of $(A-\mu)^{-1}$ for
$\mu\notin\sigma(D)$. By the same testing argument, applied to
$(A-\mu)(A-\mu)^{-1}=I$ and to $(A-\mu)^{-1}(A-\mu)u=u$ for
$u\in\mathcal D(A)$,
\[
(-\Delta_x-\mu)\Gamma_\mu=(-\Delta_y-\mu)\Gamma_\mu=\delta_{\Diag_{x=y}}
\quad\text{in }\mathcal D'(D\times D),
\]
hence $(-\Delta_x-\Delta_y-2\mu)\Gamma_\mu=0$ off the diagonal.
Since $\mu\mapsto(A-\mu)^{-1}$ is holomorphic
with locally bounded operator norm on the resolvent set
$\mathbb C\setminus\sigma(D)$, Lemma~\ref{lem:kernel-holomorphy} with
$T(\mu):=(A-\mu)^{-1}$, $\Lambda:=\mathbb C\setminus\sigma(D)$ and
$F:=0$ shows that $\mu\mapsto\Gamma_\mu$ is holomorphic with values in
$C^\infty_{\mathrm{loc}}\bigl((D\times D)\setminus\Diag_{x=y}\bigr)$.
In particular, for each fixed $k$ with $k^2\notin\sigma(D)$, by
\eqref{eq:green-kernel-def},
$G_D(x,y,k)=\Gamma_{k^2}(x,y)\in C^\infty\bigl((D\times D)\setminus\Diag_{x=y}\bigr)$,
and for fixed $x\neq y$ the map $k\mapsto G_D(x,y,k)$ is holomorphic
on $\{k:k^2\notin\sigma(D)\}$.

\emph{Part~3: Poles and residues.}
For fixed $x\neq y$, all terms of \eqref{eq:laurent-expansion} are smooth
off the diagonal and may be evaluated pointwise: for every $k$ with
$k^2\in B(\lambda,\rho)\setminus\{\lambda\}$, which entails
$k^2\notin\sigma(D)$,
\[
G_D(x,y,k)=\frac{P_\lambda(x,y)}{\lambda-k^2}+K_H(x,y,k^2),
\]
with $K_H(x,y,\cdot)$ holomorphic on $B(\lambda,\rho)$. As a function of
$k\in\mathbb C$ the right-hand side is meromorphic: its second term is
holomorphic on $\{k:k^2\in B(\lambda,\rho)\}$, while $\lambda-k^2$
vanishes simply at $k=\pm\sqrt{\lambda}$. Since the left-hand side is a
priori defined only on $\{k:k^2\notin\sigma(D)\}$, this identity supplies
the meromorphic continuation of $k\mapsto G_D(x,y,k)$ across the spectral
points. Combined with the holomorphy of $k\mapsto G_D(x,y,k)$ on
$\{k:k^2\notin\sigma(D)\}$ established above, this shows that
$k\mapsto G_D(x,y,k)$ is meromorphic on $\mathbb C$, with possible poles only at $k=\pm\sqrt{\lambda}$,
$\lambda\in\sigma(D)$, where the preceding
argument is applied at each such $\lambda$. Moreover, $k=\sqrt{\lambda}$
is an actual pole if and only if $P_\lambda(x,y)\neq0$, since
$K_H(x,y,k^2)$ is holomorphic near $k=\sqrt{\lambda}$.
Substituting $\mu=k^2$ and using
\[
\frac{1}{\lambda-k^2}
=\frac{1}{2\sqrt{\lambda}}\Bigl(\frac{1}{\sqrt{\lambda}-k}
+\frac{1}{\sqrt{\lambda}+k}\Bigr)
\]
gives
\[
\operatorname{Res}_{k=\sqrt{\lambda}}G_D(x,y,k)
=-\frac{1}{2\sqrt{\lambda}}P_\lambda(x,y),
\quad
\operatorname{Res}_{k=-\sqrt{\lambda}}G_D(x,y,k)
=+\frac{1}{2\sqrt{\lambda}}P_\lambda(x,y).
\]
By \eqref{eq:green-kernel-def} the kernel depends on $k$ only through
$k^2$, so $G_D(x,y,-k)=G_D(x,y,k)$; the same criterion therefore governs
the pole at $k=-\sqrt{\lambda}$.
\end{proof}

The pole criterion in Theorem~\ref{theo:spec-expansion} is an off-diagonal statement for fixed \(x\neq y\). The diagonal case is handled separately via the singularity cancellation in Lemma~\ref{lem:diagonal}. Since \(G_D(y,y,k)\) is not defined because of the logarithmic singularity at \(x=y\), the diagonal quantity is treated through \(u^s(y,y,k)\), where the free-space singularity cancels. Indeed, for $k\in\mathbb C$ with $\Re k>0$ and $k^2\notin\sigma(D)$,
\begin{equation}\label{eq:GD-decomposition}    
G_D(x,y,k)=\Phi_k(x,y)+u^s(x,y,k),\quad x\neq y, \, x, y\in D,
\end{equation}
because $G_D(\cdot,y,k)-\Phi_k(\cdot,y)$ solves the same Dirichlet problem \eqref{eq:helmholtz}--\eqref{eq:dirichlet} as the scattered field. 

\begin{lemma}[Spectral representation of the backscattering field] \label{lem:diagonal}
For each $y \in D$ and $k \in \mathbb{C}$ with $\Re k>0$ and $k^2 \notin \sigma(D)$, the backscattering field $u^s(y, y, k)$ is a real-analytic function of $y \in D$. As a function of $k$, $u^s(y, y, k)$ is meromorphic on the right half-plane $\{k \in \mathbb C : \Re k > 0\}$. Its poles are contained in $\{\sqrt{\lambda}:\lambda\in\sigma(D)\}$, and the residue at $k = \sqrt{\lambda}$ is
\begin{equation} \label{eq:residue-backscattering-rigorous}
\operatorname{Res}_{k = \sqrt{\lambda}} u^s(y, y, k) = -\frac{1}{2\sqrt{\lambda}}\, P_\lambda(y, y).
\end{equation}
\end{lemma}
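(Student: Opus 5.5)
The plan is to write $u^s(x,y,k)=G_D(x,y,k)-\Phi_k(x,y)$ via \eqref{eq:GD-decomposition} and study this difference jointly in $(x,y,k)$ near the diagonal. The key structural fact is the following. For fixed $k$ with $\Re k>0$ and $k^2\notin\sigma(D)$, the function $w(x,y):=u^s(x,y,k)$ solves $(\Delta_x+k^2)w=0$ in $D$ for each fixed $y$, and symmetrically in $y$. The $y$-statement uses $G_D(x,y,k)=G_D(y,x,k)$, which holds because $A$ is real and self-adjoint, so the resolvent kernel is symmetric even for complex $k^2$, together with the symmetry of $\Phi_k$. Hence $w$ is a distributional solution of the elliptic equation $(\Delta_x+\Delta_y+2k^2)w=0$ on all of $D\times D$, the diagonal included. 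To justify this across the diagonal, I would note that $w(\cdot,y)\in H^1(D)$ is the Helmholtz solution with boundary data $-\Phi_k(\cdot,y)$, which depends smoothly on $y$ for $y$ in a compact subset of $D$, since the data live on $\partial D$, away from $y$. Interior analyticity for elliptic equations with constant coefficients \cite{MorreyNirenberg1957} then makes $w$ real-analytic on $D\times D$. Restricting to the diagonal, $y\mapsto u^s(y,y,k)$ is real-analytic.

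For the dependence on $k$, fix $y_0\in D$ and a small disk $B=B(y_0,r)\Subset D$, and pick $x_0\in D$ with $x_0\ne y$ for all $y\in \overline B$. Consider the map $k\mapsto w(\cdot,\cdot,k)$ on $\overline B\times\overline B$. By Theorem~\ref{theo:spec-expansion} and Lemma~\ref{lem:kernel-holomorphy}, the function $G_D(x,y,k)$ is holomorphic in $k$ away from $\pm\sqrt\lambda$ with values in $C^\infty_{\mathrm{loc}}$ off the diagonal. The function $\Phi_k(x,y)$ is holomorphic in $k$ on $\{\Re k>0\}$ for $x\neq y$. So $w(x,y,k)$ is holomorphic in $k$ for $x\ne y$ and extends smoothly across $x=y$. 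To transfer holomorphy to the diagonal, I would use the mean value property for the Helmholtz operator on small circles in $x$:
\[
w(y,y,k)=\frac{1}{2\pi J_0(k\rho)}\int_0^{2\pi} w(y+\rho e^{i\theta},y,k)\,d\theta,
\]
with $\rho$ small and $J_0(k\rho)\ne0$ for $k$ near a given $k_0$. The integrand involves only off-diagonal points, where holomorphy in $k$ is locally uniform, so the right-hand side is holomorphic in $k$ wherever $k^2\notin\sigma(D)$. This shows $u^s(y,y,\cdot)$ is holomorphic on $\{\Re k>0,\ k^2\notin\sigma(D)\}$.

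For the poles and residues, fix $\lambda\in\sigma(D)$ and use Part~1 of the proof of Theorem~\ref{theo:spec-expansion}:
\[
u^s(x,y,k)=\frac{P_\lambda(x,y)}{\lambda-k^2}+\bigl[K_H(x,y,k^2)-\Phi_k(x,y)\bigr]
\]
for $x\neq y$ and $k^2$ near $\lambda$. The bracket is holomorphic in $k$ near $\sqrt\lambda$ off the diagonal. Applying the same circle-average identity to it shows that it extends across $x=y$ with a diagonal value holomorphic in $k$. For the bracket, $(\Delta_x+k^2)$ applied to it gives the smooth function $-P_\lambda(\cdot,y)$ rather than zero, so the mean value formula must be replaced by its inhomogeneous version, which adds a holomorphic correction term. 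Alternatively, I would apply the identity directly to $u^s$ for $k\ne\sqrt\lambda$ and average the first term explicitly. Since $P_\lambda(\cdot,y)$ is itself a Helmholtz eigenfunction with $\Delta+\lambda$, its circle average equals $2\pi J_0(\sqrt\lambda\rho)P_\lambda(y,y)$, so the pole part of $u^s(y,y,k)$ is
\[
\frac{J_0(\sqrt\lambda\rho)}{J_0(k\rho)}\frac{P_\lambda(y,y)}{\lambda-k^2}.
\]
Its residue at $k=\sqrt\lambda$ is $-P_\lambda(y,y)/(2\sqrt\lambda)$, which is \eqref{eq:residue-backscattering-rigorous}, and the remaining terms are holomorphic. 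Meromorphy on the right half-plane with poles in $\{\sqrt\lambda\}$ follows. The main obstacle I expect is this passage from off-diagonal holomorphy to the diagonal. The circle-average device handles it, but one must check that the averaged remainder is holomorphic uniformly in $k$ near $\sqrt\lambda$, choosing $\rho$ so that $J_0(k\rho)\neq 0$ there.
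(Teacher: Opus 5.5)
Your proposal is correct. Step~1 (reciprocity, the product equation $(\Delta_x+\Delta_y+2k^2)u^s=0$ on all of $D\times D$, then analytic hypoellipticity) is essentially the paper's argument, but for the $k$-dependence you take a genuinely different route.

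The paper localizes with a cutoff. It shows that $v_\chi=G_D(\cdot,y_0,k)-\chi\Phi_k(\cdot,y_0)$ lies in $\mathcal D(A)$ and equals $(A-k^2)^{-1}f(\cdot,k)$, where $f(\cdot,k)\in C_c^\infty(D)$ is a holomorphic family supported in an annulus. Meromorphy is then inherited from the resolvent, after an interior elliptic upgrade from $L^2$ to pointwise values. The residue follows from the Green's-identity computation $\int_D\phi_{\lambda,j}f(\cdot,\sqrt\lambda)=\phi_{\lambda,j}(y_0)$.

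You instead use the Helmholtz mean-value identity $\frac1{2\pi}\int_0^{2\pi}u(y+\rho e^{i\theta})\,d\theta=J_0(k\rho)\,u(y)$, which is valid for complex $k$, to write the diagonal value as a circle average of off-diagonal values. The $C^\infty_{\mathrm{loc}}$-valued holomorphy off the diagonal from Lemma~\ref{lem:kernel-holomorphy} and the Laurent expansion of Theorem~\ref{theo:spec-expansion} then pass to the diagonal with no further functional analysis. The residue comes out of the explicit average $2\pi J_0(\sqrt\lambda\rho)P_\lambda(y,y)$ of the eigenfunction $P_\lambda(\cdot,y)$.

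Your route is shorter. It avoids the $H^1_0$/trace verification and the elliptic upgrade, and the $k$-analysis uses only that $u^s(\cdot,y,k)$ solves the Helmholtz equation in $D$, not the joint analyticity of Step~1. The paper's route is more robust: it does not depend on an exact mean-value formula, which is special to constant-coefficient Helmholtz operators. It also gives meromorphy of the whole local function $u^s(\cdot,y_0,k)$ with values in $C^\infty(B')$, not just of a single value.

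Two small points to record. First, choose $\rho$ with $\overline{B(y,\rho)}\subset D$ and $|k|\rho<j_{0,1}$ on the relevant $k$-neighborhood; all zeros of $J_0$ are real, so $J_0\neq0$ on $\{|z|<j_{0,1}\}$. Second, there is a sign slip: $(\Delta_x+k^2)\bigl[K_H(\cdot,y,k^2)-\Phi_k(\cdot,y)\bigr]=+P_\lambda(\cdot,y)$, not $-P_\lambda(\cdot,y)$. This is harmless because you carry out the alternative computation.
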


\begin{proof}
The proof is divided into two steps: real analyticity in $y$, and meromorphy with the residue formula.

\emph{Step~1: Real analyticity in $y$.}
Fix $k$ with $\Re k>0$ and $k^2\notin\sigma(D)$. For each $w\in D$,
the solution $u^s(\cdot,w,k)\in H^1(D)$ of
\eqref{eq:helmholtz}--\eqref{eq:dirichlet} satisfies the homogeneous
equation \eqref{eq:helmholtz} in all of $D$ and is therefore
real-analytic in $D$ by interior elliptic regularity. Moreover,
standard continuous dependence for
\eqref{eq:helmholtz}--\eqref{eq:dirichlet}, together with the smoothness
of $(\xi,w)\mapsto\Phi_k(\xi,w)$ on $\partial D\times D$, gives that
$w\mapsto u^s(\cdot,w,k)$ is continuous from $D$ into $L^2(D)$; since
differences of solutions solve the homogeneous equation
\eqref{eq:helmholtz}, interior estimates make $u^s(\cdot,\cdot,k)$
jointly continuous on $D\times D$. In particular,
$u^s(\cdot,\cdot,k)\in L^1_{\mathrm{loc}}(D\times D)$ defines a
distribution on $D\times D$. Testing against $\chi\in C_c^\infty(D\times D)$ and using
\eqref{eq:helmholtz} together with Fubini's theorem gives
\[
(-\Delta_z-k^2)u^s=0\quad\text{in }\mathcal D'(D\times D).
\]
For the second variable we use the reciprocity relation
\[
u^s(z,w,k)=u^s(w,z,k),\quad z,w\in D.
\]
Indeed, Weyl's law $\lambda_n\sim 4\pi n/|D|$ gives
$\sum_{n\ge1}|\lambda_n-k^2|^{-2}<\infty$, so $(A-k^2)^{-1}$ is
Hilbert--Schmidt and its kernel admits the $L^2(D\times D)$-convergent
eigenfunction expansion
\[
G_D(z,w,k)=\sum_{n\ge1}\frac{\phi_n(z)\phi_n(w)}{\lambda_n-k^2}
=G_D(w,z,k)\quad\text{a.e. on }D\times D.
\]
Furthermore, by the off-diagonal continuity of the kernel
(Theorem~\ref{theo:spec-expansion}) and the symmetry of $\Phi_k$, the
decomposition \eqref{eq:GD-decomposition} yields the reciprocity
relation for $z\neq w$, and the diagonal case follows from the joint
continuity just established. Applying the previous paragraph with the
roles of $z$ and $w$ interchanged therefore gives
$(-\Delta_w-k^2)u^s=0$ in $\mathcal D'(D\times D)$, and adding the two
identities yields
\[
(-\Delta_z-\Delta_w-2k^2)u^s=0\quad\text{in }\mathcal D'(D\times D).
\]
This is an elliptic equation with constant, hence real-analytic,
coefficients, so analytic hypoellipticity (see, e.g., Morrey and
Nirenberg \cite{MorreyNirenberg1957}) shows that $u^s(\cdot,\cdot,k)$
coincides almost everywhere with a complex-valued real-analytic
function on $D\times D$; being jointly continuous,
$u^s(\cdot,\cdot,k)$ is itself real-analytic on $D\times D$. Finally,
for $z\neq y$ the decomposition \eqref{eq:GD-decomposition} gives
$u^s(z,y,k)=G_D(z,y,k)-\Phi_k(z,y)$, and letting $z\to y$ yields
\[
u^s(y,y,k)=\lim_{z\to y}\bigl(G_D(z,y,k)-\Phi_k(z,y)\bigr),
\]
so that $y\mapsto u^s(y,y,k)$ is real-analytic in $D$.

\emph{Step~2: Meromorphy and residue formula.}
Fix \(y_0\in D\) and \(r>0\) with \(B(y_0,2r)\subset D\), and let
\(\chi\in C_c^\infty(B(y_0,2r))\) satisfy \(\chi\equiv1\) on \(B(y_0,r)\).
Set
\[
v_\chi(\cdot,k):=G_D(\cdot,y_0,k)-\chi\,\Phi_k(\cdot,y_0).
\]
Since \(\chi\equiv1\) near \(y_0\), the singularity of \(G_D\) at \(y_0\)
is canceled by \(\chi\Phi_k\). Writing \([P,\chi]:=P\chi-\chi P\) for
the commutator of a differential operator \(P\) with multiplication by
\(\chi\), we obtain
\[
(-\Delta-k^2)v_\chi
=-\bigl[(-\Delta-k^2),\chi\bigr]\Phi_k(\cdot,y_0)=:f(\cdot,k),
\]
where \(f(\cdot,k)\) is holomorphic in \(k\) as a
\(C_c^\infty(D)\)-valued map, supported in
\(B(y_0,2r)\setminus B(y_0,r)\). We next verify that
\(v_\chi(\cdot,k)\in H^1_0(D)\). Since
\(G_D(\cdot,y_0,k)=\Phi_k(\cdot,y_0)+u^s(\cdot,y_0,k)\) away from \(y_0\),
the definition of \(v_\chi\) gives the decomposition
\[
v_\chi(\cdot,k)=(1-\chi)\,\Phi_k(\cdot,y_0)+u^s(\cdot,y_0,k).
\]
Because \(\chi\equiv1\) on \(B(y_0,r)\), the function \((1-\chi)\Phi_k\)
vanishes near \(y_0\) and is smooth on \(\overline D\); hence
\((1-\chi)\Phi_k(\cdot,y_0)\in H^1(D)\). Moreover
\(u^s(\cdot,y_0,k)\in H^1(D)\) solves \eqref{eq:helmholtz}--\eqref{eq:dirichlet},
so \(v_\chi(\cdot,k)\in H^1(D)\). Finally, \(\chi\) vanishes on a
neighborhood of \(\partial D\), while
\(\operatorname{Tr}u^s=-\operatorname{Tr}\Phi_k\) on \(\partial D\) by
\eqref{eq:dirichlet}; hence
\(\operatorname{Tr}v_\chi=\operatorname{Tr}\Phi_k-\operatorname{Tr}\Phi_k=0\),
and therefore $v_\chi(\cdot,k)\in H^1_0(D)$. Since
$f(\cdot,k)\in C_c^\infty(D)\subset L^2(D)$, the distributional
equation $(-\Delta-k^2)v_\chi=f(\cdot,k)$ yields
$-\Delta v_\chi=f(\cdot,k)+k^2v_\chi\in L^2(D)$, so
$v_\chi(\cdot,k)\in\mathcal D(A)$ and
$(A-k^2)v_\chi(\cdot,k)=f(\cdot,k)$ as an identity in $L^2(D)$.
Since $k^2\notin\sigma(D)$,
\begin{equation}\label{eq:v_chi-representation}
v_\chi(\cdot,k)=(A-k^2)^{-1}f(\cdot,k)
\quad\text{in }L^2(D).
\end{equation}
On $B(y_0,r)$ one has $\chi\equiv1$, so
$v_\chi(\cdot,k)=G_D(\cdot,y_0,k)-\Phi_k(\cdot,y_0)$ there, which by
\eqref{eq:GD-decomposition} equals $u^s(\cdot,y_0,k)$ on
$B(y_0,r)\setminus\{y_0\}$; both functions are continuous on
$B(y_0,r)$, the former by the construction of $v_\chi$ and the latter
by Step~1, and hence also $v_\chi(y_0,k)=u^s(y_0,y_0,k)$.

The right-hand side of \eqref{eq:v_chi-representation} is meromorphic as an \(L^2(D)\)-valued function of
\(k\) on \(\{\operatorname{Re}k>0\}\), with possible poles only at
\(k=\sqrt{\lambda}\), \(\lambda\in\sigma(D)\). Since \(f(\cdot,k)\) is
supported away from \(B(y_0,r)\), the same interior elliptic estimates
and Cauchy formula as in the proof of
Theorem~\ref{theo:spec-expansion} improve this to a meromorphic
\(C^\infty(B')\)-valued map for every \(B'\Subset B(y_0,r)\). In
particular, \(k\mapsto u^s(y_0,y_0,k)\) is meromorphic on the right
half-plane.

For the residue, \eqref{eq:laurent-expansion} gives, near
\(k=\sqrt{\lambda}\),
\[
v_\chi(\cdot,k)=\frac{P_\lambda f(\cdot,k)}{\lambda-k^2}+h(\cdot,k),
\]
with \(h\) holomorphic in \(H^m(B')\) for every \(m\). Since
\(f(\cdot,k)\) is holomorphic at \(k=\sqrt{\lambda}\) and
\(\operatorname{Res}_{k=\sqrt{\lambda}}(\lambda-k^2)^{-1}=-1/(2\sqrt{\lambda})\),
it remains to check
\((P_\lambda f(\cdot,\sqrt{\lambda}))(y_0)=P_\lambda(y_0,y_0)\).
The commutator \([(-\Delta-k^2),\chi]\Phi_k(\cdot,y_0)\) is smooth and
supported in \(D\setminus\{y_0\}\). For each eigenfunction
\(\phi_{\lambda,j}\),
\begin{align*}
\int_D\phi_{\lambda,j}\,f(\cdot,k)
&=-\bigl\langle\phi_{\lambda,j},\bigl[(-\Delta-k^2),\chi\bigr]
\Phi_k(\cdot,y_0)\bigr\rangle\\
&=-(\lambda-k^2)\bigl\langle\phi_{\lambda,j},\chi\Phi_k(\cdot,y_0)\bigr\rangle
+\phi_{\lambda,j}(y_0).
\end{align*}
At \(k=\sqrt{\lambda}\) this yields
\(\int_D\phi_{\lambda,j}f(\cdot,\sqrt{\lambda})=\phi_{\lambda,j}(y_0)\), hence
\[
\bigl(P_\lambda f(\cdot,\sqrt{\lambda})\bigr)(y_0)
=\sum_{j=1}^{m_\lambda}\phi_{\lambda,j}(y_0)^2=P_\lambda(y_0,y_0).
\]
As \(y_0\) was arbitrary, \eqref{eq:residue-backscattering-rigorous}
follows.
\end{proof}

The spectral measure extraction in Lemma~\ref{lem:diagonal} determines the diagonal value $P_\lambda(y,y)$ of the spectral projection kernel on the measurement curve $\M$. However, an eigenvalue $\lambda$ is detectable from the backscattering data only if its contribution does not vanish identically on $\M$. This motivates the following definition.

\begin{definition}[Visible and invisible eigenvalues] \label{def:visible}
For $\lambda \in \sigma(D)$, we say that $\lambda$ is \emph{visible} with respect to $\M$ if $P_\lambda(y,y)$ is not identically zero on $\M$; otherwise, $\lambda$ is called \emph{invisible} with respect to $\M$.
Equivalently, $\lambda$ is invisible with respect to $\M$ if and only if every eigenfunction in the $\lambda$-eigenspace vanishes on $\M$: choosing the eigenfunctions $\phi_{\lambda,j}$ to be real-valued, one has $P_\lambda(y,y)=\sum_{j=1}^{m_\lambda}\phi_{\lambda,j}(y)^2$, a sum of squares, so $P_\lambda(y,y)$ vanishes identically on $\M$ if and only if each $\phi_{\lambda,j}$ does. We say that an eigenspace is \emph{fully invisible} if all of its eigenfunctions vanish on $\M$, and \emph{partially visible} if it contains both eigenfunctions vanishing on $\M$ and eigenfunctions that do not vanish identically on $\M$.
Define $V(D):= \{\lambda \in \sigma(D): \lambda \text{ is visible with respect to } \M\}$.
\end{definition}

\begin{remark}[An invisible eigenspace on the disk]
\label{rmk:disk}
Invisibility is realized by concrete planar domains. Let $D = B(0,R)$ be a
disk and $\M = \{|x| = r_0\}$ a concentric
circle with $0 < r_0 < R$. The Dirichlet eigenfunctions are, up to
normalization constants,
\[
\phi_{m,n}(r,\theta) = J_m(j_{m,n} r/R)
\begin{cases}
\cos(m\theta) \\
\sin(m\theta)
\end{cases}, 
\quad
\lambda_{m,n}=(j_{m,n}/R)^2,
\]
where $j_{m,n}$ is the $n$-th positive zero of the Bessel
function $J_m$. Fix $m\ge0$ and $n\ge2$, choose $1\le\ell<n$, and set
$r_0=R\,j_{m,\ell}/j_{m,n}$, so that $0<r_0<R$. Then
$J_m(j_{m,n}r_0/R)=J_m(j_{m,\ell})=0$; hence the radial factor, and
therefore every eigenfunction of angular order $m$ and eigenvalue
$\lambda_{m,n}$, vanishes identically on $\M$.

For $\lambda_{m,n}$ to be \emph{fully} invisible, one must exclude
accidental coincidences with Bessel zeros of other orders. The full
eigenspace is
\[
E_{\lambda_{m,n}}(D)
=
\operatorname{span}\bigl\{\phi_{m',n'}:j_{m',n'}=j_{m,n}\bigr\},
\]
and if $j_{m',n'}=j_{m,n}$ for some $m'\ne m$, then $\phi_{m',n'}$ belongs
to the eigenspace but does not vanish on $\M$: its radial factor there is
$J_{m'}(j_{m,\ell})$, which is nonzero by Bourget's theorem recalled
below. In that case $\lambda_{m,n}$ is only partially visible. Bourget's
hypothesis, proved by Siegel \cite{Siegel1929}, rules such coincidences
out: for integers $p\ge0$ and $q\ge1$, the functions $J_p$ and $J_{p+q}$
have no common positive zeros. Applying this to
$\{m,m'\}=\{p,p+q\}$ gives $j_{m,n}\ne j_{m',n'}$ whenever $m'\ne m$.
Hence the eigenspace of $\lambda_{m,n}$ contains only angular modes of
order $m$ (one-dimensional for $m=0$ and two-dimensional for $m\ge1$) and
$\lambda_{m,n}$ is fully invisible from $\M$ for every $m\ge0$ and every
$n\ge2$.

Thus invisible eigenvalues do occur: there exist a domain $D$ and a
measurement curve $\M$ for which some Dirichlet eigenvalue of $D$ is fully
invisible from $\M$. Whether the multiplicity gap of
Theorem~\ref{thm:main-unique} is realized by a pair of \emph{distinct}
domains with identical curve data remains open; see
Section~\ref{sec:discussion}.
\end{remark}

\section{Main Results} \label{sec:uniqueness}

Throughout this section, let \(D_1,D_2\subset\mathbb R^2\) be bounded
connected Lipschitz domains, and let \(\Omega\Subset D_1\cap D_2\) be a
bounded connected Lipschitz domain whose boundary \(\M:=\partial\Omega\) is
connected. Let \(D^\ast\) be the connected component of \(D_1\cap D_2\) containing
\(\overline{\Omega}\).
See Figure~\ref{fig:setup} for an illustration of the geometric setup. 

\begin{figure}[htbp]
\centering
\begin{tikzpicture}[scale=0.85, line join=round, line cap=round]
  \begin{scope}
    \clip
      (-4.2,0)
        .. controls (-4.1,1.9) and (-3.15,3.15) .. (-1.75,2.95)
        .. controls (0.2,3.25) and (1.55,2.8) .. (2.25,1.18)
        .. controls (0.1,0.6) and (-0.4,0.4) .. (-0.55,0.0)
        .. controls (-0.4,-0.4) and (0.1,-0.8) .. (0.4,-1.0)
        .. controls (0.2,-1.8) and (-0.4,-2.4) .. (-1.8,-2.6)
        .. controls (-3.2,-2.8) and (-4.1,-1.7) .. (-4.2,0) -- cycle;
    \clip
      (4.2,0)
        .. controls (4.1,1.9) and (3.15,3.15) .. (1.75,2.95)
        .. controls (-0.2,3.25) and (-1.55,2.8) .. (-2.25,1.18)
        .. controls (-0.1,0.6) and (0.4,0.4) .. (0.55,0.0)
        .. controls (0.4,-0.4) and (-0.1,-0.8) .. (-0.4,-1.0)
        .. controls (-0.2,-1.8) and (0.4,-2.4) .. (1.8,-2.6)
        .. controls (3.2,-2.8) and (4.1,-1.7) .. (4.2,0) -- cycle;
    \clip (-3.2,0.48) rectangle (3.2,3.4);
    \fill[gray!32] (-3.2,0.48) rectangle (3.2,3.4);
  \end{scope}
  \draw[thick, blue!65!black, fill=blue!10, fill opacity=0.45]
    (-4.2,0)
      .. controls (-4.1,1.9) and (-3.15,3.15) .. (-1.75,2.95)
      .. controls (0.2,3.25) and (1.55,2.8) .. (2.25,1.18)
      .. controls (0.1,0.6) and (-0.4,0.4) .. (-0.55,0.0)
      .. controls (-0.4,-0.4) and (0.1,-0.8) .. (0.4,-1.0)
      .. controls (0.2,-1.8) and (-0.4,-2.4) .. (-1.8,-2.6)
      .. controls (-3.2,-2.8) and (-4.1,-1.7) .. (-4.2,0);
  \draw[thick, red!70!black, densely dashed, fill=red!10, fill opacity=0.45]
    (4.2,0)
      .. controls (4.1,1.9) and (3.15,3.15) .. (1.75,2.95)
      .. controls (-0.2,3.25) and (-1.55,2.8) .. (-2.25,1.18)
      .. controls (-0.1,0.6) and (0.4,0.4) .. (0.55,0.0)
      .. controls (0.4,-0.4) and (-0.1,-0.8) .. (-0.4,-1.0)
      .. controls (-0.2,-1.8) and (0.4,-2.4) .. (1.8,-2.6)
      .. controls (3.2,-2.8) and (4.1,-1.7) .. (4.2,0);
  \draw[line width=0.9pt, fill=white]
    (0,1.62) ellipse (1.02 and 0.44);
  \node[blue!65!black, font=\large] at (-3.15,2.1) {$D_1$};
  \node[red!70!black, font=\large] at (3.15,2.1) {$D_2$};
  \node[font=\normalsize] at (0,1.62) {$\Omega$};
  \node[anchor=south, font=\small] at (0,2.12) {$\M=\partial \Omega$};
  \node[anchor=east, font=\small] at (0.5,0.95) {$D^\ast$};
\end{tikzpicture}
\caption{Geometric setup for Theorem~\ref{thm:main-unique}: $\M=\partial\Omega$ for a bounded Lipschitz domain $\Omega$,
and $D^\ast$ is the connected component of $D_1\cap D_2$ containing
$\overline{\Omega}$. The boundary of $D_1$ is drawn solid and that of $D_2$ dashed.}
\label{fig:setup}
\end{figure}

\subsection{Curve setting: spectral determination lemmas}
Throughout the comparison arguments below, equalities stated under the
condition $k^2 \notin \sigma(D_1)\cup \sigma(D_2)$ are understood in the
following sense: they hold for every frequency in the interval $(k_{\min},k_{\max})$ for
which both forward problems are well posed. Since the
spectra $\sigma(D_1)$ and $\sigma(D_2)$ are discrete, the exceptional set $\{k\in(k_{\min},k_{\max}):k^2\in\sigma(D_1)\cup\sigma(D_2)\}$
is finite,
though it depends on the unknown domains. Set
\begin{equation*}
  \mathcal{K}:=\{k\in(k_{\min},k_{\max}):k^2\notin\sigma(D_1)\cup\sigma(D_2)\}.
\end{equation*}

We first record the residue-comparison lemma for the curve-measurement setting.

\begin{lemma}[Visible spectrum from curve data] \label{lem:visible-spectrum}
If the multi-frequency backscattering data coincide on $\M$, i.e.,
\begin{equation*}
u^s_1(y, y, k) = u^s_2(y, y, k), \quad  y \in \M, \; k \in \mathcal{K},
\end{equation*}
then the following equalities hold.
\begin{enumerate}
\item[(1)] The visible Dirichlet eigenvalues coincide, i.e.,
      \begin{equation*}
          V(D_1) = V(D_2).
      \end{equation*}
\item[(2)] For every visible eigenvalue $\lambda \in V(D_1) = V(D_2)$, the spectral projection diagonals agree on $\M$, i.e.,
      \begin{equation*}
          P_\lambda^{(1)}(y,y) = P_\lambda^{(2)}(y,y),\quad y \in \M.
      \end{equation*}
\item[(3)] The spectral measures agree on $\M$:
      \begin{equation*}
          \mu_y^{(1)} = \mu_y^{(2)},\quad y \in \M.
      \end{equation*}
\end{enumerate}
\end{lemma}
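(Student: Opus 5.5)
The plan is to pass from equality of data on $\mathcal K$ to equality of the meromorphic functions, and then compare residues at each pole using Lemma~\ref{lem:diagonal}. Fix $y\in\M$. By Lemma~\ref{lem:diagonal}, $k\mapsto u^s_j(y,y,k)$ is meromorphic on the right half-plane $\{\Re k>0\}$ for $j=1,2$. Its possible poles lie in the discrete set $\{\sqrt\lambda:\lambda\in\sigma(D_j)\}$.

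\emph{Step 1: equality on a connected domain.} Set $F(k):=u^s_1(y,y,k)-u^s_2(y,y,k)$. This function is holomorphic on $\{\Re k>0\}$ minus the discrete set $\{\sqrt\lambda:\lambda\in\sigma(D_1)\cup\sigma(D_2)\}$. That set is a connected open set, and $F$ vanishes on $\mathcal K$, which contains a nondegenerate real interval and hence has accumulation points. By the identity theorem, $F\equiv0$. So the two meromorphic functions coincide on the whole right half-plane, and in particular they have the same Laurent expansions at every point $\sqrt\lambda$, for every $\lambda\in\sigma(D_1)\cup\sigma(D_2)$, not only those with $\sqrt\lambda\in(k_{\min},k_{\max})$. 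I expect this to be the main point requiring care. Without analytic continuation, the residues could be compared only at poles inside the frequency window. The continuation is legitimate because Lemma~\ref{lem:diagonal} gives meromorphy on the full half-plane and not just near the real interval.

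\emph{Step 2: residue comparison.} Take any $\lambda>0$. Use the convention $P^{(j)}_\lambda\equiv0$ when $\lambda\notin\sigma(D_j)$; this is consistent because $u^s_j(y,y,\cdot)$ is then holomorphic near $\sqrt\lambda$. Equating residues at $k=\sqrt\lambda$ via \eqref{eq:residue-backscattering-rigorous} gives
\[
-\frac{1}{2\sqrt\lambda}P^{(1)}_\lambda(y,y)=-\frac{1}{2\sqrt\lambda}P^{(2)}_\lambda(y,y),\qquad y\in\M .
\]
Hence $P^{(1)}_\lambda(y,y)=P^{(2)}_\lambda(y,y)$ on $\M$ for every $\lambda>0$. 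This gives (2) at once.

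\emph{Step 3: visible spectra and spectral measures.} For (1), suppose $\lambda\in V(D_1)$. Then $P^{(1)}_\lambda(\cdot,\cdot)$ does not vanish identically on $\M$, so neither does $P^{(2)}_\lambda$. This forces $\lambda\in\sigma(D_2)$, because otherwise $P^{(2)}_\lambda\equiv0$ by convention. Therefore $\lambda\in V(D_2)$, and the reverse inclusion follows by symmetry. For (3), recall the definition \eqref{eq:spectral-measure}: $\mu^{(j)}_y=\sum_\lambda P^{(j)}_\lambda(y,y)\delta_\lambda$. For $y\in\M$, the invisible eigenvalues contribute zero weight at every such $y$. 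The visible ones contribute equal weights by (2). Hence $\mu^{(1)}_y=\mu^{(2)}_y$ on $\M$. In fact the pointwise equality from Step 2 already holds atom by atom for every $\lambda>0$, so (3) follows directly without any further use of visibility.
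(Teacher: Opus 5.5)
Your proposal is correct and follows essentially the same route as the paper. It continues analytically via the identity theorem on the right half-plane; you usefully make explicit that the half-plane minus the discrete pole set is connected. It then compares residues at every $\sqrt{\lambda}$ using Lemma~\ref{lem:diagonal} with the convention $P^{(j)}_\lambda\equiv 0$ for $\lambda\notin\sigma(D_j)$, and reads off (1)--(3) from the atom-by-atom equality $P^{(1)}_\lambda(y,y)=P^{(2)}_\lambda(y,y)$ on $\M$.
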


\begin{proof}
Fix $y \in \M$. Lemma~\ref{lem:diagonal} shows that, for each $i=1,2$, the function $k\mapsto u^s_i(y,y,k)$ is meromorphic on the right half-plane $\{k \in \mathbb C : \Re k > 0\}$. Since the data coincide on $\mathcal{K}$, which has accumulation points in the right half-plane $\{k \in \mathbb C : \Re k > 0\}$, the identity theorem for meromorphic functions implies that $u^s_1(y,y,\cdot)$ and $u^s_2(y,y,\cdot)$ agree as meromorphic functions on $\{\Re k>0\}$. Hence the two Laurent expansions at $k=\sqrt{\lambda}$ coincide for every $\lambda\in\sigma(D_1)\cup\sigma(D_2)$. With the convention $P_\lambda^{(i)}\equiv0$ for $\lambda\notin\sigma(D_i)$, which renders the residue formula above valid for every $\lambda\in\sigma(D_1)\cup\sigma(D_2)$ (both sides being $0$ when $\lambda\notin\sigma(D_i)$), comparing the coefficients of $(k-\sqrt{\lambda})^{-1}$ therefore gives
\begin{equation} \label{eq:diagonal-equality}
P_\lambda^{(1)}(y,y) = P_\lambda^{(2)}(y,y)
\end{equation}
for every \(\lambda \in \sigma(D_1) \cup \sigma(D_2)\).

Let $\lambda$ be visible with respect to $\M$ for $D_1$. Then there exists $y_0 \in \M$ such that $P_\lambda^{(1)}(y_0,y_0) \neq 0$. Equation \eqref{eq:diagonal-equality} gives $P_\lambda^{(2)}(y_0,y_0) \neq 0$, so $\lambda$ is also visible for $D_2$. The reverse implication is identical. Therefore
$V(D_1) = V(D_2)$. For every visible eigenvalue $\lambda \in V(D_1)=V(D_2)$, equation \eqref{eq:diagonal-equality} holds for all $y \in \M$, proving (2).

Assertion (3) follows directly from \eqref{eq:diagonal-equality}. Indeed, for each $y \in \M$,
\begin{align*}
\mu_y^{(1)}
&= \sum_{\lambda \in \sigma(D_1)} P_\lambda^{(1)}(y,y)\,\delta_\lambda \\
&= \sum_{\lambda \in \sigma(D_1) \cup \sigma(D_2)} P_\lambda^{(1)}(y,y)\,\delta_\lambda \\
&= \sum_{\lambda \in \sigma(D_1) \cup \sigma(D_2)} P_\lambda^{(2)}(y,y)\,\delta_\lambda \\
&= \mu_y^{(2)}.
\end{align*}
\end{proof}

Before recording what the curve data determine, we isolate the pointwise
mechanism that underlies the Dirichlet heat-kernel.

\begin{lemma}[Pointwise determination of the heat-kernel diagonal]
\label{lem:point-heat-diagonal}
Let \(y_0\in D_1\cap D_2\). Assume that
\[
u^s_1(y_0, y_0, k) = u^s_2(y_0, y_0, k), \quad k \in \mathcal{K}.
\]
Then
\begin{equation}\label{eq:KD1=KD2}
K_{D_1}(t,y_0,y_0)=K_{D_2}(t,y_0,y_0), \quad t>0,
\end{equation}
where \(K_D\) denotes the Dirichlet heat kernel in \(D\).
\end{lemma}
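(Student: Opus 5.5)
The proof follows the residue argument of Lemma~\ref{lem:visible-spectrum}, applied at the single point \(y_0\), and then feeds the result into the Laplace-transform representation \eqref{eq:heat-diagonal-laplace} of the heat-kernel diagonal.

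\emph{Step 1: the two meromorphic functions agree.} By Lemma~\ref{lem:diagonal}, applied in each domain \(D_i\) (note \(y_0\in D_i\)), the map \(k\mapsto u^s_i(y_0,y_0,k)\) is meromorphic on \(\{\Re k>0\}\). Its poles are contained in \(\{\sqrt\lambda:\lambda\in\sigma(D_i)\}\). The set \(\mathcal K\) is the interval \((k_{\min},k_{\max})\) minus finitely many points, so it accumulates inside the right half-plane. The two functions coincide on \(\mathcal K\), and the complement of their pole sets in \(\{\Re k>0\}\) is connected. The identity theorem therefore gives equality as meromorphic functions on the whole right half-plane.

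\emph{Step 2: the residues agree.} Comparing Laurent coefficients at each \(k=\sqrt\lambda\), \(\lambda\in\sigma(D_1)\cup\sigma(D_2)\), we use the residue formula \eqref{eq:residue-backscattering-rigorous}. We adopt the convention \(P^{(i)}_\lambda\equiv0\) when \(\lambda\notin\sigma(D_i)\), for which the formula remains valid because both sides vanish. This yields
\[
P^{(1)}_\lambda(y_0,y_0)=P^{(2)}_\lambda(y_0,y_0)\quad\text{for all }\lambda>0.
\]
Since both spectral measures \(\mu^{(i)}_{y_0}\) are supported on \(\sigma(D_1)\cup\sigma(D_2)\), it follows that \(\mu^{(1)}_{y_0}=\mu^{(2)}_{y_0}\).

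\emph{Step 3: the heat-kernel diagonals agree.} By \eqref{eq:heat-diagonal-laplace},
\[
K_{D_i}(t,y_0,y_0)=\sum_{\lambda\in\sigma(D_i)}e^{-t\lambda}P^{(i)}_\lambda(y_0,y_0),\quad t>0.
\]
Each series converges absolutely, as established just before \eqref{eq:heat-diagonal-laplace}. We may therefore extend both sums to the common index set \(\sigma(D_1)\cup\sigma(D_2)\) by adding zero terms. Substituting the equality from Step 2 term by term gives \eqref{eq:KD1=KD2}.

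\emph{Where care is needed.} The main point is in Step 1: \(\mathcal K\) depends on both unknown spectra, so one must note that it differs from \((k_{\min},k_{\max})\) only by a finite set, which keeps accumulation points available. The second point is that a pole of one function cannot be cancelled in the other: after the functions are identified, their Laurent expansions at each \(\sqrt\lambda\) are literally the same. Otherwise the argument is a direct combination of Lemma~\ref{lem:diagonal} with \eqref{eq:heat-diagonal-laplace}.
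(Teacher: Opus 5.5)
Your proposal is correct and follows the paper's proof step for step. You apply the identity theorem to the meromorphic functions $k\mapsto u^s_i(y_0,y_0,k)$, compare the residues at $k=\sqrt{\lambda}$ (with the convention $P^{(i)}_\lambda\equiv 0$) to get $\mu^{(1)}_{y_0}=\mu^{(2)}_{y_0}$, and then invoke \eqref{eq:heat-diagonal-laplace}; your remarks on the finite exceptional set and on padding the absolutely convergent sums only make that same argument explicit.
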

\begin{proof}
Fix $y_0$. By Lemma~\ref{lem:diagonal}, for $i=1,2$ the function
$k\mapsto u^s_i(y_0,y_0,k)$ is meromorphic on the right half-plane
$\{k\in\mathbb C:\operatorname{Re}k>0\}$, and the data coincide on
$\mathcal K$, which has accumulation points in this half-plane. The
identity theorem for meromorphic functions therefore implies that
$u^s_1(y_0,y_0,\cdot)$ and $u^s_2(y_0,y_0,\cdot)$ agree as
meromorphic functions on $\{\operatorname{Re}k>0\}$. Comparing the
coefficients of $(k-\sqrt{\lambda})^{-1}$ in the two Laurent
expansions at $k=\sqrt{\lambda}$, with the convention
$P^{(i)}_\lambda\equiv0$ for $\lambda\notin\sigma(D_i)$, gives
\[
P^{(1)}_\lambda(y_0,y_0)=P^{(2)}_\lambda(y_0,y_0),
\quad \lambda\in\sigma(D_1)\cup\sigma(D_2),
\]
and consequently $\mu_{y_0}^{(1)}=\mu_{y_0}^{(2)}$, as in the proof
of Lemma~\ref{lem:visible-spectrum}(3). The equality \eqref{eq:KD1=KD2} then
follows by applying \eqref{eq:heat-diagonal-laplace} with $D=D_i$
and $y=y_0$, for $i=1,2$.
\end{proof}

The following strict monotonicity of the heat-kernel diagonal is also of independent interest.

\begin{lemma}[Strict monotonicity of the heat-kernel diagonal]
\label{lem:strict-monotonicity}
Let $D_1$ and $D_2$ be bounded connected Lipschitz domains such that $\overline{D_1}\subset D_2$. Then we have
\[
K_{D_1}(t,y,y)<K_{D_2}(t,y,y),\quad t>0,\,y\in D_1.
\]
\end{lemma}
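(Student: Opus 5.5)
Fix $y\in D_1$ and set $w(t,x):=K_{D_2}(t,x,y)-K_{D_1}(t,x,y)$ for $t>0$, $x\in\overline{D_1}$. The plan has three steps: show $w\ge0$, show $w\not\equiv0$, and then use the strong maximum principle to get $w>0$ in the interior, in particular at $x=y$.

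\emph{Step 1: nonnegativity.} The weak inequality $w\ge0$ is domain monotonicity \cite{vandenBerg1989}, already used in Lemma~\ref{lem:heat-kernel-boundary}(i).

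\emph{Step 2: $w$ solves the heat equation with boundary values that are not identically zero.} I would show that $w$ solves $(\partial_t-\Delta_x)w=0$ in $(0,\infty)\times D_1$. The smoothness argument in the proof of Lemma~\ref{lem:heat-positivity} applies to each kernel separately, so both are smooth caloric functions in $x$ there. The singularities at $t\to0^+$ cancel, since both kernels have the same initial datum $\delta_y$. Concretely, for $f\in C_c^\infty(D_1)$ both $e^{-tA_1}f$ and $e^{-tA_2}f$ tend to $f$. Alternatively, $w(t,\cdot)\to0$ locally uniformly as $t\to0^+$, which follows from the Gaussian bound and the standard principle of not feeling the boundary. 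By Lemma~\ref{lem:heat-kernel-boundary}(iii)--(iv), $w$ is continuous on $(0,\infty)\times\overline{D_1}$. On $\partial D_1$ we have $K_{D_1}=0$, so $w=K_{D_2}(t,x,y)$ there. Since $\partial D_1\subset D_2$, Lemma~\ref{lem:heat-positivity} gives $w(t,x)>0$ for every $x\in\partial D_1$ and $t>0$. This is exactly where the hypothesis $\overline{D_1}\subset D_2$ enters.

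\emph{Step 3: strong maximum principle.} Suppose $w(t_0,x_0)=0$ for some $x_0\in D_1$. Then $0$ is an interior minimum of a nonnegative caloric function on the cylinder $D_1\times(\tau,t_0]$. By the strong maximum principle \cite[Section~7.1.4]{Evans2010}, as in the proof of Lemma~\ref{lem:heat-positivity}, $w$ vanishes on $D_1\times(\tau,t_0]$. By continuity up to $\partial D_1$ it then vanishes on the lateral boundary, contradicting Step~2. Hence $w(t,x)>0$ for all $t>0$ and $x\in D_1$, and taking $x=y$ gives the claim.

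\emph{Main obstacle.} The hard part is justifying that $w$ is a genuine classical caloric function near $t=0$ with the correct behaviour. This is needed so that the maximum principle can be applied on $D_1\times[\tau,t_0]$, or so that the weak inequality $w\ge0$ can be obtained intrinsically rather than quoted.

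To avoid the $t\to0$ issue, I would work only on $[\tau,t_0]$ with $\tau>0$ fixed, where everything is continuous up to $\overline{D_1}$. On such a slab only nonnegativity of $w$ (Step~1) and its positivity on $\partial D_1$ are needed, and the argument closes without any analysis at $t=0$.

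A more elementary alternative route to strictness uses the eigenfunction expansion \eqref{eq:heat-diagonal-laplace}. That route seems harder to make strict, so I would not take it.
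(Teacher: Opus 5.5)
Your proposal is correct and follows essentially the same route as the paper: domain monotonicity gives $w\ge0$, and Lemma~\ref{lem:heat-positivity} together with $\partial D_1\subset D_2$ gives $w>0$ on $\partial D_1$. Continuity up to $\overline{D_1}$ comes from Lemma~\ref{lem:heat-kernel-boundary}(iii)--(iv), and the strong maximum principle on a truncated cylinder $D_1\times(\tau,t_0]$, as in Lemma~\ref{lem:heat-positivity}, then rules out an interior zero. As you note yourself, the discussion of the behaviour as $t\to0^+$ is unnecessary, since only slabs with $\tau>0$ are used.
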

\begin{proof}
The standard domain monotonicity of the Dirichlet heat kernel gives \cite{vandenBerg1989}
\[
K_{D_1}(t,x,y)\le K_{D_2}(t,x,y), \quad t>0, \quad x,y\in D_1.
\]
Fix $y\in D_1$ and set
\[
w(t,x):=K_{D_2}(t,x,y)-K_{D_1}(t,x,y), \quad t>0, \quad x\in D_1.
\]
Then $w\ge0$ on $(0,\infty)\times D_1$ and $(\partial_t-\Delta_x)w=0$
there. Since $\overline{D_1}\subset D_2$ and $D_2$ is open, we have
$\partial D_1\subset D_2$. Combining
Lemma~\ref{lem:heat-kernel-boundary}(iii) with
Lemma~\ref{lem:heat-positivity} yields
\[
K_{D_1}(t,x,y)=0<K_{D_2}(t,x,y), \quad t>0, \quad x\in\partial D_1,
\]
and therefore $w(t,x)>0$ on $\partial D_1$. Moreover, by Lemma~\ref{lem:heat-kernel-boundary}(iv) applied with
$D=D_1$ and with $D=D_2$, the function $w$ is continuous on
$(0,\infty)\times\overline{D_1}$. Following the proof of Lemma~\ref{lem:heat-positivity} yields $w(t,x)>0$ for all $t>0$ and all $x\in D_1$, and taking $x=y$ gives
the claim.
\end{proof}

From Lemmas~\ref{lem:visible-spectrum} and \ref{lem:point-heat-diagonal}, we deduce that the multi-frequency backscattering data on $\M$ determine
\begin{enumerate}
\item[(i)] the visible eigenvalue set $V(D)$;
\item[(ii)] the diagonal values $\{P_\lambda(y,y)\}_{\lambda\in V}$ of the
spectral projection kernels, and hence the pointwise spectral measures
$\mu_y$, $y\in\M$;
\item[(iii)] the diagonal of the Dirichlet heat kernel
$K_{D}(t,y,y)$ for all $t>0$, $y\in\M$.
\end{enumerate}
The present argument yields only the diagonal quantities
$P_\lambda(y,y)$ on $\M$, not the multiplicities or restriction ranks, which is the source of the conditional hypotheses in Theorem~\ref{thm:main-unique}.

By Theorem~\ref{thm:boundary-vanishing} applied to $D_1$ and $D_2$ separately, $P_\lambda^{(i)}(y,y)$ is real-analytic in $D_i$ (hence in $D^\ast \subset D_1 \cap D_2$) for $i=1,2$.
However, since $\M$ is a one-dimensional curve and $D^\ast$ is a two-dimensional open set, the identity theorem for real-analytic functions cannot extend the equality from $\M$ to $D^\ast$: the theorem requires vanishing on a nonempty open set, whereas we only know $P_\lambda^{(1)}(y,y) = P_\lambda^{(2)}(y,y)$ on the curve $\M$, and only for visible~$\lambda$. Moreover, Lemma~\ref{lem:visible-spectrum} only guarantees coincidence of visible eigenvalues; invisible eigenvalues may differ between $D_1$ and $D_2$.

To overcome this difficulty, we use the interior domain $\Omega$
enclosed by $\M$. In the remainder of this subsection, $D$ stands
for either $D_1$ or $D_2$.
We shall also need the Weyl law (see,
e.g., \cite{SafarovVassiliev1997})
\begin{equation} \label{eq:weyl-law}
N_D(E) = \frac{|D|}{4\pi} E + o(E) \quad \text{as } E \to \infty,
\end{equation} 
where $N_D(E)$ denotes the number of Dirichlet eigenvalues (counted with
multiplicity) not exceeding $E$. For each $\lambda\in\sigma(D)$, define the restriction operator
\[
R_\lambda:E_\lambda(D):=\ker(-\Delta_D-\lambda)\longrightarrow C(\M),
\quad
R_\lambda\phi:=\phi|_{\M}.
\]
Let
\[
N_{\mathrm{van},D}(E)
:=
\sum_{\substack{\lambda\in\sigma(D)\\ \lambda\le E}}
\dim\ker R_\lambda
\]
denote the total number of linearly independent eigenfunctions of $D$ with eigenvalue at most $E$ whose restrictions to $\M$ vanish.

\begin{lemma}[Restriction kernels and the interior Dirichlet spectrum] \label{lem:invisible-interior}
The restriction map $I_{\Omega}: \ker R_\lambda \longrightarrow \ker(-\Delta_\Omega-\lambda)$
is injective. In particular, $\dim\ker R_\lambda \le \dim\ker(-\Delta_\Omega-\lambda)$.
Moreover,
\begin{equation} \label{eq:vanishing-bound}
N_{\mathrm{van},D}(E)\le N_\Omega(E)
\quad\text{for all }E>0.
\end{equation}
\end{lemma}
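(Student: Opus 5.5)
The plan is to show that the restriction $\phi|_{\Omega}$ of any $\phi\in\ker R_\lambda$ is a Dirichlet eigenfunction of $\Omega$ with eigenvalue $\lambda$. The map $I_\Omega$ is then well defined and clearly linear, and injectivity comes from unique continuation. The counting bound follows by summing over eigenvalues.

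\emph{Step 1: well-definedness.} Fix $\phi\in\ker R_\lambda$, and note $\phi$ is real-analytic in $D$. Since $\overline\Omega\subset D$, we have $\phi\in C^\infty(\overline\Omega)$ in the sense that $\phi$ is smooth on a neighbourhood of $\overline\Omega$. It satisfies $-\Delta\phi=\lambda\phi$ in $\Omega$ and $\phi|_{\partial\Omega}=\phi|_{\M}=0$. It remains to check that $\psi:=\phi|_\Omega$ lies in $H^1_0(\Omega)$.

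This is the only point needing care, and it is where I expect the main (mild) obstacle. The function $\psi$ belongs to $H^1(\Omega)$, is continuous on $\overline\Omega$, and vanishes on $\partial\Omega$. For a Lipschitz domain, a function in $H^1(\Omega)\cap C(\overline\Omega)$ with zero trace belongs to $H^1_0(\Omega)$, since the trace of a continuous $H^1$ function is its boundary restriction.

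If one prefers to avoid trace theory, a direct approximation works. Take $\psi_\varepsilon:=\operatorname{sign}(\psi)(|\psi|-\varepsilon)_+$. It is compactly supported in $\Omega$ because $\psi$ is uniformly continuous and vanishes on $\partial\Omega$. It converges to $\psi$ in $H^1(\Omega)$ as $\varepsilon\to0$, since $\nabla\psi_\varepsilon=\nabla\psi\,\mathbf 1_{\{|\psi|>\varepsilon\}}$ and $\nabla\psi=0$ a.e. on $\{\psi=0\}$. Hence $\psi\in H^1_0(\Omega)$ and $\Delta\psi=-\lambda\psi\in L^2(\Omega)$, so $\psi\in\mathcal D(A_\Omega)$ and $\psi\in\ker(-\Delta_\Omega-\lambda)$.

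\emph{Step 2: injectivity.} Suppose $I_\Omega\phi=0$, i.e.\ $\phi\equiv0$ on the nonempty open set $\Omega\subset D$. Since $\phi$ is real-analytic on $D$ (Theorem~\ref{thm:boundary-vanishing}) and $D$ is connected, the identity theorem gives $\phi\equiv0$ on $D$. So $I_\Omega$ is an injective linear map between finite-dimensional spaces, and
\[
\dim\ker R_\lambda\le\dim\ker(-\Delta_\Omega-\lambda),
\]
where the right side is interpreted as $0$ if $\lambda\notin\sigma(\Omega)$. In that case injectivity forces $\ker R_\lambda=\{0\}$.

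\emph{Step 3: counting.} Summing the inequality of Step 2 over $\lambda\in\sigma(D)$ with $\lambda\le E$ gives
\[
N_{\mathrm{van},D}(E)\le\sum_{\lambda\in\sigma(D),\,\lambda\le E}\dim\ker(-\Delta_\Omega-\lambda)\le\sum_{\lambda\in\sigma(\Omega),\,\lambda\le E}\dim\ker(-\Delta_\Omega-\lambda)=N_\Omega(E).
\]
The second inequality holds because the summands are nonnegative, and only eigenvalues of $\Omega$ contribute nonzero terms. This is \eqref{eq:vanishing-bound}.
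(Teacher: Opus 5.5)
Your proposal is correct and follows the paper's own proof: the restriction of $\phi\in\ker R_\lambda$ to $\Omega$ has zero trace on $\M=\partial\Omega$ and hence lies in $H^1_0(\Omega)$; injectivity follows from unique continuation, which your identity-theorem argument for the real-analytic eigenfunction realizes; and the counting bound is obtained by summing over $\lambda\le E$. Your truncation argument $\psi_\varepsilon=\operatorname{sign}(\psi)(|\psi|-\varepsilon)_+$ is a valid alternative to the trace step. It does not need $\Omega$ to be Lipschitz, and for complex-valued $\phi$ you would apply it to the real and imaginary parts separately.
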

\begin{proof}
Fix $\lambda\in\sigma(D)$, and let $\phi\in\ker R_\lambda$, so that $\phi|_{\M}=0$. Since $\phi$ satisfies
$(\Delta+\lambda)\phi=0$ in $D$, its restriction $\phi|_\Omega$ satisfies the same equation in $\Omega$ and vanishes on $\partial\Omega=\M$.
Because $\Omega$ is Lipschitz and $\phi\in C(\overline D)$ by Theorem~\ref{thm:boundary-vanishing}, the trace of $\phi|_\Omega$ on $\partial\Omega$ is zero. Hence
$\phi|_\Omega\in H^1_0(\Omega)$ solves $(\Delta+\lambda)\phi|_\Omega=0$ in $\Omega$. If $\phi|_\Omega=0$ in $\Omega$, then unique continuation for elliptic equations implies
$\phi\equiv0$ in $D$. Thus the restriction map $I_{\Omega}: \ker R_\lambda \longrightarrow \ker(-\Delta_\Omega-\lambda)$
is well defined and injective. Therefore $\dim\ker R_\lambda \le \dim\ker(-\Delta_\Omega-\lambda)$.
Summing over all $\lambda\le E$ yields \eqref{eq:vanishing-bound}.
\end{proof}

\subsection{Curve setting: spectral comparison and area estimate}

\begin{theorem}[Spectral comparison and area estimate]
\label{thm:main-unique}
Assume that
\[
u^s_1(y, y, k) = u^s_2(y, y, k),
\quad y \in \M, \quad k \in \mathcal{K}.
\] Then:

\begin{itemize}
\item[(a)] \textbf{Spectral-set comparison.} The symmetric difference
of the Dirichlet spectra satisfies
\[
\sigma(D_1)\triangle\sigma(D_2)\subset \sigma(\Omega).
\]
In particular, all eigenvalues of $D_1$ and $D_2$ outside
$\sigma(\Omega)$ coincide.

\item[(b1)] \textbf{Area estimate under multiplicity matching.}
If each common visible eigenvalue has the same total multiplicity in
$D_1$ and $D_2$, i.e.,
\[
m_\lambda(D_1)=m_\lambda(D_2)
\quad\text{for every }\lambda\in V(D_1)=V(D_2),
\]
then
\[
\bigl||D_1|-|D_2|\bigr|\le |\Omega|.
\]

\item[(b2)] \textbf{Area estimate under rank matching.}
If the restriction maps have equal ranks at each common visible
eigenvalue, i.e.,
\[
\operatorname{rank} R_\lambda^{(1)}=\operatorname{rank} R_\lambda^{(2)}
\quad\text{for every }\lambda\in V(D_1)=V(D_2),
\]
then the same area estimate holds:
\[
\bigl||D_1|-|D_2|\bigr|\le |\Omega|.
\]
\end{itemize}
\end{theorem}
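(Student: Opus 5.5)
For part (a), I will use Lemma~\ref{lem:visible-spectrum}(1) together with Lemma~\ref{lem:invisible-interior}. Let $\lambda\in\sigma(D_1)\setminus\sigma(D_2)$.

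If $\lambda$ were visible for $D_1$, then $\lambda\in V(D_1)=V(D_2)\subset\sigma(D_2)$, which is a contradiction. Hence $\lambda$ is invisible for $D_1$. By Definition~\ref{def:visible}, every eigenfunction in $E_\lambda(D_1)$ then vanishes on $\M$, so $\ker R^{(1)}_\lambda=E_\lambda(D_1)\neq\{0\}$. Lemma~\ref{lem:invisible-interior} injects this kernel into $\ker(-\Delta_\Omega-\lambda)$, so $\lambda\in\sigma(\Omega)$. The case $\lambda\in\sigma(D_2)\setminus\sigma(D_1)$ is symmetric, which proves (a).

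For (b1) and (b2), the plan is to compare the counting functions $N_{D_1}(E)$ and $N_{D_2}(E)$ and then invoke the Weyl law \eqref{eq:weyl-law}. Write
\[
N_{D_i}(E)=\sum_{\lambda\in V,\ \lambda\le E} m_\lambda(D_i)+\sum_{\lambda\in\sigma(D_i)\setminus V,\ \lambda\le E} m_\lambda(D_i),
\]
where $V=V(D_1)=V(D_2)$.

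Under (b1), the visible sums coincide. For an invisible $\lambda$, $m_\lambda(D_i)=\dim\ker R^{(i)}_\lambda$, so each invisible sum is at most $N_{\mathrm{van},D_i}(E)\le N_\Omega(E)$ by \eqref{eq:vanishing-bound}. Since both invisible sums are nonnegative, $|N_{D_1}(E)-N_{D_2}(E)|\le N_\Omega(E)$.

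Under (b2), I instead use the decomposition $m_\lambda(D_i)=\operatorname{rank}R^{(i)}_\lambda+\dim\ker R^{(i)}_\lambda$, valid for every $\lambda$ (rank zero at invisible eigenvalues, with $m_\lambda=0$ off the spectrum). This gives
\[
N_{D_i}(E)=\sum_{\lambda\in V,\ \lambda\le E}\operatorname{rank}R^{(i)}_\lambda+N_{\mathrm{van},D_i}(E).
\]
The rank sums agree by hypothesis, and the difference is again $|N_{\mathrm{van},D_1}-N_{\mathrm{van},D_2}|\le N_\Omega(E)$.

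In both cases, I divide by $E$ and let $E\to\infty$. The Weyl law \eqref{eq:weyl-law} for $D_1$, $D_2$ and $\Omega$ (all bounded Lipschitz) gives $\frac{1}{4\pi}\bigl||D_1|-|D_2|\bigr|\le\frac{1}{4\pi}|\Omega|$, which is the claimed estimate.

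The one step needing care is checking that the quantity bounded in (b1) is exactly an invisible-eigenvalue sum dominated by $N_{\mathrm{van}}$. It also matters that the difference of two nonnegative quantities, each at most $N_\Omega$, is at most $N_\Omega$ in absolute value, rather than $2N_\Omega$. Otherwise the argument is routine.
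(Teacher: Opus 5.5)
Your proposal is correct and follows essentially the same route as the paper. Part (a) combines $V(D_1)=V(D_2)$ with the injection of $\ker R_\lambda$ into $\ker(-\Delta_\Omega-\lambda)$, and (b1)/(b2) use the same counting-function decompositions (invisible multiplicities bounded by $N_{\mathrm{van},D_i}(E)$, respectively the rank-plus-kernel split) to get $|N_{D_1}(E)-N_{D_2}(E)|\le N_\Omega(E)$, followed by Weyl's law.
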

\begin{proof}
We prove parts \textup{(a)}, \textup{(b1)}, and \textup{(b2)} separately.

\textbf{Proof of (a): Spectral set comparison.}
By Lemma~\ref{lem:visible-spectrum}, the visible eigenvalues of $D_1$ and $D_2$ coincide as sets: $V(D_1) = V(D_2) =: V$. By Lemma~\ref{lem:invisible-interior}, every invisible eigenvalue of $D_i$ is a Dirichlet eigenvalue of $\Omega$, i.e., $I(D_i) \subset \sigma(\Omega)$ for $i = 1, 2$, where $I(D_i) = \sigma(D_i) \setminus V$ is the set of invisible eigenvalues.

Since $\sigma(D_i) = V \cup I(D_i)$ (disjoint union), we have
\[
\sigma(D_1) \triangle \sigma(D_2) = (V \cup I_1) \triangle (V \cup I_2) \subset I_1 \cup I_2 \subset \sigma(\Omega),
\]
where $I_i = I(D_i)$. If $\lambda \in \sigma(D_1) \setminus \sigma(D_2)$, then $\lambda$ cannot be visible; otherwise $\lambda \in V(D_1) = V(D_2) \subset \sigma(D_2)$, a contradiction. Thus $\lambda \in I(D_1) \subset \sigma(\Omega)$. The case $\lambda \in \sigma(D_2) \setminus \sigma(D_1)$ is symmetric. This proves (a).

\textbf{Proof of (b1): Area estimate under multiplicity matching.}
For \(i=1,2\), write
\[
N_{D_i}(E)
= \sum_{\substack{\lambda\in V\\ \lambda\le E}} m_i(\lambda)
  + N_{\mathrm{inv},D_i}(E),
\]
where \(V=V(D_1)=V(D_2)\) by part~\textup{(a)}, \(m_i(\lambda)\) denotes the multiplicity of the visible eigenvalue \(\lambda\) in \(D_i\), and \(N_{\mathrm{inv},D_i}(E)\) counts the fully invisible eigenvalues of \(D_i\) up to energy \(E\) with multiplicity. By assumption, \(m_1(\lambda)=m_2(\lambda)\) for every \(\lambda\in V\). Hence
\[
N_{D_1}(E)-N_{D_2}(E)
=N_{\mathrm{inv},D_1}(E)-N_{\mathrm{inv},D_2}(E).
\]
Lemma~\ref{lem:invisible-interior} gives the stronger estimate
\[
N_{\mathrm{van},D_i}(E)\le N_\Omega(E), \quad i=1,2,
\]
where \(N_{\mathrm{van},D_i}(E)\) counts all eigenfunctions of \(D_i\) with eigenvalue at most \(E\) whose restrictions to \(\M\) vanish, including those belonging to partially visible eigenspaces. Since every fully invisible eigenvalue contributes its full multiplicity to \(N_{\mathrm{van},D_i}(E)\), we have
\[
0\le N_{\mathrm{inv},D_i}(E)\le N_{\mathrm{van},D_i}(E)\le N_\Omega(E), \quad i=1,2,
\]
and therefore
\[
|N_{D_1}(E)-N_{D_2}(E)|\le N_\Omega(E).
\]
Dividing by \(E\) and applying the Weyl law \eqref{eq:weyl-law} to both sides yields
\[
\frac{\bigl||D_1| - |D_2|\bigr|}{4\pi} + o(1) \leq \frac{|\Omega|}{4\pi} + o(1) \quad \text{as } E \to \infty.
\]
Taking $E \to \infty$ yields
\[
\bigl||D_1| - |D_2|\bigr| \leq |\Omega|,
\]
which completes the proof of (b1).

\textbf{Proof of \textup{(b2)}: Area estimate under rank matching.}
For each eigenvalue \(\lambda\) we have
\[
m_i(\lambda) = \operatorname{rank}R_\lambda^{(i)} + \dim\ker R_\lambda^{(i)},
\]
and \(\operatorname{rank}R_\lambda^{(i)}=0\) for \(\lambda\notin V\).
Summing over \(\lambda\le E\) and separating the visible eigenvalues
from the vanishing directions gives the decomposition
\[
N_{D_i}(E)
=
\sum_{\substack{\lambda\in V\\ \lambda\le E}}
\operatorname{rank} R_\lambda^{(i)}
+
N_{\mathrm{van},D_i}(E),
\]
where the first sum counts the visible directions (the rank of the
restriction map) and the second term counts all vanishing directions,
including both fully invisible eigenvalues and the kernel directions
of partially visible eigenspaces.

Under the rank-matching assumption,
\[
\operatorname{rank}R_\lambda^{(1)}=\operatorname{rank}R_\lambda^{(2)}
\quad\text{for every }\lambda\in V(D_1)=V(D_2),
\]
the visible-rank sums cancel when taking the difference:
\[
N_{D_1}(E)-N_{D_2}(E)
=
N_{\mathrm{van},D_1}(E)-N_{\mathrm{van},D_2}(E).
\]
By Lemma~\ref{lem:invisible-interior},
\(0\le N_{\mathrm{van},D_i}(E)\le N_\Omega(E)\), so
\[
|N_{D_1}(E)-N_{D_2}(E)|
\le N_\Omega(E).
\]
The proof is completed by using Weyl asymptotics, exactly as in \textup{(b1)}.
\end{proof}

\begin{remark}[Multiplicity gap and sufficient conditions] \label{rmk:simplicity}
Theorem~\ref{thm:main-unique} is asymmetric: part~(a) is unconditional, while the area estimates in (b1) and (b2) require additional spectral
hypotheses beyond what the diagonal data determine. The two conditions, multiplicity matching and restriction-rank matching, are
logically independent: neither implies the other in general. A common
sufficient special case is when every visible eigenvalue is simple in both
domains: if $m_\lambda(D_i)=1$ for all $\lambda\in V$, then both (b1) and
(b2) hold automatically.
\end{remark}

\subsection{Open-set setting}
In the open-set setting, we fix the wavenumber $k_0>0$ such that $k_0^2\notin\sigma(D)$ and show uniqueness of the inverse problem from
the backscattering data in a nonempty open set $\mathcal U$. 

An important tool is a quantitative nondegeneracy property
of harmonic measure near the boundary
(Lemma~\ref{lem:harmonic-measure-nondegenerate}), which we deduce from
the theory of nontangentially accessible (NTA) domains in the sense of
Jerison and Kenig \cite{JerisonKenig1982}. A bounded Lipschitz domain
is an NTA domain \cite[Section~3]{JerisonKenig1982}; in particular, it
enjoys the defining properties \textup{(NTA1)} and \textup{(NTA2)}
below. We also use the consequence \textup{(NTA3)}. All constants depend only on
the Lipschitz character of $D$.
\begin{enumerate}
\item[(NTA1)] \emph{Corkscrew condition}: there exist $a\in(0,1)$,
$M_1>1$ and $\delta_0>0$ such that for every $p\in\partial D$
and every $0<\rho<\delta_0$ there is a point
$A_\rho(p)\in D\cap B(p,M_1\rho)$ with
$\operatorname{dist}(A_\rho(p),\partial D)\ge a\rho$.
\item[(NTA2)] \emph{Bounded-length Harnack chains}: if $x_1,x_2\in D$
satisfy $\operatorname{dist}(x_i,\partial D)\ge\alpha|x_1-x_2|$ for
some $\alpha>0$, then $x_1$ and $x_2$ can be joined by a Harnack
chain of balls of length bounded in terms of $\alpha$ and the NTA
character of $D$. 
\item[(NTA3)] \emph{Surface-ball nondegeneracy of harmonic measure}:
there are constants $M_2>1$, $c>0$, and $\delta_1>0$ such that
for every $p\in\partial D$, every $0<\rho<\delta_1$, and any
interior corkscrew point $A_\rho(p)$ as in \textup{(NTA1)},
\[
\omega^{A_\rho(p)}\bigl(\Sigma(p,M_2\rho)\bigr)\ge c,
\qquad
\Sigma(p,r):=\partial D\cap B(p,r),
\]
where $\omega^z$ denotes harmonic measure for $D$ with pole at $z$.
\end{enumerate}

\begin{lemma}[Nondegeneracy of harmonic measure]
\label{lem:harmonic-measure-nondegenerate}
There exist constants $M>1$, $c_0>0$, and $\eta>0$, depending only on
the Lipschitz character of $D$, such that for every $y\in D$ with
\[
\delta(y):=\operatorname{dist}(y,\partial D)<\eta,
\]
and any $p\in\partial D$ with $|y-p|=\delta(y)$, we have
\[
\omega^y(\Sigma(p,M\delta(y)))\ge c_0.
\]
\end{lemma}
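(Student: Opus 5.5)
The plan is to deduce the lemma directly from (NTA3), using the maximum principle / monotonicity of harmonic measure and the Harnack chain condition (NTA2). Fix $y\in D$ with $\delta(y)<\eta$ and a nearest boundary point $p$, so $|y-p|=\delta(y)=:\rho$. Then $B(y,\rho)\subset D$. We choose $\eta$ small enough, say $\eta<\min(\delta_0,\delta_1)/M_1$, so that (NTA1) and (NTA3) are available at every scale used below.

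\emph{Step 1.} Use (NTA1) at the scale $\rho' := \rho$ at the point $p$ to get a corkscrew point $A:=A_{\rho}(p)\in D\cap B(p,M_1\rho)$ with $\operatorname{dist}(A,\partial D)\ge a\rho$. By (NTA3),
\[
\omega^{A}\bigl(\Sigma(p,M_2\rho)\bigr)\ge c.
\]

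\emph{Step 2.} Compare $y$ and $A$ by a Harnack chain. We have $\operatorname{dist}(y,\partial D)=\rho$ and $\operatorname{dist}(A,\partial D)\ge a\rho$. Also
\[
|y-A|\le |y-p|+|p-A|\le (1+M_1)\rho .
\]
Hence both points satisfy the hypothesis of (NTA2) with $\alpha:=a/(1+M_1)$, and they can be joined by a Harnack chain whose length $L$ depends only on the Lipschitz character. The function $z\mapsto\omega^{z}(\Sigma(p,M_2\rho))$ is positive and harmonic in $D$. Applying the Harnack inequality along the chain gives
\[
\omega^{y}\bigl(\Sigma(p,M_2\rho)\bigr)\ge C_H^{-L}\,\omega^{A}\bigl(\Sigma(p,M_2\rho)\bigr)\ge C_H^{-L}c=:c_0 .
\]

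\emph{Step 3.} Conclude with $M:=M_2$, $c_0$ as above, and $\eta$ as chosen. If one prefers a larger surface ball, monotonicity of $\omega^y$ in the set allows any $M\ge M_2$.

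The main point needing care is Step 2. The Harnack chain must stay inside $D$ with balls whose radii are comparable to the distance to $\partial D$; this is exactly what (NTA2) guarantees, so the chain length is uniform in $\rho$. One must also check that the harmonic function is evaluated only in the interior, which holds since $\omega^{z}(E)$ is harmonic in $z\in D$ for any Borel $E\subset\partial D$.

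A secondary check is that the scales stay below $\delta_0,\delta_1$, which is handled by the choice of $\eta$. If (NTA3) is only stated for particular corkscrew points, any corkscrew point works by the same Harnack argument, so no generality is lost.
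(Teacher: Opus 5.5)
Your proposal is correct and follows essentially the same route as the paper: a corkscrew point at scale $\delta(y)$ from (NTA1), Harnack-chain comparison with $y$ via (NTA2) with $\alpha=a/(1+M_1)$, and the (NTA3) lower bound transferred from the corkscrew point to $y$. The only differences are cosmetic: you take $M=M_2$ and a more conservative $\eta<\min\{\delta_0,\delta_1\}/M_1$. The paper instead takes $M=\max\{M_1,M_2\}$ and $\eta=\min\{\delta_0,\delta_1\}$.
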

\begin{proof}
Set $M:=\max\{M_1,M_2\}>1$ and $\eta:=\min\{\delta_0,\delta_1\}$, so that
\textup{(NTA1)}--\textup{(NTA3)} hold with this common $M$ and with every
scale $\rho<\eta$. Set $r:=\delta(y)<\eta$ and choose $p\in\partial D$
with $|y-p|=r$. By
\textup{(NTA1)} there is a corkscrew point
$A_r(p)\in D\cap B(p,Mr)$ with
$\operatorname{dist}(A_r(p),\partial D)\ge ar$. Put
$d:=|y-A_r(p)|$; then $d\le(M+1)r$, so
\[
\operatorname{dist}(y,\partial D)=r\ge\frac{d}{M+1},
\qquad
\operatorname{dist}(A_r(p),\partial D)\ge ar\ge\frac{a}{M+1}\,d
\]
Applying \textup{(NTA2)} with $\alpha=\frac{a}{M+1}$, with the help of the interior Harnack inequality, we derive that
\[
Cu(A_r(p))\ge u(y)\ge C^{-1}u(A_r(p))
\]
for every positive harmonic function $u$
in $D$, with $C\ge1$ depending only on the Lipschitz character of
$D$. The function $u(z):=\omega^z(\Sigma(p,Mr))$ is positive and
harmonic in $D$ (see, e.g., \cite[Chapter~1]{Kenig1994}); since
$M\ge M_2$ we have $\Sigma(p,M_2r)\subset\Sigma(p,Mr)$, and
\textup{(NTA3)} gives
\[
u(A_r(p))\ge\omega^{A_r(p)}(\Sigma(p,M_2r))\ge c.
\]
Hence
\[
\omega^y(\Sigma(p,Mr))=u(y)\ge C^{-1}u(A_r(p))\ge C^{-1}c=:c_0,
\]
with $c_0>0$ depending only on the Lipschitz character of $D$.
\end{proof}

\begin{lemma}[Boundary behavior of the harmonic Robin function]
\label{lem:harmonic-robin-boundary}
Let $G_D(\cdot, y, 0)$ denote the Dirichlet Green function of $-\Delta$ in $D$. Define
\[
R_D^{(0)}(y):=\lim_{x\to y}\Bigl(G_D(x,y,0)+\frac{1}{2\pi}\log|x-y|\Bigr),
\quad y\in D.
\]
Then
\[
R_D^{(0)}(y)\to -\infty
\quad\text{as } y\to\partial D,\ y\in D.
\]
\end{lemma}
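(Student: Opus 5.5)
The plan is to write the regular part as a harmonic function and bound it using harmonic measure. For fixed $y\in D$, set
\[
h_y(x):=G_D(x,y,0)+\frac{1}{2\pi}\log|x-y| .
\]
This function is harmonic in $D$, since the logarithmic singularity cancels. It is continuous on $\overline D$ with boundary values $h_y(\xi)=\frac{1}{2\pi}\log|\xi-y|$ for $\xi\in\partial D$. Here we use that $G_D(\cdot,y,0)$ vanishes continuously on $\partial D$, because every boundary point of a Lipschitz domain is Wiener regular, exactly as in the proof of Lemma~\ref{lem:heat-kernel-boundary}(iii).

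Hence $h_y$ is the solution of the Dirichlet problem with continuous data $\xi\mapsto\frac{1}{2\pi}\log|\xi-y|$. It can therefore be written through harmonic measure:
\[
R_D^{(0)}(y)=h_y(y)=\frac{1}{2\pi}\int_{\partial D}\log|\xi-y|\,d\omega^y(\xi).
\]
I will justify this representation by the standard Perron--Wiener--Brelot theory for regular domains (see \cite{Kenig1994}).

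Now fix $y$ with $\delta(y)<\eta$, and choose $p\in\partial D$ with $|y-p|=\delta(y)$. Split $\partial D$ into the surface ball $\Sigma:=\Sigma(p,M\delta(y))$ and its complement.
\begin{itemize}
\item On $\Sigma$ we have $|\xi-y|\le|\xi-p|+|p-y|\le(M+1)\delta(y)$, so the integrand is at most $\frac{1}{2\pi}\log((M+1)\delta(y))$.
\item On the complement, $|\xi-y|\le\operatorname{diam}(D)$, so the integrand is at most $\frac{1}{2\pi}\log\operatorname{diam}(D)$.
\end{itemize}
Since $\omega^y$ is a probability measure, these two bounds give
\[
R_D^{(0)}(y)\le \frac{1}{2\pi}\Bigl[\omega^y(\Sigma)\log\bigl((M+1)\delta(y)\bigr)+\bigl(1-\omega^y(\Sigma)\bigr)\log^+\operatorname{diam}(D)\Bigr].
\]

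For $\delta(y)$ small enough that $(M+1)\delta(y)<1$, the first logarithm is negative. Lemma~\ref{lem:harmonic-measure-nondegenerate} gives $\omega^y(\Sigma)\ge c_0$. Therefore
\[
R_D^{(0)}(y)\le \frac{c_0}{2\pi}\log\bigl((M+1)\delta(y)\bigr)+\frac{1}{2\pi}\log^+\operatorname{diam}(D),
\]
and the right-hand side tends to $-\infty$ as $\delta(y)\to0$.

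The main obstacle is the lower bound on harmonic measure near the boundary. Without it, $\omega^y$ could in principle put little mass near $p$, and the logarithmic blow-up would be lost. That lower bound is supplied by Lemma~\ref{lem:harmonic-measure-nondegenerate}, so the only remaining care is the justification of the boundary continuity of $h_y$ and of the harmonic-measure representation on a Lipschitz domain.
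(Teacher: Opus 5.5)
Your proposal is correct and follows the paper's proof essentially step for step. Both arguments write $R_D^{(0)}(y)$ as the harmonic-measure integral of $\frac{1}{2\pi}\log|\xi-y|$, split $\partial D$ into $\Sigma(p,M\delta(y))$ and its complement, and apply Lemma~\ref{lem:harmonic-measure-nondegenerate}. The only cosmetic difference is in the final inequality: you use $\log^+\operatorname{diam}(D)$ together with $(M+1)\delta(y)<1$, while the paper takes $\delta(y)<\operatorname{diam}(D)/(M+1)$ and notes that the convex combination decreases as $\omega^y(\Sigma)$ increases; both versions are valid.
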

\begin{proof}
Fix $y\in D$. For $x\neq y$, set
\[
H_y(x):=G_D(x,y,0)+\frac{1}{2\pi}\log|x-y|.
\]
The logarithmic singularities cancel, so $H_y$ extends harmonically across $x=y$, and
\[
R_D^{(0)}(y)=H_y(y).
\]
For fixed $y\in D$, the boundary function
$\xi\longmapsto \frac{1}{2\pi}\log|\xi-y|$
is continuous on \(\partial D\), and the Dirichlet Green function \(G_D(\cdot,y,0)\) vanishes continuously on \(\partial D\). Hence the boundary trace of \(H_y\) is
\[
H_y(\xi)=\frac{1}{2\pi}\log|\xi-y|,
\quad \xi\in\partial D.
\]
Therefore, the harmonic extension is represented by harmonic measure, and
\begin{equation}\label{R_D^0-representation}
    R_D^{(0)}(y)=\int_{\partial D}\frac{1}{2\pi}\log|\xi-y|\,d\omega^y(\xi).
\end{equation}

Let $\delta(y)=\operatorname{dist}(y,\partial D)$, and choose $p\in\partial D$ such that $|y-p|=\delta(y)$. Let $M,c_0,\eta$ be as in Lemma~\ref{lem:harmonic-measure-nondegenerate}. Then we have
\[
|\xi-y|\le (M+1)\delta(y),
\]
and therefore
\[
\frac{1}{2\pi}\log|\xi-y|
\le \frac{1}{2\pi}\log((M+1)\delta(y))\quad\text{for all }\xi\in\Sigma(p,M\delta).
\]
On the other hand,
\[
|\xi-y|\le \operatorname{diam}(D)\quad\text{for all }\xi\in\partial D.
\]
Splitting the integral \eqref{R_D^0-representation} over $\Sigma(p,M\delta)$ and its complement, we obtain
\begin{align*}
R_D^{(0)}(y)
&\le
\omega^y(\Sigma(p,M\delta))
\frac{1}{2\pi}\log((M+1)\delta(y)) \\
&\quad+
\bigl(1-\omega^y(\Sigma(p,M\delta))\bigr)
\frac{1}{2\pi}\log\operatorname{diam}(D).
\end{align*}
By Lemma~\ref{lem:harmonic-measure-nondegenerate},
\[
\omega^y(\Sigma(p,M\delta))\ge c_0
\]
for all $y\in D$ with
$\delta(y)<\min\{\eta,\operatorname{diam}(D)/(M+1)\}$. Since
\[
\frac{1}{2\pi}\log((M+1)\delta(y))
\le
\frac{1}{2\pi}\log\operatorname{diam}(D)
\]
for all such $y$, we may replace $\omega^y(\Sigma(p,M\delta))$ by the lower bound $c_0$ in the preceding estimate and obtain
\[
R_D^{(0)}(y)\le \frac{1}{2\pi}\Bigl(c_0\log((M+1)\delta(y))+(1-c_0)\log\operatorname{diam}(D)\Bigr),
\]
and the conclusion follows since $\delta(y)\to0$ implies $\log\delta(y)\to-\infty$.
\end{proof}

\begin{lemma}[Comparison of diagonal regular parts]
\label{lem:diag-regular-part-comparison}
Let $G_D(\cdot, y, k_0)$ be the Dirichlet Green function of $-\Delta-k_0^2$ in $D$. Define
\[
R_D^{(k_0)}(y):=\lim_{x\to y}\bigl(G_D(x,y,k_0)-\Phi_{k_0}(x,y)\bigr),
\quad y\in D.
\]
Then $R_D^{(k_0)}(y)-R_D^{(0)}(y)$ remains bounded as $y\to\partial D$, $y\in D$.
\end{lemma}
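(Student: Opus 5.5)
The plan is to write the difference of regular parts as the value at $x=y$ of a function that solves a Dirichlet problem in $x$ with boundary data bounded uniformly in $y$. Then a uniform a priori estimate, valid because $k_0^2\notin\sigma(D)$, bounds that value. Fix $y\in D$ and, for $x\in D\setminus\{y\}$, set
\[
w_y(x):=\bigl(G_D(x,y,k_0)-\Phi_{k_0}(x,y)\bigr)-\Bigl(G_D(x,y,0)+\frac{1}{2\pi}\log|x-y|\Bigr).
\]
Then $R_D^{(k_0)}(y)-R_D^{(0)}(y)=w_y(y)$, where the singularity at $x=y$ has been removed as in Lemma~\ref{lem:diagonal} and Lemma~\ref{lem:harmonic-robin-boundary}.

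First I would decompose $\Phi_{k_0}$ through the small-argument expansion of $H_0^{(1)}$:
\[
\Phi_{k_0}(x,y)=-\frac{1}{2\pi}\log|x-y|+c_{k_0}+S(|x-y|),
\]
where $c_{k_0}$ is a constant and $S(r)=O(r^2|\log r|)$ is an entire-in-$r^2$ correction including $r^2\log r$ terms. On the bounded set $\{r\le\operatorname{diam}D\}$ the function $\Psi(x,y):=\Phi_{k_0}(x,y)+\frac{1}{2\pi}\log|x-y|$ is therefore continuous and bounded, with $|\Psi|\le C(k_0,\operatorname{diam}D)$. Next, set $g_y:=G_D(\cdot,y,k_0)-G_D(\cdot,y,0)$. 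Distributionally,
\[
(-\Delta-k_0^2)g_y=k_0^2G_D(\cdot,y,0)\quad\text{in }D,\qquad g_y=0\ \text{on }\partial D.
\]
Hence $g_y=k_0^2(A-k_0^2)^{-1}G_D(\cdot,y,0)$, and $w_y=g_y-\Psi(\cdot,y)$. It therefore suffices to bound $g_y(y)$ uniformly in $y$.

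To bound $g_y(y)$, I would use the resolvent identity $(A-k_0^2)^{-1}=A^{-1}+k_0^2A^{-1}(A-k_0^2)^{-1}A^{-1}$. This gives
\[
g_y=k_0^2A^{-1}h_y+k_0^4A^{-1}(A-k_0^2)^{-1}A^{-1}h_y,\qquad h_y:=G_D(\cdot,y,0).
\]
Domain monotonicity gives $0\le G_D(x,y,0)\le\frac{1}{2\pi}\log\frac{\operatorname{diam}D}{|x-y|}$, so $\|h_y\|_{L^2(D)}$ is bounded uniformly in $y$. The operator $A^{-1}=\int_0^\infty e^{-tA}\,dt$ maps $L^2(D)$ boundedly into $L^\infty(D)$ by Lemma~\ref{lem:heat-kernel-boundary}(ii), since $\int_0^1t^{-1/2}\,dt+\int_1^\infty e^{-\lambda_1t}\,dt<\infty$. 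It maps into $C_\partial(\overline D)$ by Lemma~\ref{lem:heat-kernel-boundary}(iii). Because $(A-k_0^2)^{-1}A^{-1}$ is bounded on $L^2$, both terms of $g_y$ are continuous functions whose sup-norm is bounded by $C\|h_y\|_{L^2}$, uniformly in $y$. Continuity of $g_y$ near $x=y$ is legitimate: $k_0^2A^{-1}h_y$ is $A^{-1}$ applied to an $L^2$ function, so pointwise evaluation at $x=y$ makes sense. This yields $|w_y(y)|\le|g_y(y)|+|\Psi(y,y)|\le C$ for every $y\in D$, which is stronger than required.

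I expect the main obstacle to be justifying the identification of $g_y$ with the resolvent expression and evaluating it at $x=y$. Both $G_D(\cdot,y,k_0)$ and $G_D(\cdot,y,0)$ are singular at $y$, although their difference is not. Concretely, I must check that $h_y\in L^2(D)$ and that $g_y\in\mathcal D(A)$, so that $g_y$ coincides with the continuous representative $k_0^2(A-k_0^2)^{-1}h_y$. I would handle this with the cutoff device $v_\chi$ from the proof of Lemma~\ref{lem:diagonal}, applied to both Green functions. Subtracting the two $v_\chi$'s shows that $g_y\in H^1_0(D)$ and that $g_y$ satisfies the equation above in $L^2$. The leftover terms are the bounded function $\chi\Psi$ and its commutator, which are smooth away from $y$ and bounded.
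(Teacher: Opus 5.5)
Your proof is correct. It shares the paper's skeleton: the logarithmic bound giving $\sup_{y}\|G_D(\cdot,y,0)\|_{L^2(D)}<\infty$, boundedness of $(A-k_0^2)^{-1}:L^2(D)\to C_\partial(\overline D)$ via the heat semigroup, and the final formula $R_D^{(k_0)}(y)-R_D^{(0)}(y)=k_0^2\bigl((A-k_0^2)^{-1}G_D(\cdot,y,0)\bigr)(y)-c_{k_0}$. Where you differ is the crux, namely the rigorous identification of your $g_y=G_D(\cdot,y,k_0)-G_D(\cdot,y,0)$ with that resolvent image.

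\emph{The paper's route.} It works at the kernel level. It pairs $F(x,y)=k_0^2\bigl((A-k_0^2)^{-1}G_D(\cdot,y,0)\bigr)(x)$ against $f\otimes\varphi$ and uses the second resolvent identity and density of tensor products to get equality with $G_D(\cdot,\cdot,k_0)-G_D(\cdot,\cdot,0)$ almost everywhere on $D\times D$. To upgrade this to pointwise equality off the diagonal it must prove joint continuity $F\in C(\overline D\times D)$. That is why it needs the extra $L^2$-continuity of $y\mapsto G_D(\cdot,y,0)$ in Step~1.

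\emph{Your route.} You work fiberwise. For fixed $y$ you show $g_y\in\mathcal D(A)$ with $(A-k_0^2)g_y=k_0^2G_D(\cdot,y,0)$ in $L^2(D)$. After that, continuity in $x$ alone suffices, and no regularity in $y$ is needed. This is exactly the rigorous version of the formal identity the paper mentions only parenthetically.

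The cost is that you must supply the $k=0$ analogue of the cutoff argument of Lemma~\ref{lem:diagonal}, which is proved there only for $\Re k>0$. With $v_\chi^{(k)}:=G_D(\cdot,y,k)-\chi\Phi_k(\cdot,y)$, $\Phi_0:=-\frac{1}{2\pi}\log|\cdot-y|$ and $f_k:=-[-\Delta,\chi]\Phi_k(\cdot,y)$, the bookkeeping goes as follows:
\begin{itemize}
\item $-\Delta\Psi=k_0^2\Phi_{k_0}$ holds distributionally across $x=y$, since the two delta masses cancel, so $\chi\Psi\in\mathcal D(A)$.
\item The commutator terms cancel exactly, because $f_{k_0}-f_0=-[-\Delta,\chi]\Psi$.
\item The term $k_0^2\chi\Phi_0$ coming from $\chi(-\Delta-k_0^2)\Psi$ recombines with $k_0^2v_\chi^{(0)}$ into $k_0^2G_D(\cdot,y,0)$.
\end{itemize}
Your description of the leftover terms as only $\chi\Psi$ and its commutator skips that last term, but the computation closes. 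In exchange, the paper's route avoids the second cutoff analysis and gives continuity of $y\mapsto R_D^{(k_0)}(y)-R_D^{(0)}(y)$ for free.

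One slip to fix: the identity $(A-k_0^2)^{-1}=A^{-1}+k_0^2A^{-1}(A-k_0^2)^{-1}A^{-1}$ is false. On an eigenvalue $\lambda$ its right-hand side equals $(\lambda^2-k_0^2\lambda+k_0^2)/\bigl(\lambda^2(\lambda-k_0^2)\bigr)$. The correct identity is $(A-k_0^2)^{-1}=A^{-1}+k_0^2A^{-1}(A-k_0^2)^{-1}$, or equivalently the paper's factorization $(A-k_0^2)^{-1}=A^{-1}(I-k_0^2A^{-1})^{-1}$. Every term is still $A^{-1}$ applied to an $L^2$ function of norm at most $C\|h_y\|_{L^2(D)}$, so nothing downstream changes.

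Also, $A^{-1}f\in C_\partial(\overline D)$ requires $\int_0^\infty e^{-tA}f\,dt$ to converge in the sup norm, which uses continuity of $t\mapsto e^{-tA}f$ into $C_\partial(\overline D)$. It is not enough that each $e^{-tA}f$ lies in $C_\partial(\overline D)$; the paper checks this explicitly.
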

\begin{proof}
Write $G_0(x,y):=G_D(x,y,0)$ and $g_y:=G_0(\cdot,y)$, and set
$\mathcal R_{k_0}:=(A-k_0^2)^{-1}$, $\mathcal R_0:=A^{-1}$.

\emph{Step~1: A two-sided bound for $G_0$ and its consequences.}
The positivity-preserving property of $A^{-1}$ gives $G_0\ge0$ almost
everywhere on $D\times D$. Since $G_0$ is smooth off the diagonal by
Theorem~\ref{theo:spec-expansion}, the bound holds pointwise for
$x\neq y$. For the upper bound, set
\[
u_y(x):=\frac{1}{2\pi}\log\frac{\operatorname{diam}(D)}{|x-y|}-G_0(x,y),
\quad x\in \overline{D}\setminus\{y\}.
\]
The logarithmic singularities cancel at $x=y$, so $u_y$ extends
harmonically to $D$. Every boundary point of the Lipschitz domain $D$
is Wiener regular, hence $G_0(\cdot,y)$ vanishes continuously on
$\partial D$. Since $|x-y|\le\operatorname{diam}(D)$ for $x\in\partial D$, $u_y\ge0$
on $\partial D$, and the maximum principle gives $u_y\ge0$ in $D$.
Combined with the lower bound,
\begin{equation}\label{eq:G0-two-sided}
0\le G_0(x,y)\le\frac{1}{2\pi}\log\frac{\operatorname{diam}(D)}{|x-y|},
\quad x,y\in D,\ x\neq y.
\end{equation}
In particular $g_y\in L^2(D)$ with
$\sup_{y\in D}\|g_y\|_{L^2(D)}<\infty$, and $y\mapsto g_y$ is
continuous from $D$ into $L^2(D)$: for $y'$ close to $y$ and small
$\rho>0$, the integral of $|g_{y'}-g_y|^2$ over $B(y,2\rho)$ is
$O\bigl(\rho^2(1+|\log\rho|^2)\bigr)$ uniformly in $y'$ by
\eqref{eq:G0-two-sided}, while over $D\setminus B(y,2\rho)$ it tends
to zero by dominated convergence, using the pointwise convergence
$g_{y'}(x)\to g_y(x)$ and the $y'$-uniform majorant
$\frac1\pi\log\bigl(2\operatorname{diam}(D)/\rho\bigr)\in L^2(D)$.

\emph{Step~2: $\mathcal R_{k_0}:L^2(D)\to C_\partial(\overline D)$ is
bounded.} By Lemma~\ref{lem:heat-kernel-boundary}(ii)--(iii), $e^{-tA}$
maps $L^2(D)$ into $C_\partial(\overline D)$ with
$\|e^{-tA}f\|_{L^\infty(D)}\le C(t)\|f\|_{L^2(D)}$, where
$C\in L^1(0,\infty)$ and
$\int_0^\infty C(t)\,dt=2C_1+C_2e^{-\lambda_1}/\lambda_1$. Moreover
$t\mapsto e^{-tA}f$ is continuous from $(0,\infty)$ into
$C_\partial(\overline D)$, as one sees by writing
$e^{-tA}f=e^{-sA}\bigl(e^{-(t-s)A}f\bigr)$ with $0<s<t$. Hence
$\int_0^\infty e^{-tA}f\,dt$ converges as a Bochner integral in
$C_\partial(\overline D)$. By the spectral theorem it equals $A^{-1}f$
in $L^2(D)$, and the two limits coincide since
$C_\partial(\overline D)\hookrightarrow L^2(D)$. This gives
\begin{equation}\label{eq:inverse-laplacian-bound}
\|A^{-1}f\|_{L^\infty(D)}
\le\Bigl(2C_1+\frac{C_2e^{-\lambda_1}}{\lambda_1}\Bigr)\|f\|_{L^2(D)},
\quad f\in L^2(D).
\end{equation}
Since $k_0^2\notin\sigma(D)$, the compact self-adjoint operator
$k_0^2A^{-1}$ does not have $1$ in its spectrum; thus $I-k_0^2A^{-1}$
is invertible on $L^2(D)$, and the factorization
$\mathcal R_{k_0}=A^{-1}(I-k_0^2A^{-1})^{-1}$ shows that
$\mathcal R_{k_0}:L^2(D)\to C_\partial(\overline D)$ is bounded.
Denote its norm by $C_{\mathrm{res}}$.

\emph{Step~3: The difference $w_y:=G_D(\cdot,y,k_0)-g_y$ as a
resolvent image.} The singularities of the two Green functions at
$x=y$ cancel: by \eqref{eq:GD-decomposition},
$G_D(\cdot,y,k_0)=\Phi_{k_0}(\cdot,y)+u^s(\cdot,y,k_0)$ with
$u^s(\cdot,y,k_0)\in C^\infty(D)$, while near $y$,
$g_y=\Phi_0(\cdot,y)+H_y$ with $\Phi_0(x,y):=-\frac1{2\pi}\log|x-y|$
and $H_y$ harmonic; the small-argument expansion of the Hankel
function,
\begin{equation}\label{eq:hankel-expansion}
\Phi_{k_0}(x,y)-\Phi_0(x,y)=c_{k_0}+O\bigl(|x-y|^2\log|x-y|\bigr)
\quad\text{as }x\to y,
\end{equation}
with
\[
c_{k_0}:=-\frac{1}{2\pi}\Bigl(\log\frac{k_0}{2}+\gamma\Bigr)+\frac{i}{4},
\]
then shows that $w_y$ extends continuously across $x=y$.
(Subtracting the distributional identities
$(-\Delta_x-k_0^2)G_D(\cdot,y,k_0)=\delta_y$ and
$-\Delta_xg_y=\delta_y$ formally gives
$(-\Delta_x-k_0^2)w_y=k_0^2g_y$; the representation we now establish
is the rigorous kernel-level form of this identity.)

Put $F(x,y):=k_0^2(\mathcal R_{k_0}g_y)(x)$ for $x\in\overline D$,
$y\in D$. By Steps~1--2,
\begin{equation}\label{eq:F-uniform-bound1}
\|F(\cdot,y)\|_{L^\infty(D)}
\le k_0^2C_{\mathrm{res}}\sup_{y\in D}\|g_y\|_{L^2(D)}<\infty,
\end{equation}
and
\begin{equation}\label{eq:F-uniform-bound2}
\|F(\cdot,y)-F(\cdot,y')\|_{L^\infty(D)}
\le k_0^2C_{\mathrm{res}}\|g_y-g_{y'}\|_{L^2(D)}\to0\ (y'\to y),
\end{equation}
so $F\in C(\overline D\times D)$ by the triangle inequality. For
$f,\varphi\in C_c^\infty(D)$, Fubini's theorem and the second
resolvent identity
$\mathcal R_{k_0}-\mathcal R_0=k_0^2\mathcal R_{k_0}\mathcal R_0$ give
$$
\iint_{D\times D}F(x,y)f(y)\varphi(x)\,dy\,dx
=k_0^2\langle\mathcal R_{k_0}\mathcal R_0f,\varphi\rangle
=\langle(\mathcal R_{k_0}-\mathcal R_0)f,\varphi\rangle.
$$
The locally integrable kernel $G_D(\cdot,\cdot,k_0)-G_0$ of
$\mathcal R_{k_0}-\mathcal R_0$ satisfies the same pairing identity,
and finite sums $f\otimes\varphi$ are dense in
$C_c^\infty(D\times D)$; hence the two functions agree almost
everywhere on $D\times D$. Both are continuous off the diagonal ($F$ on all of $\overline D\times D$, the kernel by
Theorem~\ref{theo:spec-expansion}) so
\begin{equation}\label{eq:wy-representation}
w_y(x)=F(x,y)=k_0^2(\mathcal R_{k_0}g_y)(x),
\quad x,y\in D,\ x\neq y.
\end{equation}

\emph{Step~4: Evaluation on the diagonal.} For fixed $y\in D$, both
sides of \eqref{eq:wy-representation} are continuous at $x=y$ (the
left-hand side by Step~3, the right-hand side since
$F\in C(\overline D\times D)$). Letting $x\to y$ gives
$w_y(y)=F(y,y)$. Using \eqref{eq:hankel-expansion},
$$
R_D^{(k_0)}(y)-R_D^{(0)}(y)=w_y(y)-c_{k_0}=F(y,y)-c_{k_0},
$$
and \eqref{eq:F-uniform-bound1}-\eqref{eq:F-uniform-bound2} yields
$$
\sup_{y\in D}\bigl|R_D^{(k_0)}(y)-R_D^{(0)}(y)\bigr|
\le k_0^2C_{\mathrm{res}}\sup_{y\in D}\|g_y\|_{L^2(D)}+|c_{k_0}|<\infty,
$$
which proves the lemma.
\end{proof}

\begin{lemma}[Boundary blow-up of the diagonal regular part]
\label{lem:diag-regular-part-blowup}
Let $k_0>0$ satisfy $k_0^2\notin \sigma(D)$.
Then
\[
\Re\,R_D^{(k_0)}(y)\to -\infty
\quad\text{as } y\to\partial D,\ y\in D.
\]
Equivalently,
\[
\Re\,u^s(y,y,k_0)\to -\infty
\quad\text{as } y\to\partial D,\ y\in D.
\]
\end{lemma}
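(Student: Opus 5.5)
The plan is to combine the two preceding lemmas directly. By Lemma~\ref{lem:diag-regular-part-comparison}, there is a constant $C<\infty$ such that
\[
\bigl|R_D^{(k_0)}(y)-R_D^{(0)}(y)\bigr|\le C
\]
for all $y\in D$. The proof of that lemma in fact gives a uniform bound on all of $D$, not only near $\partial D$. Since $R_D^{(0)}$ is real-valued (it is built from the real Green function $G_D(\cdot,\cdot,0)$ and a real logarithm), we get
\[
\Re R_D^{(k_0)}(y)\le R_D^{(0)}(y)+\bigl|R_D^{(k_0)}(y)-R_D^{(0)}(y)\bigr|\le R_D^{(0)}(y)+C .
\]
Lemma~\ref{lem:harmonic-robin-boundary} gives $R_D^{(0)}(y)\to-\infty$ as $y\to\partial D$, so $\Re R_D^{(k_0)}(y)\to-\infty$ as well. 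This is the first assertion.

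For the equivalent formulation I would identify $R_D^{(k_0)}(y)$ with $u^s(y,y,k_0)$. By \eqref{eq:GD-decomposition}, $G_D(x,y,k_0)-\Phi_{k_0}(x,y)=u^s(x,y,k_0)$ for $x\neq y$. By Step~1 of the proof of Lemma~\ref{lem:diagonal}, $u^s(\cdot,\cdot,k_0)$ is jointly continuous, indeed real-analytic, on $D\times D$. Letting $x\to y$ therefore gives $R_D^{(k_0)}(y)=u^s(y,y,k_0)$ for every $y\in D$, and the two statements coincide.

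There is no real obstacle here; the substantive work lies in the two prior lemmas. The only points to check are the following.
\begin{itemize}
\item The limit defining $R_D^{(k_0)}$ exists. This is exactly the continuity used above.
\item The bound in Lemma~\ref{lem:diag-regular-part-comparison} controls the complex modulus, and hence the real part. This holds because $\Re z\le \Re w+|z-w|$ for complex $z$ and real $w$.
\item The constant $c_{k_0}$ has nonzero imaginary part $i/4$. This does not matter, since it is absorbed into the bounded difference.
\end{itemize}
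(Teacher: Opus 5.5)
Your proposal is correct and follows essentially the same route as the paper: identify $R_D^{(k_0)}(y)=u^s(y,y,k_0)$ via Lemma~\ref{lem:diagonal}, control the real part using the bounded difference from Lemma~\ref{lem:diag-regular-part-comparison} together with the realness of $R_D^{(0)}$, and conclude from Lemma~\ref{lem:harmonic-robin-boundary}. Your inequality $\Re R_D^{(k_0)}\le R_D^{(0)}+C$ simply writes out the paper's ``taking real parts'' step explicitly.
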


\begin{proof}
By Lemma~\ref{lem:diagonal}, the diagonal value $u^s(y,y,k_0)$ is well defined and satisfies
\[
R_D^{(k_0)}(y)=u^s(y,y,k_0),\quad y\in D.
\]
Moreover, Lemma~\ref{lem:diag-regular-part-comparison} shows that
$R_D^{(k_0)}(y)-R_D^{(0)}(y)$ remains bounded as $y\to\partial D$. Since $R_D^{(0)}$ is real-valued, taking real parts gives
\[
\Re\,R_D^{(k_0)}(y)-R_D^{(0)}(y)=\Re\bigl(R_D^{(k_0)}(y)-R_D^{(0)}(y)\bigr),
\]
and the right-hand side remains bounded as $y\to\partial D$. On the other hand, Lemma~\ref{lem:harmonic-robin-boundary} gives
\[
R_D^{(0)}(y)\to -\infty \quad\text{as } y\to\partial D,\ y\in D.
\]
Therefore
\[
\Re\,R_D^{(k_0)}(y)\to -\infty
\quad\text{as } y\to\partial D,\ y\in D,
\]
and the equivalent statement for $u^s(y,y,k_0)$ follows.
\end{proof}

The following topological lemma describes what happens when two domains are not identical but share a common intersection component.

\begin{lemma}[Boundary crossing for unequal domains]
\label{lem:boundary-crossing}
Let $D_1,D_2\subset\mathbb R^2$ be two bounded connected Lipschitz domains, and let $D^\ast$ be
a connected component of $D_1\cap D_2$. If $D_1\neq D_2$, then there exist
a point
\[
p\in(\partial D_1\cap D_2)\cup(\partial D_2\cap D_1)
\]
and a sequence $(y_n)\subset D^\ast$ with $y_n\to p$.
\end{lemma}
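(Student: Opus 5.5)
We argue by contradiction on the boundary of $D^\ast$. Since $D^\ast$ is a nonempty bounded open set, $\partial D^\ast\neq\varnothing$. Every point $p\in\partial D^\ast$ is a limit of a sequence $(y_n)\subset D^\ast$, so it suffices to find one $p\in\partial D^\ast$ lying in $(\partial D_1\cap D_2)\cup(\partial D_2\cap D_1)$.

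The first step is to show $\partial D^\ast\subset\partial D_1\cup\partial D_2$. Take $p\in\partial D^\ast$ and suppose $p\in D_1\cap D_2$. Then a small disk $B(p,r)\subset D_1\cap D_2$ is connected and meets $D^\ast$. Hence $B(p,r)\cup D^\ast$ is a connected subset of $D_1\cap D_2$. Maximality of the component gives $B(p,r)\subset D^\ast$, contradicting $p\in\partial D^\ast$ (recall $D^\ast$ is open). Thus $p\notin D_1\cap D_2$. Since $p\in\overline{D^\ast}\subset\overline{D_1}\cap\overline{D_2}$, it follows that $p\in\partial D_1\cup\partial D_2$.

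The second step is a case split. If some $p\in\partial D^\ast$ lies in $D_2$, then $p\in\partial D_1\cap D_2$ by the first step, and we are done; symmetrically if $p\in D_1$. Otherwise $\partial D^\ast\subset\partial D_1\cap\partial D_2$. In that case we claim $D^\ast=D_1$. Indeed, $D^\ast\subset D_1$ is open, and $D^\ast$ is relatively closed in $D_1$, because $\overline{D^\ast}\cap D_1\subset D^\ast\cup(\partial D^\ast\cap D_1)=D^\ast$, using $\partial D^\ast\cap D_1\subset\partial D_1\cap D_1=\varnothing$. Since $D_1$ is connected and $D^\ast\neq\varnothing$, we get $D^\ast=D_1$. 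The same argument gives $D^\ast=D_2$, so $D_1=D_2$, contradicting the hypothesis.

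The only step needing care is the first one, that is, the maximality argument showing boundary points of a component of an open set are not in the open set. It is elementary, and I expect no real obstacle. Lipschitz regularity is not needed here beyond openness, boundedness and connectedness.
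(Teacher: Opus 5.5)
Your proof is correct, but it follows a different route from the paper's.

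\textbf{The paper's route.} The paper assumes, after relabeling, that $D_1\setminus D_2\neq\varnothing$. It joins a point of $D^\ast$ to a point of $D_1\setminus D_2$ by a path $\gamma$ in $D_1$ and takes the first exit time $t^\ast$ of $\gamma$ from $D^\ast$. The point $p=\gamma(t^\ast)$ then lies in $\partial D^\ast\cap D_1$, and the points $y_n=\gamma(t_n)$ with $t_n\nearrow t^\ast$ converge to it. Finally, the same component-maximality fact you prove in your first step excludes $p\in D_2$, which gives $p\in\partial D_2\cap D_1$.

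\textbf{Your route.} You first prove the inclusion $\partial D^\ast\subset\partial D_1\cup\partial D_2$. You then replace the path argument by a clopen argument: if $\partial D^\ast\cap D_1=\varnothing$, then $D^\ast$ is relatively closed in $D_1$, so $D^\ast=D_1$.

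\textbf{Comparison.} Your version uses only connectedness rather than path-connectedness. For open subsets of $\mathbb R^2$ these coincide, so there is no real gain in generality. It does avoid having to check that the exit time $t^\ast$ is attained, a point the paper leaves implicit.

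Your argument also recovers the paper's more targeted conclusion if you run it one-sidedly. If $D_1\not\subset D_2$, then $\partial D^\ast\cap D_1\neq\varnothing$. Any such point lies in $\partial D_2\cap D_1$ by your first step.

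One small remark: your preliminary claim that $\partial D^\ast\neq\varnothing$ is true but unnecessary. Your final case already covers an empty boundary vacuously.
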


\begin{proof}
If $D_1\subset D_2$ and $D_2\subset D_1$, then $D_1=D_2$; hence, after
relabeling, $D_1\setminus D_2\neq\varnothing$. Since $D_1$ is a connected
open subset of $\mathbb R^2$ and hence path connected, and since
$D^\ast$ is open and nonempty, we may choose a continuous path
$\gamma:[0,1]\to D_1$ with $\gamma(0)\in D^\ast$ and
$\gamma(1)\in D_1\setminus D_2$. Set
\[
t^\ast:=\inf\{t\in[0,1]:\gamma(t)\notin D^\ast\}.
\]
The set $\{t\in[0,1]:\gamma(t)\notin D^\ast\}$ is nonempty, since it
contains $1$, and it does not contain $0$, so $0<t^\ast\le 1$. Then
$p:=\gamma(t^\ast)\in\partial D^\ast\cap D_1$, and by definition of
$t^\ast$ there are $t_n\nearrow t^\ast$ with $\gamma(t_n)\in D^\ast$; set
$y_n:=\gamma(t_n)$. Since $D^\ast\subset D_1\cap D_2\subset D_2$, we have
$y_n\in D_2$ for every $n$, and by continuity of $\gamma$,
$p=\lim_n y_n\in\overline{D_2}$. If $p\in D_2$, then $p\in D_1\cap D_2$,
and because $D_1\cap D_2$ is open, a small ball centered at $p$ would lie
in $D_1\cap D_2$ and meet $D^\ast$, forcing $p\in D^\ast$, a
contradiction. Combining $p\in\overline{D_2}$ with $p\notin D_2$, we
obtain $p\in\partial D_2$; together with $p\in D_1$, this gives
$p\in\partial D_2\cap D_1$. The symmetric case yields a point in
$\partial D_1\cap D_2$.
\end{proof}

\begin{theorem}[Single-frequency open-set uniqueness]
\label{thm:openset-single-frequency}
Let $D_1,D_2\subset\mathbb R^2$ be two bounded connected Lipschitz domains. Let $\mathcal U \subset D_1 \cap D_2$ be a nonempty open set, and fix $k_0>0$ with
$k_0^2\notin \sigma(D_1)\cup\sigma(D_2)$.
Assume that
\[
u_1^s(y,y,k_0)=u_2^s(y,y,k_0), \quad y\in\mathcal U.
\]
Then $D_1=D_2$.
\end{theorem}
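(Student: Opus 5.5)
We argue by contradiction, following the three-step outline of the introduction. Suppose $D_1\neq D_2$, and let $D^\ast$ be the connected component of $D_1\cap D_2$ that contains some nonempty open subset of $\mathcal U$. If $\mathcal U$ meets several components, we pick one; shrinking $\mathcal U$ to a ball inside $D^\ast$ costs nothing. For $i=1,2$, Lemma~\ref{lem:diagonal} shows that $y\mapsto u_i^s(y,y,k_0)=R_{D_i}^{(k_0)}(y)$ is real-analytic in $D_i$, hence in $D^\ast$. The difference $h(y):=u_1^s(y,y,k_0)-u_2^s(y,y,k_0)$ is then real-analytic on the connected open set $D^\ast$ and vanishes on a nonempty open subset. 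By the identity theorem for real-analytic functions, $h\equiv0$ on $D^\ast$.

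Next, Lemma~\ref{lem:boundary-crossing} applies to the assumption $D_1\neq D_2$. It yields a point $p\in(\partial D_1\cap D_2)\cup(\partial D_2\cap D_1)$ and a sequence $(y_n)\subset D^\ast$ with $y_n\to p$. By symmetry we may assume $p\in\partial D_1\cap D_2$. Lemma~\ref{lem:diag-regular-part-blowup} for $D_1$ then gives $\Re\,u_1^s(y_n,y_n,k_0)\to-\infty$.

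On the other hand, $p$ is an interior point of $D_2$, and $y\mapsto u_2^s(y,y,k_0)$ is real-analytic, hence continuous, in $D_2$. Therefore $u_2^s(y_n,y_n,k_0)\to u_2^s(p,p,k_0)$, which is finite. This contradicts $h(y_n)=0$ for all $n$, so $D_1=D_2$.

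The main obstacle is not in this assembly, which is short once the earlier lemmas are available. It is the boundary blow-up of Lemma~\ref{lem:diag-regular-part-blowup}, which is already established. The only points requiring care here are the following.
\begin{itemize}
\item The identity theorem must be applied on a \emph{connected} set. This is why we work on the component $D^\ast$ rather than on all of $D_1\cap D_2$.
\item The blow-up in Lemma~\ref{lem:diag-regular-part-blowup} holds along \emph{every} approach to $\partial D_1$ from inside $D_1$. The sequence $(y_n)$ lies in $D^\ast\subset D_1$, so this applies to it.
\item The nonresonance condition $k_0^2\notin\sigma(D_1)\cup\sigma(D_2)$ is needed for both forward problems and for the regular parts $R_{D_i}^{(k_0)}$ to be defined.
\end{itemize}
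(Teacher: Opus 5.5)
Your proposal is correct and matches the paper's proof step for step. It continues the diagonal identity analytically from a connected open piece of $\mathcal U$ to the component $D^\ast$, and uses Lemma~\ref{lem:boundary-crossing} to obtain a point $p\in\partial D_1\cap D_2$ approached from $D^\ast$. The contradiction then comes, as in the paper, from the blow-up of $\Re\,u_1^s(y,y,k_0)$ in Lemma~\ref{lem:diag-regular-part-blowup} against the continuity of $u_2^s(y,y,k_0)$ at the interior point $p$ of $D_2$.
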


\begin{proof}
By Lemma~\ref{lem:diagonal}, for $i=1,2$ the backscattering field
$u_i^s(y,y,k_0)$ is real-analytic with respect to $y\in D_i$. Hence the difference
$u_1^s(y,y,k_0)-u_2^s(y,y,k_0)$ is real-analytic on $D_1\cap D_2$. Choose a nonempty connected open subset $\mathcal U_0\Subset \mathcal U$, and let $D^\ast$ be the connected component of $D_1\cap D_2$ containing $\mathcal U_0$. Since this difference vanishes identically on $\mathcal U_0$, the identity theorem for real-analytic functions implies that
\[
u_1^s(y,y,k_0)=u_2^s(y,y,k_0), \quad y\in D^\ast.
\]

Assume for contradiction that $D_1\neq D_2$. By Lemma~\ref{lem:boundary-crossing}, applied
with the connected component $D^\ast$ of $D_1\cap D_2$ containing
$\mathcal U_0$, there exist a point
\[
p\in (\partial D_1\cap D_2)\cup(\partial D_2\cap D_1)
\]
and a sequence \((y_n)\subset D^\ast\) with \(y_n\to p\).
By symmetry, it suffices to consider the case
\[
p\in \partial D_1\cap D_2.
\]
Since $p\in D_2$, the function $y\mapsto u_2^s(y,y,k_0)$ extends continuously to $p$, and
\[
u_2^s(y,y,k_0)\to u_2^s(p,p,k_0)
\quad\text{as } y\to p,\ y\in D^\ast.
\]
In particular,
\[
\Re\,u_2^s(y,y,k_0)\to \Re\,u_2^s(p,p,k_0)
\quad\text{as } y\to p,\ y\in D^\ast.
\]
On the other hand, Lemma~\ref{lem:diag-regular-part-blowup} gives
\[
\Re\,u_1^s(y,y,k_0)\to -\infty
\quad\text{as } y\to p,\ y\in D^\ast,
\]
because $p\in \partial D_1$. Since the two diagonal values agree on $D^\ast$, it follows that
\[
\Re\,u_2^s(y,y,k_0)\to -\infty
\quad\text{as } y\to p,\ y\in D^\ast,
\]
which contradicts the finite limit $\Re\,u_2^s(p,p,k_0)$. Consequently, \(D_1=D_2\). The proof is complete.
\end{proof}

\section{Discussion and Open Problems} \label{sec:discussion}

We discuss the main limitation of the present approach, its relation to inverse boundary spectral problems, and the open problems left by this work.

\subsection{The multiplicity gap}
As already recorded in Remark~\ref{rmk:simplicity}, the hypotheses in
Theorem~\ref{thm:main-unique}\textup{(b1)} and \textup{(b2)} are conditional. Neither equality of visible
multiplicities nor equality of restriction ranks is presently known to be
inferable from the backscattering data.

To make this obstruction precise, recall two facts. First, by
Definition~\ref{def:visible} and Lemma~\ref{lem:invisible-interior}, the
restriction map \(R_\lambda:E_\lambda(D)\to C(\M)\) splits the
\(\lambda\)-eigenspace into a \emph{visible part} and a \emph{vanishing
part}: its kernel is the subspace of eigenfunctions vanishing on \(\M\),
and its rank counts the independent eigenfunctions that do not. Second,
by Lemma~\ref{lem:visible-spectrum}, the backscattering data determine
only the diagonal values \(P_\lambda(y,y)\) on \(\M\).
These diagonal values do not determine
\(\operatorname{rank}R_\lambda\) for an arbitrary positive semidefinite
kernel, and they do not directly provide the multiplicity \(m_\lambda\).
Whether the PDE constraint changes this is the open question formulated
below.

This obstruction arises already in finite dimensions. For a positive
semidefinite \(m\times m\) matrix \(Q\), the vector of diagonal entries
\((Q_{11},\dots,Q_{mm})\) does not determine \(\operatorname{rank}Q\):
the same diagonal can correspond to different ranks. For example,
\[
\begin{pmatrix}
1 & 0\\
0 & 1
\end{pmatrix}
\quad\text{and}\quad
\begin{pmatrix}
1 & 1\\
1 & 1
\end{pmatrix}
\]
have the same diagonal but ranks \(2\) and \(1\), respectively. The
spectral projection kernel \(P_\lambda(y,z)\) is a positive semidefinite
kernel on \(\M\times\M\), and our data consist only of
its diagonal values \(P_\lambda(y,y)\). The rank of the restricted
kernel is exactly \(\operatorname{rank}R_\lambda\), which cannot in
general be recovered from the diagonal alone.

Lemma~\ref{lem:invisible-interior} controls the total number of
eigenfunction directions invisible from \(\M\), including those
coming from partially visible eigenspaces, by \(N_\Omega(E)\). What
remains uncontrolled is the multiplicity carried by the visible
directions. Whether the PDE-constrained diagonal data force
multiplicity equality, and hence whether the area estimate in
Theorem~\ref{thm:main-unique}\textup{(b1)} and \textup{(b2)} holds without
any multiplicity assumption, remains open. Resolving this would require
either proving that equality of the diagonal values \(P_\lambda(y,y)\)
on \(\M\) forces equality of multiplicities, or introducing
additional data such as the full bistatic kernel \(P_\lambda(y,z)\) for
\(y,z\in\M\).

Two further questions are open. First, we know of no pair of distinct
domains for which the data-equality assumption of
Theorem~\ref{thm:main-unique} is known to hold, nor of any proof that
such a pair cannot exist. In particular, it is open whether the hypotheses of Theorem~\ref{thm:main-unique}\textup{(b1)} and \textup{(b2)} are ever met nontrivially. Second, we do not know whether the constant
$|\Omega|$ is sharp within the class of domain pairs satisfying the
data-equality assumption: equality fails for $D_1=D_2$, no nontrivial
example attaining or nearly attaining the bound is known, and the proof
uses only the counting inequality $|N_{D_1}(E)-N_{D_2}(E)|\le
N_\Omega(E)$. Any
improvement of the counting inequality would have to exploit geometric or
PDE information beyond the diagonal-to-rank comparison used here.

\subsection{Relation to inverse boundary spectral problem and potential theory}

The classical inverse boundary spectral problem clarifies the information missing here. In that setting, one considers the Dirichlet eigenvalues together with the boundary normal derivatives of Dirichlet eigenfunctions, modulo the usual basis ambiguity within each eigenspace. The boundary control method of Belishev \cite{Belishev1997} shows that such full boundary spectral data determine the manifold up to isometry; see also \cite{KKL2001}.

Our interior diagonal data are much weaker: on $\M$ we determine
only the diagonal of the spectral projection kernel, not the boundary
normal-derivative data of the classical problem.

The interior-data results of Bosi, Kurylev, and Lassas
\cite{BosiKurylevLassas2022} and of Kurylev, Lassas, Lu, and Yamaguchi
\cite{KurylevLassasLuYamaguchi2012} likewise use information on an open set,
but start from the full restricted spectral data or the full heat kernel; by
contrast, Theorem~\ref{thm:openset-single-frequency} uses only the Helmholtz
diagonal regular part on \(\mathcal U\), at a single non-resonant
frequency.

The classical potential theory of Robin functions and harmonic radii is
naturally compared with the open-set theorem, since the object involved
is again the diagonal regular part of the Green function. The Robin
function $R_D^{(0)}(y)=H_y(y)$, the trace of the regular part of the
Dirichlet Green function, and the associated harmonic (conformal) radius,
related here by $r_D(y)=\exp\bigl(2\pi R_D^{(0)}(y)\bigr)$ in the
normalization of this paper, are classical objects; see Duren and
Schiffer \cite{DurenSchiffer1991}, Bandle and Flucher
\cite{BandleFlucher1996}, Flucher \cite{Flucher1999}, Cardaliaguet and
Tahraoui \cite{CardaliaguetTahraoui2002}, Gustafsson and Sakai
\cite{GustafssonSakai1994}, and Sakai \cite{Sakai1982}. In particular,
the boundary divergence of the Robin function is a classical theme;
under the present Lipschitz assumptions it is established directly in
this paper. Lemma~\ref{lem:harmonic-robin-boundary} proves the
quantitative logarithmic upper estimate needed for the argument and, in
particular,
\[
R_D^{(0)}(y)\to-\infty\quad\text{as }y\to\partial D,\ y\in D.
\]
Lemma~\ref{lem:diag-regular-part-comparison} then transfers this
divergence to the shifted Helmholtz regular part, since
$R_D^{(k_0)}-R_D^{(0)}$ remains bounded up to the boundary. What the
classical literature does \emph{not} provide directly is the inverse
statement in the precise form required here, and we record the
differences as follows:
\begin{enumerate}
\item[(a)] \emph{Regularity.} The classical results on the Robin
function, the harmonic radius, and the harmonic center are formulated
for smooth (typically $C^2$, often convex or analytic) boundaries,
whereas we work on bounded Lipschitz domains and rely on
harmonic-measure estimates through the NTA machinery of Jerison and
Kenig \cite{JerisonKenig1982}.
\item[(b)] \emph{Frequency.} Our datum is the diagonal regular part of
the Green function of the \emph{shifted} Helmholtz operator
$-\Delta-k_0^2$ at a fixed nonzero frequency $k_0$, whereas the
classical Robin function corresponds to $k_0=0$.
\item[(c)] \emph{Data and conclusion.} We are given the diagonal values
only on a \emph{nonempty open subset} $\mathcal U$ of $D_1\cap D_2$, not
the full function on a domain. The analytic-continuation step of
Theorem~\ref{thm:openset-single-frequency} upgrades this local
information to a global statement, and the boundary blow-up then yields
$D_1=D_2$.
\end{enumerate}

Even complete Dirichlet spectral data, namely all eigenvalues with their multiplicities, do not determine a planar domain; see Gordon, Webb, and Wolpert \cite{GordonWebbWolpert1992}. This reflects a limitation of spectral information alone, rather than a nonuniqueness mechanism for the present curve-based diagonal data. 

\subsection{Open problems}
Based on the previous discussion, we propose the following open problems. These problems are not of merely theoretical interest: backscattering
measurement configurations are widely used in practice, and each item
below isolates a specific obstruction to a sharper uniqueness theory
for such data.

\begin{enumerate}
\item \emph{The multiplicity gap, nontrivial examples, and sharpness.}
As discussed above, it is open whether the diagonal data force equality
of multiplicities or of restriction ranks; whether any pair of distinct
domains with identical curve data exists at all; and whether the constant
$|\Omega|$ in the area bound is sharp.
\item \emph{Finite-frequency data.} Whether additional a priori structure could make finitely many frequencies suffice, replacing the accumulation-point requirement of the residue argument, is open.
\item \emph{Neumann and impedance boundary conditions.} The paper treats the
Dirichlet problem throughout. Since Qin and Colton \cite{QinColton2012acm}
study an impedance boundary condition in the interior cavity setting, it is
natural to ask whether the present mechanism survives, i.e., the boundary
blow-up mechanism of the single-frequency theorem would have to be rebuilt
for the corresponding Green function.
\item \emph{Stability.} The uniqueness results in both settings are
qualitative. In the curve-measurement setting, the continuation in the
frequency variable is used only as an exact-data determinacy device: it
requires agreement on a set of frequencies with an accumulation point, and
no stability of the analytic continuation is claimed. In the open-set setting, if
$u_1^s(\cdot,\cdot,k_0)-u_2^s(\cdot,\cdot,k_0)$ is small in
$L^\infty(\mathcal U)$, one expects the two domains to be close in a
Hausdorff-type sense, but no estimate follows from the arguments here, since
both the analytic-continuation step and the boundary-blow-up step are
inherently non-quantitative. Quantitative stability in either setting would
substantially strengthen the results and appears to require a different
approach.
\end{enumerate}

\section{Conclusion} \label{sec:conclusion}
This paper addresses a natural but largely unexplored question in interior
inverse scattering: what do coincident source-receiver data, measured
inside the unknown domain itself, actually determine? Backscattering measurements are among the most common configurations in practice, yet the corresponding
uniqueness theory lags far behind that for full multistatic data.

We treat this question in two complementary regimes, each tied to a
different measurement model.

In the curve-measurement setting, the meromorphic frequency dependence of
the backscattering field yields spectral information: the residues at the
poles recover the diagonal of the spectral projection kernel on the
measurement curve, hence the visible part of the Dirichlet spectrum. This
yields an unconditional comparison of the two Dirichlet spectra, with the
symmetric difference contained in the spectrum of the enclosed domain. An
area estimate, controlled by that same domain, holds under either matching
multiplicities or matching restriction ranks of the common visible
eigenvalues. 

In the open-set setting, a single non-resonant frequency identifies the
diagonal regular part of the Helmholtz Green function on the observation
set. Its interior real-analyticity propagates the identity throughout
the relevant intersection component, and its boundary blow-up forces the
two domains to coincide, yielding a scattering-theoretic uniqueness
theorem for bounded connected Lipschitz domains. 

The two regimes concern the same object, namely the
diagonal of the Green function in its spectral or pointwise form, but they
reveal different aspects of it: the curve setting extracts global spectral
information yet stops at the multiplicity gap, while the open-set setting
gives full uniqueness from a single frequency but requires a
two-dimensional observation set. Closing the gap between them remains
the central challenge for the information content of backscattering measurements.

\section*{Acknowledgments}
The research of X.~Liu is supported by the National Key R\&D Program of China under grant 2024YFA1012303 and by the National Natural Science Foundation of China under grant 12371430.

\end{document}